\definecolor{strcolor}{rgb}{0.6, 0.2, 0.6}
\definecolor{commentcolor}{rgb}{0.3125, 0.5, 0.3125}
\definecolor{keycol}{rgb}{0, 0, 1}
\newcommand{\bm}[1]{\boldsymbol{#1}}
\newcommand{\argmax}{\arg\max}
\newcommand{\lb}{\lambda}
\newcommand{\va}{{\mathbf{a}}}
\newcommand{\vb}{{\mathbf{b}}}
\newcommand{\vd}{{\mathbf{d}}}
\newcommand{\vr}{{\mathbf{r}}} 
\newcommand{\vu}{{\mathbf{u}}}
\newcommand{\vv}{{\mathbf{v}}}
\newcommand{\vx}{{\mathbf{x}}}
\newcommand{\vy}{{\mathbf{y}}}
\newcommand{\vz}{{\mathbf{z}}}
\newcommand{\vtheta}{{\bm{\theta}}}
\newcommand{\cG}{{\mathcal{G}}}
\newcommand{\cS}{{\mathcal{S}}}
\newcommand{\cU}{{\mathcal{U}}}
\newcommand{\cX}{{\mathcal{X}}}
\newcommand{\cY}{{\mathcal{Y}}}
\newcommand{\cZ}{{\mathcal{Z}}}
\newcommand{\EE}{\mathbb{E}} 
\newcommand{\RR}{\mathbb{R}} 
\newcommand{\zz}{^{\top}} 
\newcommand{\vzero}{\mathbf{0}} 
\newcommand{\dist}{\mathrm{dist}}    
\newcommand{\dom}{{\mathrm{dom}}} 
\DeclareMathOperator*{\Argmax}{Arg\,max}
\newcommand{\bc}{\begin{center}}
\newcommand{\ec}{\end{center}}
\newcommand{\bdm}{\begin{displaymath}}
\newcommand{\edm}{\end{displaymath}}
\newcommand{\beq}{\begin{equation}}
\newcommand{\eeq}{\end{equation}}
\newcommand{\bfl}{\begin{flushleft}}
\newcommand{\efl}{\end{flushleft}}
\newcommand{\bt}{\begin{tabbing}}
\newcommand{\et}{\end{tabbing}}
\newcommand{\beqn}{\begin{eqnarray}}
\newcommand{\eeqn}{\end{eqnarray}}
\newcommand{\beqs}{\begin{align*}} 
\newcommand{\eeqs}{\end{align*}}  
\newtheorem{assumption}{Assumption}
\newcommand{\vgamma}{{\boldsymbol{\gamma}}}
\newcommand{\vxi}{{\boldsymbol{\xi}}}
\begin{document}

\title{A stochastic smoothing framework for nonconvex  {minEmax} problems with applications to Wasserstein distributionally robust optimization}

\author{\name Wei Liu \email lwdsdqqb@gmail.com \\
       \addr 
       Department of Applied Mathematics, Hong Kong Polytechnic University, Hong Kong, China\\ 
       Institute for Math \& AI, Wuhan (IMAI), Wuhan university, China
       \AND
       \name Muhammad Khan \email khanm7@rpi.edu \\
       \addr Department of Mathematical Sciences\\ Rensselaer Polytechnic Institute, Troy, NY, USA
       \AND
       \name Gabriel Mancino-Ball \email gabriel.mancino.ball@gmail.com \\
       \addr Department of Mathematical Sciences\\ Rensselaer Polytechnic Institute, Troy, NY, USA
       \AND
       \name Yangyang Xu \email xuy21@rpi.edu \\
       \addr Department of Mathematical Sciences\\ Rensselaer Polytechnic Institute, Troy, NY, USA}

\editor{My editor}

\maketitle

\begin{abstract}
	We study a class of stochastic nonsmooth optimization problems in which an outer variable minimizes the expectation of a pointwise maximum. This minimization--expectation--maximization (minEmax) problem arises in Wasserstein distributionally robust optimization and adversarially robust training, and it cannot in general be reformulated as a finite-dimensional minimax problem when the underlying distribution is not empirical. We propose a stochastic smoothing proximal gradient method based on log-mean-exp smoothing of the value function. Under compactness and Lipschitz-type assumptions, we present nonasymptotic analysis in terms of Goldstein stationarity and show that every almost-sure cluster point generated by our method is a Clarke stationary point; by Clarke regularity, such a point is also directional stationary for the original problem. Numerical experiments on newsvendor, robust regression, and adversarially robust learning problems show that the proposed method is competitive with existing baselines.
\end{abstract}

\begin{keywords}
stochastic method, smoothing method,  {minEmax} problem,  Wasserstein distributionally robust optimization, adversarially robust training.
\end{keywords}

	\section{Introduction}\label{sec:intro}
	This paper studies the following minimization--expectation--maximization (minEmax) problem: 
\begin{equation}\label{eq:minmax}
	\min_{\vy\in\RR^{m_1}}\left\{g(\vy):=\varphi(\vy)+\EE_{\vx\sim\mathbb{P}}\left[\max_{\vz\in\mathcal{Z}} \Psi(\vy,\vz;\vx)\right]\right\}.\tag{P}
\end{equation}
Here  $\mathcal{Z}\subseteq\RR^{m_2}$ is a nonempty compact set, $\mathbb{P}$ is a probability distribution on $\mathcal{X}\subseteq\RR^{m_3}$, and independent samples can be drawn from $\mathbb{P}$. 
For each $\vx\in\mathcal{X}$, define
\begin{equation}\label{eq:def-Phi-i}
	\Phi(\vy;\vx):=\max_{\vz\in\mathcal{Z}}\Psi(\vy,\vz;\vx).
\end{equation}
Throughout the paper, we make the following assumptions on problem~\eqref{eq:minmax}. 
\begin{assumption}\label{ass:problemsetup}
	The following statements hold. 
	
	$\mathrm{(i)}$ For every $\vx\in\mathcal{X}$,  
	$  \Psi(\cdot,\cdot;\vx)$ is continuous,  
	$ \Psi(\cdot,\vz;\vx)$ is continuously differentiable, and
	$ \nabla_{\vy}\Psi(\cdot,\cdot;\vx)$ is continuous.
	
	$\mathrm{(ii)}$ The function $\varphi:\RR^{m_1}\mapsto\RR$ is 
	proper closed convex,  and its proximal mapping can be easily evaluated.  
	
	$\mathrm{(iii)}$ The sets $\dom(\varphi)$ and $\mathcal{Z}$ are nonempty and compact.
\end{assumption}

\begin{assumption} 
	\label{ass:compact1}
	There exists $l_{\Psi}>0$ such that, for all $\vx\in\cX$, 
    and 
	$(\vy,\vz)\in\operatorname{dom}(\varphi)\times\mathcal{Z}$,
	$
	\|\nabla_{\vy}\Psi(\vy,\vz;\vx)\|\le l_{\Psi}.
	$
	In addition,  
	the map $\vz\mapsto\Psi(\vy,\vz;\vx)$ is $l_{\Psi}$-Lipschitz continuous on
	$\mathcal{Z}$. 
\end{assumption}
Assumption~\ref{ass:problemsetup} allows nonsmooth 
convex regularizers, such as indicator functions of compact convex sets. The feasible set $\mathcal{Z}$ may be a connected compact set or a finite discrete set; in either case it is compact. By Berge's maximum theorem~\citep{berge1959espaces}, Assumption~\ref{ass:problemsetup} implies that $\Phi(\cdot;\vx)$ is well-defined and continuous for every $\vx\in\mathcal{X}$.
 In addition,
Assumption~\ref{ass:problemsetup}(i) imposes differentiability only with respect to the outer variable $\vy$; no differentiability with respect to $\vz$ is required, while Assumption~\ref{ass:compact1} gives   boundedness of the $\vy$-gradient and Lipschitz continuity in the inner variable.
 
Problem~\eqref{eq:minmax} encompasses a wide variety of modern machine learning applications, particularly those involving challenging inner maximization problems that may be nonsmooth in $\vz$ or may involve a finite feasible set.
Prominent examples include
adversarially robust training (e.g., \cite{goodfellow2014explaining, huang2015learning, madry2018towards, liang2023optimization})
and distributionally robust optimization (DRO)~(e.g., \cite{kuhn2024distributionallyrobustoptimization,rahimian2019distributionally}); see Section~\ref{sec:appli} for more details.

When $\mathbb{P}$ is the uniform distribution on a finite dataset $\{\vx_1,\ldots,\vx_n\}$, problem~\eqref{eq:minmax} can be written as
\[
\min_{\vy}\;\max_{\vz_1,\ldots,\vz_n\in\mathcal{Z}}
\left\{\varphi(\vy)+\frac{1}{n}\sum_{i=1}^n\Psi(\vy,\vz_i;\vx_i)\right\}.
\]
Minimax methods (see, e.g.,~\citep{xu2023unified,nouiehed2019solving,jiang2022optimality,liu2021first,li2022nonsmooth,lin2025twotimescale,thekumparampil2019efficient})  can then be applied in principle, although the reformulation may be expensive for a large $n$; see, e.g., \citep{liang2023implications}. 
For a general distribution $\mathbb{P}$, an analogous reformulation would require an infinite-dimensional measurable decision rule $\vx\mapsto \vz(\vx)$, rather than finitely many variables $\vz_1,\ldots,\vz_n$.  This fundamental distinction underscores the necessity for developing 
a new optimization framework that can efficiently handle the inherent complexity of expectation-over-maximization problems with general probability distributions. 

	We therefore view problem~\eqref{eq:minmax} as nonsmooth minimization of an expected-value objective. 
	Though a subgradient method is standard for nonsmooth stochastic minimization, 
	its convergence analysis typically requires access to high-quality (stochastic) subgradient evaluations. 
	For problem~\eqref{eq:minmax}, accurate inner objective values are not sufficient for this purpose: Example~\ref{exam:2} in Appendix~\ref{sec:appenE} shows that a nearly optimal inner maximizer can yield a vector far from the Clarke subdifferential of the outer objective. We avoid this issue by applying the log-mean-exp (LME) smoothing~\citep{shapiro2021lectures} to $\Phi(\cdot;\vx)$.
	Specifically, for every $\vx\in\mathcal{X}$ and $\mu>0$, define the LME smoothing function $\widetilde{\Phi}$ as follows:
	\begin{equation}\label{eq:smoothphi}
		\widetilde{\Phi}(\vy,\mu;\vx): =\mu \log \mathbb{E}_{\vz\sim \zeta} [e^{\Psi(\vy,\vz;\vx)/\mu}] =\mu\log\int_{\mathcal{Z}}\exp(\Psi(\vy,\vz;\vx)/\mu)\,\zeta(d\vz),
	\end{equation}
	where $\zeta$ denotes a fixed probability measure on $\mathcal{Z}$. In particular, if $\mathcal{Z}$ is finite, $\zeta$ corresponds to the uniform measure; if $\mathcal{Z}$ is a connected compact set (with positive Lebesgue measure), $\zeta$ is the normalized Lebesgue measure on $\mathcal{Z}$ (i.e., $\zeta(\cZ)=1$).
	 
	 By leveraging the properties of the LME smoothing function, we effectively mitigate the computational challenges discussed above and develop a stochastic smoothing proximal gradient method (SSPG). Our analysis in Section~\ref{sec:algorithm} establishes that every almost sure accumulation point of the sequence generated by SSPG is a Clarke stationary point of problem~\eqref{eq:minmax}. Given that the primal objective function is Clarke regular under our assumptions, such accumulation points are additionally directional stationary~\citep{pang2017computing, cui2018composite, cui2021modern}. {We further establish the iteration complexity results of SSPG to find an approximate stationary point under two different notions.}
	 
 The remainder of this section proceeds as follows: we first describe several relevant applications structured as~\eqref{eq:minmax}, then summarize our main technical contributions, and finally introduce necessary notation and definitions.

\subsection{Applications}\label{sec:appli}
Two representative applications with the structure of~\eqref{eq:minmax} are Wasserstein distributionally robust optimization (WDRO) and adversarially robust training. 

\subsubsection{WDRO}
The \textbf{WDRO problem}~\citep{kuhn2024distributionallyrobustoptimization,shafieezadeh2019regularization} can be formulated as
\begin{equation}
	\label{eq:model}
	\min_{\vtheta\in \Theta} \max _{\widehat{\mathbb{P}} \in \mathcal{B}_\delta(\mathbb{P})} \mathbb{E}_{\vx \sim \widehat{\mathbb{P}}}\left[\ell(\vtheta,\vx)\right],
\end{equation}
where $\mathbb{P}$ is the nominal distribution on the uncertainty set $\mathcal{Z}$, $\widehat{\mathbb{P}}$ ranges over alternative distributions in the Wasserstein ambiguity set, $\ell:\RR^{m_1}\times \RR^{m_2}\mapsto \RR$ is a given {(possibly nonconvex)} loss function, $\Theta$ is a closed convex set, 
and the ambiguity set 
$\mathcal{B}_\delta(\mathbb{P})$ is defined as the \mbox{$\delta$-ball} in the $p$-th Wasserstein distance centered at $\mathbb{P}$, i.e., 
\mbox{$\mathcal{B}_\delta(\mathbb{P})=\{\widehat{\mathbb{P}} \in \mathcal{P}(\mathcal{Z})\mid d_{\mathcal{W}_p}(\widehat{\mathbb{P}}, \mathbb{P}) \leq \delta\}.$} Here, $\mathcal{P}(\mathcal{Z})$ is the space of probability distributions $\widehat{\mathbb{P}}$ supported on $\mathcal{Z}$ with $\mathbb{E}_{\vx \sim \widehat{\mathbb{P}}} [\|\vx\|] <\infty$,  and the $p$-th Wasserstein distance \citep{kantorovich1958space} between distributions $\mathbb{Q}_1, \mathbb{Q}_2 \in \mathcal{P}(\mathcal{Z})$  is defined by 
\begin{align*}
&d_{\mathcal{W}_p}\left(\mathbb{Q}_1, \mathbb{Q}_2\right):= \\ &\inf \left\{\left(\int_{\mathcal{Z}\times \mathcal{Z}}\left\|\vz_1-\vz_2\right\|_p^p \mathrm{\Pi}\left(\mathrm{d} \vz_1, \mathrm{d} \vz_2\right)\right)^{1/p} \bigg| \begin{array}{l}
	\mathrm{\Pi} \text { is a joint distribution of } \vz_1 \text { and } \vz_2 \text{ with}\\
	\text {marginal distributions } \mathbb{Q}_1 \text { and } \mathbb{Q}_2 \text {, respectively}
\end{array}\right\}.
\end{align*}
For a fixed decision $\vtheta$, the inner problem in~\eqref{eq:model} is the worst-case risk:
\begin{equation}
	\label{eq:worstrisk}
	\max_{\widehat{\mathbb{P}} \in \mathcal{B}_\delta(\mathbb{P})} \mathbb{E}_{\vx \sim \widehat{\mathbb{P}}}\left[\ell(\vtheta,\vx)\right].
\end{equation}
Under some mild conditions~\citep{yue2022linear, gao2023distributionally}, the worst-case risk in~\eqref{eq:worstrisk}  is finite and attainable, and furthermore,
strong duality holds. 
The dual problem of~\eqref{eq:worstrisk} is given by~\citep{rahimian2019distributionally,yue2022linear,gao2023distributionally}
\begin{equation}
	\label{eq:worstriskdual}
	\min_{\lb \geq 0} \lambda \delta^p+\mathbb{E}_{\vx \sim \mathbb{P}}\left[\max _{\vz \in \mathcal{Z}}\{\ell(\vtheta,\vz)-\lambda d(\vx,\vz)\}\right]. 
\end{equation}
Here, $d: \RR^{m_2}\times \RR^{m_2}\mapsto \RR$ denotes the transport cost of the Wasserstein metric of order $p\in \mathbb{N}_+$ defined as $d(\vz_1,\vz_2):=\|\vz_1-\vz_2\|_p^p$ and $\lb\in\RR_+$ is the Lagrangian multiplier with respect to the inequality constraint $d_{\mathcal{W}_p}^p(\widehat{\mathbb{P}}, \mathbb{P}) \leq \delta^p$. By strong duality, the optimal values of the aforementioned two models are the same. 
The following lemma gives the compact interval for the dual multiplier; its proof is deferred to Appendix~\ref{appen:A}.
		\begin{lemma} 
		Consider problem \eqref{eq:worstriskdual} at a given parameter $\vtheta$, with $d(\vz_1,\vz_2)=\|\vz_1-\vz_2\|_p^p$, $p\ge 1$ and $\delta>0$. Assume the loss function $\ell(\vtheta,\cdot)$ is $L$-Lipschitz continuous for all $\vtheta\in\Theta$. Then, any optimal solution $\lambda^\star$ of problem \eqref{eq:worstriskdual} has an upper bound 
		\[
		\lambda^\star \le L\,C_{p,m_2}\,\delta^{-(p-1)},\quad \text{where}\quad
		C_{p,m_2}=\sup_{\vv \in \RR^{m_2}, \vv\neq \vzero}\frac{\|\vv\|_2}{\|\vv\|_p}=\begin{cases}
			1, & 1\le p\le 2,\\[0.6ex]
			m_2^{\frac12-\frac1p}, & p>2.
		\end{cases}
		\]
	\end{lemma} 
	Hence, problem~\eqref{eq:worstriskdual} can be restricted to $\lambda\in[0,B_\lambda]$ without loss of optimality, where \mbox{$B_{\lb}= L\,C_{p,m_2}\,\delta^{-(p-1)}$.} Substituting this dual representation into~\eqref{eq:model},  and explicitly imposing the constraint $\lb\in[0,B_{\lb}]$, we arrive at the following reformulation:
\begin{equation}
	\label{eq:model2}
	\min_{\vtheta\in \Theta, \lb\in[0,B_{\lb}]} g(\vtheta, \lb):= \lambda \delta^p+\mathbb{E}_{\vx \sim \mathbb{P}}\left[\max _{\vz \in \mathcal{Z}}\{\ell(\vtheta,\vz)-\lambda d(\vx,\vz)\}\right].
\end{equation}
This problem has the form~\eqref{eq:minmax}, with $\vy=(\vtheta,\lambda)$ and
$
\Psi((\vtheta,\lambda),\vz;\vx)=\ell(\vtheta,\vz)-\lambda d(\vx,\vz).
$
It satisfies Assumption~\ref{ass:problemsetup} when $\Theta$, $[0,B_\lambda]$, and $\mathcal{Z}$ are compact. 


\subsubsection{Adversarially robust training}

In a classification setting, the \textbf{adversarially robust training} \citep{goodfellow2014explaining, huang2015learning, madry2018towards, liang2023optimization} problem takes the form of 
\begin{equation}\label{eq:adt}\min _{{\vtheta\in \Theta}}\frac{1}{n}\sum_{i=1}^n \max _{\vz \in \Delta({\vx}_i)} \ell\left({\bar{\vx}_i}, f(\vtheta, \vz)\right).\end{equation}
Here, $\{(\vx_i,{\bar{\vx}}_i)\}_{i=1}^n
\subset \RR^{m_2}\times \RR^{m_3}$ are sampled ordered pairs of data points and their corresponding labels (respectively),
$\Delta({\vx})=\left\{\vz\in\RR^{m_2}\big| d\left({\vx}, \vz\right) \leq \epsilon\right\}$ represents a set of feasible perturbations applied to a given data point $\vx$ based on some distance function $d$, 
$f:\RR^{m_1}\times \RR^{m_2}\mapsto \RR^{m_3}$ is a  prediction function, and $\ell:~\RR^{m_3}\times \RR^{m_3}\mapsto \RR$ is a {(possibly nonconvex)} loss function. 
In the context of computer vision, this model seeks to identify the worst-case perturbations for images within a prescribed radius \citep{huang2015learning, madry2018towards}.
Let $\mathbb{P}_n$ be the empirical distribution on the datasets.  Then $$\frac{1}{n}\sum_{i=1}^n \max _{\vz \in \Delta({\vx}_i)} \ell\left({\bar{\vx}_i}, f(\vtheta, \vz)\right) = \EE_{(\vx,\bar\vx)\sim \mathbb{P}}\max _{\vz \in \Delta({\vx})} \ell\left({\bar{\vx}}, f(\vtheta, \vz)\right),$$ and thus~\eqref{eq:adt} can be formulated into~\eqref{eq:minmax}.  {This formulation satisfies Assumption~\ref{ass:problemsetup} whenever $\Theta$ is compact, $\ell(\cdot,\cdot)$ is continuous, and $\ell(\bar\vx,\cdot)$ and
	$f(\cdot,\vz)$ are smooth for all $\bar\vx$ and $\vz$.}
{Indeed, adversarially robust training can be viewed as a dual problem of the $\infty$-WDRO problem 
	whose ambiguity set is characterized by the Wasserstein distance with $p=+\infty$; see~\citep[Equation (D)]{gao2024wasserstein}. 
	Notice that the dual formulation in problem \eqref{eq:model2} applies for $p\in [1,\infty)$ but not for $p=\infty$. 
}

\subsection{Contributions}
\label{sec:contri}
This paper develops a stochastic smoothing framework for the minEmax problem~\eqref{eq:minmax}, a nonsmooth expected-value problem in which the outer variable is to minimize the expectation of a pointwise maximum. This formulation covers, among other examples, Wasserstein distributionally robust optimization (WDRO) after dualization of the worst-case risk and adversarially robust training. Our main contributions are summarized below.

\begin{enumerate}
 
	\item 
	We introduce the log-mean-exp (LME) smoothing in~\eqref{eq:smoothphi}
	for each 
    $\vx\in\cX$. We prove that $\widetilde\Phi(\cdot,\mu;\vx)$ is a valid smoothing function of $\Phi(\cdot;\vx)$ defined in \eqref{eq:def-Phi-i}, preserves the basic Lipschitz bound in~$\vy$, and has a $\vy$-gradient Lipschitz constant of order $L_\Psi+l_\Psi^2/\mu$. We also derive approximation-gap bounds for both finite~$\cZ$ and connected compact~$\cZ$. These bounds make it possible to translate stationarity of the smoothed objectives into Goldstein stationarity of problem~\eqref{eq:minmax}.
	
	\item 
	Using the LME smoothing, we propose SSPG, a  stochastic smoothing proximal gradient method. At iteration~$k$, SSPG uses a stochastic estimator of $\nabla_{\vy}\EE_{\vx\sim\mathbb P}[\widetilde\Phi(\vy,\mu_k;\vx)]$ and performs one proximal gradient step with respect to the nonsmooth convex term~$\varphi$, while the smoothing parameter~$\mu_k$ is kept fixed or decreased according to a prescribed rule. We establish a nonasymptotic stationarity bound for a randomly selected iterate. In particular, SSPG obtains an $\epsilon$-scaled stationary point in expectation with iteration complexity $O(\epsilon^{-3})$ and sample complexity $O(\epsilon^{-5})$. After converting scaled stationarity to Goldstein stationarity, SSPG obtains an $(O(\epsilon),O(\epsilon))$-Goldstein stationary point with iteration complexity $\widetilde O(\epsilon^{-4})$ and sample complexity $\widetilde O(\epsilon^{-6})$. With a summable refinement schedule, every almost-sure cluster point of the resulting sequence of approximate stationary points is Clarke stationary, and hence directional stationary, for~\eqref{eq:minmax}.
	
	\item 
	For WDRO, we prove an explicit global upper bound on the optimal dual multiplier in the Wasserstein worst-case-risk reformulation. This allows the dual variable to be restricted to a compact interval without loss of optimality, making the compact-domain assumptions compatible with WDRO analysis and implementation. We then test SSPG on newsvendor, robust regression, $\infty$-Wasserstein/adversarially learning, and adversarially robust deep-learning tasks. The experiments show that SSPG is competitive with the tested baselines and, in several settings, improves the accuracy--runtime tradeoff relative to nonsmoothed or fixed-entropic alternatives.
\end{enumerate}

 \subsection{Definitions and Notations}\label{sec:defi}
Let $\|\cdot\|$ be the 2-norm of a vector.   
We use $[M]$ to represent $\{1,2,\ldots,M\}$ for an integer~$M$. Given a compact set $\mathcal{Z}$, denote $|\mathcal{Z}|$ as the cardinality of $\cal{Z}$ if it is a finite set and volume of $\cal{Z}$ if it is a continuous set.
The indicator function is denoted as $\mathbb{I}.$
We refer to $\mathbf{1}_m$ as the $m$-dimensional vector of all ones.
We write $\operatorname{Proj}$, $\operatorname{Prox}$, and $\operatorname{dist}$ for the projection, proximal, and distance operators, respectively.
The \textit{normal cone} of a convex set $\mathcal{Y}$ at $\vy^*$ is defined as 
\mbox{$\mathcal{N}_{\mathcal{Y}}(\vy^*)=\{\vgamma:\langle \vgamma, \vy-\vy^*\rangle \leq 0,  \forall \vy \in \mathcal{Y}\}.$}  

Let \(h:\mathbb{R}^d\to\mathbb{R}\) be locally Lipschitz continuous.
We denote by $h'(\bar{\vy};\vd)$ the directional derivative of $h$ at $\bar{\vy}$ in direction $\vd$:
$$
	h^{\prime}\left(\bar{\vy} ; \vd \right)=\lim _{ t \downarrow 0} \frac{1}{t}(h(\bar{\vy}+t \vd)-h(\bar{\vy})).
$$
If $h^{\prime}\left(\bar{\vy} ; \vd\right)$ is well-defined for any unit vector $\vd$, we say $h$ is directionally differentiable at $\bar{\vy} $. 
It is known that if $h$ is piecewise differentiable and Lipschitz continuous, then it is directionally differentiable \citep{mifflin,cui2021modern}. 
Denote $h^{\circ}(\bar{ \vy} ; \vd)$ as the \textit{generalized directional derivative} at $\bar{\vy}$ along the direction~$\vd$~\citep{clarke1990optimization}, by 
$$h^{\circ}(\bar{ \vy} ; \vd):=\limsup _{{\vy \rightarrow \bar{ \vy}}, {t \downarrow 0}} \frac{1}{t}(h(\vy+t \vd)-h(\vy)).$$ 
In general, it holds $h^{\circ}(\bar{ \vy} ; \vd)\geq h^{\prime}\left(\bar{\vy} ; \vd \right)$~\citep{clarke1990optimization}, and $h^{\prime}\left(\bar{\vy} ; \vd \right)=h^{\circ}(\bar{ \vy} ; \vd)=\vd\zz\nabla h(\bar{ \vy})$ 
if $h$ is differentiable. 
We use $\partial h$ to denote the (Clarke) subdifferential~\citep[Section 1.2]{clarke1990optimization} of a continuous function $h$, i.e.,
$$
\partial h(\bar{\vy})
:=
\left\{
\vxi\in\RR^d:
\langle \vxi,\vd\rangle\le h^\circ(\bar{\vy};\vd)
\ \text{for all }\vd\in\RR^d
\right\}.
$$
 For
\(\mu\ge 0\), we define the Goldstein \(\mu\)-subdifferential~\citep{kornowski2024algorithm} of \(h\)
at~\(\vy\) by
$
\partial^{\mu} h(\vy)
:= \operatorname{conv} \left(
\bigcup_{\vu\in \mathbb{B}(\vy,\mu)} \partial h(\vu)
 \right),
$
where \(\mathbb{B}(\vy,\mu)=\{\vu:\|\vu-\vy\|\le \mu\}\).

\begin{definition}[Clarke regular {\citep[Definition 2.3.4]{clarke1990optimization}}]
	\label{def:clarkeregular}
	A locally Lipschitz continuous function $h$ is Clarke regular at $\bar{\vy}$ if, for every direction $\vd$, the directional derivative $h'(\bar{\vy};\vd)$ exists and
	$
	h'(\bar{\vy};\vd)=h^\circ(\bar{\vy};\vd).
	$
\end{definition}
From Definition~\ref{def:clarkeregular}, 
if $h$ is convex or  differentiable, then it is Clarke regular~\citep{clarke1990optimization} and directionally differentiable.

\begin{definition}[Stationarity]\label{def:sta}
	Let $h:\RR^d\to\RR$ be directionally differentiable at $\vy^*$.
	The point $\vy^*$ is called directional stationary for $\min_{\vy}h(\vy)$ if
	$
	h'(\vy^*;\vd)\ge 0  \text{ for all }\vd\in\RR^d.
	$
	If $h$ is locally Lipschitz, $\vy^*$ is called Clarke stationary if
	$
	\vzero\in\partial h(\vy^*).
	$
	For $\mu,\epsilon\ge0$, $\vy^*$ is called $(\mu,\epsilon)$-Goldstein stationary, in expectation,  if
	$ 
	\mathbb{E}\left[\operatorname{dist}\bigl(\vzero,\partial^{\mu} h(\vy^*)\bigr)^2\right]\le \epsilon^2.
	$ 
\end{definition} 
For a locally Lipschitz function, directional stationarity implies Clarke stationarity. If $h$ is Clarke regular, the converse also holds. Thus the two notions coincide for Clarke regular objectives. The $(\mu,\epsilon)$-Goldstein condition is a finite-accuracy relaxation: it aggregates Clarke subgradients in a $\mu$-neighborhood of the candidate point and requires the resulting convex hull to contain a vector of norm at most $\epsilon$. It is particularly suitable for nonasymptotic
analysis of nonsmooth nonconvex stochastic optimization, when exact Clarke
stationarity is   difficult to certify. As \(\mu\downarrow 0\) and
\(\epsilon\downarrow 0\), Goldstein stationarity recovers Clarke stationarity under   outer-semicontinuity conditions.

\begin{definition}[Smoothing function \citep{chen2012smoothing}]\label{def:smooth}
	Let $h: \mathbb{R}^m \mapsto \mathbb{R}$ be continuous. We call $\widetilde{h}: \mathbb{R}^m \times \mathbb{R}_{+} \mapsto \mathbb{R}$ a smoothing function of $h$, if for any fixed {$\mu>0$}, $\widetilde{h}(\cdot, \mu)$ is continuously differentiable, and for any $\bar{\vy}$, it holds $\lim_{\vy\rightarrow\bar{\vy},\mu\downarrow 0} \widetilde{h}(\vy,\mu) = h(\bar{\vy})$.
\end{definition}

Approximate stationarity conditions based on Goldstein subdifferentials have recently emerged as   tools in the nonasymptotic analysis of nonsmooth   nonconvex optimization. For instance, \citet{lin2022gradient} established connections between ``uniform smoothing" and Goldstein stationarity, and developed both deterministic and stochastic gradient-free methods with corresponding nonasymptotic convergence guarantees. 
The LME smoothing approach employed in this paper differs from the ``uniform smoothing" technique introduced by \citet{lin2022gradient}. Specifically, our method smooths the function~$\Phi$, which incorporates the maximum operator within an expectation-over-maximization objective, a scenario not covered by the framework of ``uniform smoothing''. 
Nevertheless, Goldstein stationarity serves a convenient tool in our analysis.

\subsection{Organization}
The remainder of the paper is organized as follows.  Section~\ref{sec:liter} provides a review of the related literature. In Section~\ref{sec:algorithm}, 
we introduce the SSPG framework for solving the problem~\eqref{eq:minmax}. 
We first construct a smoothing function, then give our proposed algorithmic framework, and finally provide the convergence analysis, {with proofs given in Appendix \ref{sec:proof}}.  
Numerical results are presented in Section~\ref{sec:numerical}. 
We conclude the paper 
in Section~\ref{sec:conclu}.

					\section{Related Work}
\label{sec:liter}
	In this section, we briefly survey several optimization approaches for 
	WDRO problems. Additionally, we review smoothing methods and the  LME smoothing function, which serve as foundational tools for the proposed framework.

\subsection{Existing Methods for Solving WDRO}
{As introduced in~\citep{rahimian2019distributionally}, a widely adopted strategy in the distributionally robust optimization (DRO) literature is to  solve problem~\eqref{eq:model2}. Standard minimax solvers are   applicable when $\mathbb{P}$ is an empirical distribution. However, for general distributions, practical algorithms typically 
	require additional structural conditions. Examples include: (i) discretization of the support set $\mathcal{Z}$~\citep{xu2018distributionally, chen2021decomposition, liu2021discrete, pflug2007ambiguity}; (ii) representation of $\ell(\vtheta,\cdot)$ as a finite maximum of concave functions under an $\ell_1$ transport cost~\citep{mohajerin2018data,gao2023distributionally}; (iii) log-loss models, such as robust logistic regression~\citep{li2019first, selvi2022wasserstein, shafieezadeh2015distributionally}; or (iv) strong concavity of $\ell(\vtheta,\cdot)-\lambda d(\cdot,\vx)$~\citep{blanchet2022optimal,sinha2018certifiable}. 
	Without these structural assumptions, solving WDRO problems computationally becomes significantly challenging.}

Recognizing the inherent challenges in addressing the WDRO problem, \citet{le2025unregularized} investigate a regularized version wherein entropic smoothing yields a sampled approximation to the original objective. They demonstrate convergence of approximate gradients toward Clarke subgradients of the unregularized WDRO objective as the regularization parameter tends toward zero. This entropic-regularization perspective has also influenced robust learning software development, notably in \citep{florian2026skwdro, liu2025dro}. In their analyses, the authors explicitly distinguish between regularized WDRO  problem and the original WDRO problem, underscoring that their methods do not fundamentally solve the original WDRO problem. 

 \citet{wang2021sinkhorn} innovatively proposes addressing a dual problem of Sinkhorn DRO (SDRO),  
which can be written as 
\begin{equation}
	\label{eq:smoothg}
	\min_{\vtheta\in \Theta, \lb\in[0,B_{\lb}]}g_{\mathrm{s}}(\vtheta, \lb) :=  \lambda \delta^p+  \lb\eta \mathbb{E}_{\vx \sim \mathbb{P}}\left[\log \mathbb{E}_{\vz \sim \zeta}\left[e^{(\ell(\vtheta,\vz) -\lb d(\vx,\vz)) / \lb\eta}\right]\right]
\end{equation}
for some $\eta>0$, where $B_{\lb}$ is the same as that in \eqref{eq:model2}, and $\zeta$ is a distribution supported on $\cZ$. This perspective connects naturally to regularized WDRO~\citep{le2025unregularized}, since Sinkhorn distance is itself a Wasserstein-type discrepancy based on entropic regularization.
The objective function in~\eqref{eq:smoothg} can be viewed as a smoothing function of~\eqref{eq:model2} by Definition~\ref{def:smooth} with $\eta $ as the smoothing parameter. 
Assuming $\ell(\cdot,\vz)$ is convex for all feasible~$\vz$, 
\citet{wang2021sinkhorn} develop a convergent triple-loop algorithm by skillfully combining a bisection method, a stochastic mirror descent method, and multilevel Monte-Carlo simulation.
However, they fix $\eta>0$. In addition, in their complexity result \citep[Theorem 3]{wang2021sinkhorn}, they assume that any optimal solution $(\vtheta^*,\lb^*)$ of problem~\eqref{eq:smoothg} satisfies $\lb^*\ge \overline{\lb}$ for some positive scalar $\overline{\lb}$.
This assumption circumvents significant computational challenges by preventing both the Lipschitz constant and the gradient Lipschitz constant from exploding if 
$\lb$ approaches~0 as the algorithm progresses,  
in which case, an exponential increase in the sampling complexity of their multilevel Monte-Carlo simulations would occur with respect to $\frac{1}{\lb}$ (see the proof in \citep[Proposition EC.4]{wang2021sinkhorn}). 
Hence, their algorithm does not solve the original WDRO problem.

	In contrast, when applied to WDRO, our proposed method for~\eqref{eq:minmax} smooths the function in~\eqref{eq:model2} by the {LME} function and solves 
	\begin{equation}
		\label{eq:smoothg2}
		\min_{\vtheta \in \Theta, \lb\in[0,B_{\lb}]} \widetilde{g}(\vtheta, \lb, \mu) := \lambda \delta^p + \mu \mathbb{E}_{\vx \sim \mathbb{P}}\left[\log \mathbb{E}_{\vz \sim \zeta}\left[e^{(\ell(\vtheta, \vz) -\lb d(\vx, \vz)) / \mu}\right]\right].
	\end{equation}
	In contrast to \citep{wang2021sinkhorn}, our formulation allows the dual variable $\lambda$ to approach zero.  The formulation in~\eqref{eq:smoothg2} 
	replaces \(\lb \eta\) in~\eqref{eq:smoothg} with \(\mu\).  However, this subtle difference in the formulation leads to a fundamentally different method  
	from the algorithm in \citep{wang2021sinkhorn}.
	First, \citep{wang2021sinkhorn}  
	solves a dual problem of SDRO with a  fixed  \(\eta\). In contrast, we  
	solve WDRO directly, requiring \(\mu \downarrow 0\) in our algorithmic framework. 
	This distinction changes the limiting problem and requires controlling the smoothing bias as $\mu\downarrow0$, while the gradient Lipschitz constants of the smoothed objectives typically grow as $\mu$ decreases.
	Second, \(\lb\) is a primal variable in problem~\eqref{eq:smoothg} and appears in the denominator of the exponential term. 
	This results in both the Lipschitz constant and the gradient Lipschitz constant of~$g_s$  with respect to~$\lb$ becoming unbounded as 
	$\lambda$ approaches zero, which potentially yields slow convergence and high complexity. 
	In contrast, the Lipschitz constant of $\widetilde{g}$  with respect to \(\lb\) is bounded, and the gradient Lipschitz constant of $\widetilde{g}$   with respect to \(\lb\) can be bounded  by $\frac{C}{\mu}$ for some positive scalar $C$. 

	\subsection{Smoothing Methods and the LME Smoothing Function}
	{ Smoothing methods provide a powerful and widely-used framework for solving nonsmooth optimization problems; we refer readers to foundational discussions and examples in~\citep{nesterov2007smoothing, chen2012smoothing, burke2013epi, burke2017epi,liu2022linearly,liu2025single}. Typically, smoothing methods involve three essential steps:
		(i) constructing smooth approximations $\{\widetilde{g}(\cdot,\mu)\}_{\mu>0}$ to the nonsmooth function $g$, parameterized by a smoothing parameter $\mu>0$;
		(ii) designing algorithms to solve the smoothed problems $\min_\vy \widetilde{g}(\vy,\mu_k)$ approximately for a sequence $\{\mu_k\}$; and
		(iii) analyzing the convergence behavior of solutions as $\mu_k\downarrow 0$.
		A theoretical challenge occurs as $\mu\downarrow 0$:  
		the gradients $\nabla\widetilde g(\cdot,\mu)$ are not Lipschitz continuous with constants bounded uniformly over $\mu\in(0,1]$; in many smoothing schemes the Lipschitz constant scales as $\Theta(1/\mu)$. 
		
		For deterministic nonsmooth optimization, the theoretical foundations, including epi-convergence, iteration complexity analysis, and   gradient bounds, are well-established. In contrast, the theory for stochastic smoothing methods remains underdeveloped. Significant challenges in the stochastic scenario include the intricate coupling between smoothing-induced bias (controlled by $\mu$) and variance in stochastic gradient estimators, and the rigorous justification of interchangeability between expectation and differentiation, requiring careful considerations of measurability and integrability. Recently, \citet{wang2023stochastic} analyzed convergence properties of stochastic smoothing methods under  convexity of $\Psi$, and   bounded variance of the stochastic  {gradient} estimators for $g(\cdot,\mu)$ for every fixed smoothing parameter $\mu>0$. Their analysis, however, does not establish convergence to Clarke stationary points almost surely, a critical theoretical guarantee in smoothing methods. Without these   restrictive conditions, our paper directly addresses this gap by providing such convergence guarantees (see Theorem~\ref{lem:scaled}).
		
		We now discuss the LME smoothing function given in~\eqref{eq:smoothphi}. 
		When the reference measure $\zeta$ is uniform on a finite set
		$\mathcal{Z}=\{\vz_1,\ldots,\vz_q\}$, the LME smoothing becomes
		$
		\widetilde\Phi(\vy,\mu;\vx)
		=
		\mu\log\left(\frac1q\sum_{j=1}^q
		\exp(\Psi(\vy,\vz_j;\vx)/\mu)\right),
		$
		which is the log-sum-exp smoothing of the pointwise maximum up to the additive constant $-\mu\log q$.  This well-known LSE function, also termed the  {softmax maximum}~\citep{lecun2015deep} or the  {Neural Networks smoothing function}~\citep{chen2012smoothing, burke2013gradient}, has been extensively applied in finite minimax optimization contexts~\citep{polak2003algorithms, pee2011solving, blanchet2020semi, burke2020subdifferential,wang2023stochastic}, particularly when a differentiable approximation of the maximum operator is desired. Its gradient coincides exactly with the softmax function commonly employed in neural network models~\citep{lecun2015deep}, and extensive studies have documented its robust theoretical properties and practical effectiveness.
		
		While the LME function generalizes LSE to more general settings, including connected compact sets of positive Lebesgue measure, its theoretical properties and convergence analyses differ substantially. Unlike LSE, whose function value can be directly calculated, 
		the LME value generally requires numerical integration or sampling, except in special cases where the exponential integral is available in closed form.
		 Hence, convergence analyses of stochastic smoothing methods employing the   LME smoothing require separate and more intricate mathematical treatments, which have not been previously developed in the literature.
	}

		\section{SSPG: A Stochastic Smoothing Proximal Gradient Framework for Solving Problem~\eqref{eq:minmax}}\label{sec:algorithm}

We focus on solving the primal problem $\min_{\vy} g(\vy)$ in~\eqref{eq:minmax}. 
We 
 {first show a few important properties of the} smoothing function $\widetilde{\Phi}(\cdot,\mu;\vx)$ of $\Phi(\cdot;\vx)$ for each 
$\vx\in\cX$ in Section~\ref{sec:smoothingphi}.
With access to a stochastic gradient estimator, we then introduce the SSPG method in Section~\ref{sec:conver2} and establish its convergence results in Section~\ref{sec:conver3}. 

\subsection{The {LME} Smoothing Function}\label{sec:smoothingphi}

For every $\vx\in\mathcal{X}$, we define the LME smoothing function by~\eqref{eq:smoothphi}. The reference probability measure $\zeta$ is fixed throughout the algorithm: it is the uniform counting measure when $\mathcal{Z}$ is finite, and the normalized Lebesgue measure when $\mathcal{Z}$ has positive Lebesgue measure.
We  make the following assumption to ensure the smoothness of $\widetilde{\Phi}(\cdot,\mu;\vx)$. 
\begin{assumption}[{Gradient Lipschitz}]
	\label{ass:lk} 
There exists $L_\Psi>0$ such that 
	$\nabla_{\vy}{\Psi}(\cdot,\vz;\vx)$ is $L_{\Psi}$-Lipschitz continuous on $ \dom(\varphi)$  	for all $\vz\in\mathcal{Z}$ and all $\vx\in\cX$. 
\end{assumption}
The following lemma shows that $\widetilde{\Phi}(\cdot,\cdot;\vx)$ is a smoothing function of $\Phi(\cdot;\vx)$ and
establishes key 
properties of the smoothing function. 
In particular, it shows that $\widetilde{\Phi}(\vy,\mu;\vx)$ is nonincreasing with respect to $\mu$. The proof of this lemma is provided in Appendix~\ref{sec:smoothphi}.

\begin{lemma}
	\label{lem:smoothphi}
	Under Assumptions~\ref{ass:problemsetup}, \ref{ass:compact1} and~\ref{ass:lk}, the following statements hold for each $\vx\in\cX$.
	\begin{enumerate}
		\item[\textnormal{(a)}] $\widetilde{\Phi}(\cdot,\cdot;\vx)$ is a smoothing function of $\Phi(\cdot;\vx)$.
		\item[\textnormal{(b)}] 
		It holds that $\|\nabla_{\vy} \widetilde{\Phi}(\vy,\mu;\vx)\|\leq l_{\Psi}$ for any $\mu>0$, 
		$\widetilde{\Phi}(\vy,\mu_1;\vx)\leq \widetilde{\Phi}(\vy,\mu_2;\vx)$ for any $\mu_1\geq\mu_2>0$, and $\lim_{ \mu \downarrow 0} \mu \nabla_\mu \widetilde{\Phi}(\mathbf{y}, \mu;\vx) =0$.
		
		\item[\textnormal{(c)}] The $\vy$ partial gradient $\nabla_{\vy} \widetilde{\Phi}(\vy,\mu;\vx)$ is $(L_{\Psi}+ l^2_{\Psi}/\mu)$-Lipschitz continuous with respect to $\vy$ for any $\mu>0$.
	\end{enumerate}
\end{lemma}

For any $\mu_1,\mu_2>0$ and all $\vx\in\cX$, 
we establish upper bounds on   \mbox{$|\widetilde{\Phi}(\vy,\mu_1;\vx)-\widetilde{\Phi}(\vy,\mu_2;\vx)|$} with respect to $|\mu_1-\mu_2|$
in the following two lemmas. Their proofs are given in Appendices~\ref{sec:distancemufinite} and~\ref{sec:distancemu}, respectively.

\begin{lemma}
	\label{lem:distancemufinite}
	Suppose Assumptions~\ref{ass:problemsetup}, \ref{ass:compact1} and~\ref{ass:lk} hold, and $\mathcal{Z}$ is finite discrete. Then for any \mbox{$\kappa \geq 2 \log(|\cal Z|)$,} $\vy\in~\dom(\varphi)$, $1\ge \mu_1>\mu_2 > 0$, and all $\vx\in\cX$, 
    it holds  that {$$|\widetilde{\Phi}(\vy,\mu_1;\vx)-\widetilde{\Phi}(\vy,\mu_2;\vx)|\leq \kappa(\mu_1-\mu_2).$$}
\end{lemma}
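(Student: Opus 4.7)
The plan is to show that $\mu\mapsto\widetilde{\Phi}_i(\vy,\mu)$ is globally Lipschitz in $\mu$ on $(0,\infty)$ with constant at most $\log|\mathcal{Z}|$, which is itself at most $\kappa/2\le\kappa$. Lemma~\ref{lem:smoothphi} already tells us that $\widetilde{\Phi}_i$ is $C^1$ in $\mu>0$, so the fundamental theorem of calculus then closes the argument.

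The first step is to plug in the uniform measure $\zeta$ explicitly; writing $N=|\mathcal{Z}|$, $\mathcal{Z}=\{\vz_1,\ldots,\vz_N\}$, and $\Psi_j:=\Psi(\vy,\vz_j;\vx_i)$, we have
\[
\widetilde{\Phi}_i(\vy,\mu)\;=\;\mu\log\Bigl(\tfrac{1}{N}\sum_{j=1}^{N} e^{\Psi_j/\mu}\Bigr).
\]
Differentiating in $\mu$ and introducing the Gibbs probabilities $p_j=e^{\Psi_j/\mu}\big/\sum_k e^{\Psi_k/\mu}$, a direct computation (already carried out in the expression of $\nabla_\mu\widetilde{\Phi}_i$ in Lemma~\ref{lem:smoothphi}(b)) yields
\[
\frac{\partial\widetilde{\Phi}_i}{\partial\mu}(\vy,\mu)\;=\;\log\Bigl(\tfrac{1}{N}\sum_{j=1}^{N}e^{\Psi_j/\mu}\Bigr)\;-\;\frac{1}{\mu}\sum_{j=1}^{N}p_j\,\Psi_j.
\]
The key algebraic observation is the standard Gibbs identity $\mu\log Z=\sum_j p_j\Psi_j+\mu H(p)$, where $Z=\sum_j e^{\Psi_j/\mu}$ and $H(p)=-\sum_j p_j\log p_j$ is the Shannon entropy of $p$; this identity follows by taking a $p$-average of $\Psi_j-\mu\log p_j=\mu\log Z$. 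Substituting it into the expression above causes the $\Psi_j$-dependent terms to cancel, leaving
\[
\frac{\partial\widetilde{\Phi}_i}{\partial\mu}(\vy,\mu)\;=\;H(p)-\log N.
\]

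Since $p$ is supported on at most $N$ atoms, $0\le H(p)\le\log N$, so $|\partial\widetilde{\Phi}_i/\partial\mu(\vy,\mu)|\le\log N=\log|\mathcal{Z}|\le\kappa/2\le\kappa$ for every $\mu>0$. The fundamental theorem of calculus then gives
\[
|\widetilde{\Phi}_i(\vy,\mu_1)-\widetilde{\Phi}_i(\vy,\mu_2)|
\;=\;\Bigl|\int_{\mu_2}^{\mu_1}\tfrac{\partial\widetilde{\Phi}_i}{\partial\mu}(\vy,\mu)\,d\mu\Bigr|
\;\le\;\kappa(\mu_1-\mu_2),
\]
as desired.

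The main obstacle is recognizing the entropic form of the $\mu$-derivative: without that reduction, bounding the two terms in the first expression of $\partial\widetilde{\Phi}_i/\partial\mu$ separately produces constants that involve $\max_j|\Psi_j|/\mu$ and blow up as $\mu\downarrow 0$. The hypotheses $\mu_1\le 1$ and $\mu_1>\mu_2$ do not appear to be used above; they are likely retained only for consistency with the continuous-$\mathcal{Z}$ counterpart (Lemma~\ref{lem:distancemu}) and to fix a sign for $\mu_1-\mu_2$. The factor $2$ in $\kappa\ge 2\log|\mathcal{Z}|$ gives convenient slack but is not tight: the same argument in fact yields the bound with constant $\log|\mathcal{Z}|$.
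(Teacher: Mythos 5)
Your proof is correct, and it takes a genuinely different route from the paper's. The paper works through the conjugate representation $\mu\,\omega_t(\vy/\mu)=\max_{\vx\in D_t}\{\langle\vx,\vy\rangle-\mu\,\omega_t^*(\vx)\}$ with $\omega_t^*$ the negative entropy on the simplex, bounds the difference of the two maxima by $(\mu_1-\mu_2)\max_{D_t}(-\omega_t^*)=(\mu_1-\mu_2)\log|\mathcal{Z}|$, and then handles the normalization term $\mu\log\tfrac{1}{|\mathcal{Z}|}$ separately by the triangle inequality, which contributes a second $\log|\mathcal{Z}|$ and produces the constant $2\log|\mathcal{Z}|$ in the statement. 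Your argument differentiates the full expression (normalization included) and uses the Gibbs identity to reveal that $\partial_\mu\widetilde{\Phi}_i=H(p)-\log|\mathcal{Z}|\in[-\log|\mathcal{Z}|,0]$; the entropy and the normalization partially cancel instead of adding, which is why you obtain the sharper constant $\log|\mathcal{Z}|$. The two proofs are secretly close --- by Danskin's theorem the $\mu$-derivative of the paper's max-representation is exactly $-\omega_t^*(\vx^*(\mu))=H(p)$, so your computation is the ``differentiated'' version of their envelope inequality --- but yours sees the cancellation and additionally shows the map $\mu\mapsto\widetilde{\Phi}_i(\vy,\mu)$ is nonincreasing, consistent with Lemma~\ref{lem:smoothphi}(b). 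One cosmetic point: Lemma~\ref{lem:smoothphi}(b) gives the formula for $\nabla_\mu\widetilde{\Phi}_i$ but the paper's Definition~\ref{def:smooth} only guarantees $C^1$ dependence on $\vy$; for finite $\mathcal{Z}$ the smoothness in $\mu>0$ you need for the fundamental theorem of calculus is immediate from the explicit finite-sum formula, so this is not a gap, but it is worth stating that you are using that observation rather than the definition of a smoothing function.
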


\begin{lemma}
	\label{lem:distancemu}
	Suppose Assumptions~\ref{ass:problemsetup}, \ref{ass:compact1} and~\ref{ass:lk} hold, and $\cZ$ is a connected compact set in
	$\mathbb{R}^{m_2}$ with diameter $D_{\cZ}$. Then, for any 
	$\vy\in \operatorname{dom}(\varphi)$, $1\ge \mu_1>\mu_2>0$, and all $\vx\in\cX$, it holds that 
	\[
	\left|
	\widetilde{\Phi}(\vy,\mu_1;\vx)
	-
	\widetilde{\Phi}(\vy,\mu_2;\vx)
	\right|
	\le
	\left[
	2l_\Psi
	+
	2m_2\log\!\left(\frac{1+2 D_{\cZ}}{\mu_1-\mu_2}\right)
	\right](\mu_1-\mu_2).
	\] 
\end{lemma}

Based on the LME function in~\eqref{eq:smoothphi}, we introduce a (nonconvex) ``smoothing'' problem of~\eqref{eq:minmax} as follows:
\begin{equation}
	\label{eq:smootho}
	\min_{\vy} \left\{\widetilde{g}(\vy, \mu) := \varphi(\vy) + \mathbb{E}_{\vx\sim\mathbb{P}} [\widetilde{\Phi}(\vy, \mu;\vx)]\right\}.
\end{equation}
{It is important to note that problem~\eqref{eq:smootho} corresponds precisely to the subproblem solved at each iteration of our proposed  SSPG method in Section~\ref{sec:conver2}, with the smoothing parameter $\mu$ dynamically decreasing toward zero as the iterations proceed.}

\begin{definition}[$\epsilon$-scaled stationary point \citep{bian2013worst,bian2015linearly}]\label{def:scaled-stat}
	{Let $\epsilon\in(0,1]$ and let $\widetilde g(\cdot,\mu)$ denote the 
    objective in \eqref{eq:smootho}.}
	A {random} point $\vy^*$ is called an $\epsilon$-scaled stationary point of problem~\eqref{eq:minmax} \emph{in expectation} if
	it holds $\vy^*\in\dom(\varphi)$ and \mbox{$\mathbb{E}[(\dist(\vzero, \partial\widetilde{g}( \vy^*,\mu)))^2]\leq \epsilon^2$} for some $0< \mu \leq \epsilon$. 
\end{definition}

Definition~\ref{def:scaled-stat} is particularly useful in smoothing methods as it quantifies stationarity for the smoothing problem \(\min_{\vy}\widetilde g(\vy,\mu)\). However, it does not directly characterize stationarity for the original   problem
\(\min_{\vy}g(\vy)\). Furthermore, providing nonasymptotic certification of Clarke stationarity remains generally infeasible. We
therefore directly work with the  \((\mu,\epsilon)\)-Goldstein
stationarity notion introduced in Definition~\ref{def:sta}, and establish in Lemma~\ref{lem:smoothed-to-goldstein} a
conversion from  scaled stationarity to Goldstein stationarity. The proof to Lemma~\ref{lem:smoothed-to-goldstein} is given in Appendix~\ref{sec:smoothed}.

For \(\mu\in(0,1]\), define the LME approximation gap
\[
\Delta_\mu
:=
\sup_{\vy\in\operatorname{dom}(\varphi)}
	\mathbb{E}_{\vx\sim P}
\left[
\Phi(\vy;\vx)-\widetilde\Phi(\vy,\mu;\vx)
\right].
\]
By Lemma~5(a, b), \(\widetilde\Phi(\vy,\mu;\vx)\uparrow \Phi(\vy;\vx)\) as
\(\mu\downarrow0\), and hence \(\Delta_\mu\ge0\). Moreover, we have
\begin{equation}
	\label{eq:omega}\Delta_\mu
	\le
	\omega(\mu)
	:=
	\begin{cases}
		2\mu\log|\cZ| , 
		& \text{if } \cZ \text{ is finite discrete,} \\[1.5mm]
		2\mu\left[
		l_\Psi
		+
		m_2\log\left(\dfrac{1+2D_\cZ}{\mu}\right)
		\right],
		& \text{if } \cZ\text{ is connected compact with diameter }D_\cZ,
	\end{cases}
\end{equation}
where the finite case follows from Lemma~\ref{lem:distancemufinite} by taking \(\mu_1=\mu\) and \(\mu_2\downarrow 0\), and the connected compact case follows from Lemma~\ref{lem:distancemu} by taking \(\mu_1=\mu\) and letting \(\mu_2\downarrow0\).

\begin{lemma} 
	\label{lem:smoothed-to-goldstein}
	Suppose Assumptions~\ref{ass:problemsetup}, \ref{ass:compact1}, and~\ref{ass:lk} hold.
	Let \(\epsilon\in(0,1]\), and suppose that the random point \(\vy^*\in\operatorname{dom}(\varphi)\)
	is an \(\epsilon\)-scaled stationary point of problem~\((P)\) in expectation with smoothing
	parameter \(\mu\in(0,\epsilon]\).  Then
	\(\vy^*\) is a \((\mu_g, \epsilon_g)\)-Goldstein stationary point of problem~\((P)\) in
	expectation, namely,
	$
	\mathbb{E}\left[
	\operatorname{dist}\bigl(\vzero,\partial^{\mu_g} g(\vy^*)\bigr)^2
	\right]
	\le \epsilon_g^2,
	$
	where 
	\[
	{\mu_g}
	:=
	\sqrt{\frac{2\omega(\mu)}{L_\Psi}} = \widetilde{O}(\mu^{1/2}),
	\qquad
	\epsilon_g
	:=
	\sqrt{2}\epsilon+2\sqrt{2L_\Psi\omega(\mu)} = \widetilde{O}(\epsilon+\mu^{1/2}).
	\]
\end{lemma}

	We end this section with an almost-sure convergence result, whose proof   is given in Appendix~\ref{sec:scaled}.

\begin{theorem}[Almost surely convergence to a directional stationary point]
	\label{lem:scaled}
	Suppose Assumptions~\ref{ass:problemsetup}, \ref{ass:compact1}, and~\ref{ass:lk} hold.
	Let $\{\epsilon_k\}$  
	and $\{\vy^{(k)}\}\subset \dom(\varphi)$ be given, such that \mbox{$\sum_{k=0}^{\infty}\epsilon^2_k<+\infty$} and $\vy^{(k)}$ is an $\epsilon_k$-scaled stationary point of problem~\eqref{eq:minmax} in expectation. 
	Then there exists $\{\mu_k\}$ such that $0<\mu_k\le \epsilon_k$ for all $k\ge 0$, and
	\begin{equation}
		\label{eq:almost}
		\lim_{k\rightarrow \infty}\dist(0, \partial\widetilde{g}( \vy^{{(k)}},\mu_k)) =0, \text{ almost surely}.
	\end{equation}
	Moreover, if a subsequence $\{\vy^{(j_k)}\}_{k\ge1}$ converges almost surely to $\vy^\star$, then $\vy^\star$ is a directional stationary point of problem~\eqref{eq:minmax} almost surely.   
\end{theorem}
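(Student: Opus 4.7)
The plan is to verify the two conclusions in turn. For the first, I would invoke Definition \ref{def:scaled-stat} to pick, for each $k$, a $\mu_k\in(0,\epsilon_k]$ with $\EE[\dist^2(0,\partial\widetilde{g}(\vy^{(k)},\mu_k))]\le\epsilon_k^2$. Since $\sum_k\epsilon_k^2<\infty$, Tonelli's theorem forces $\sum_k\dist^2(0,\partial\widetilde{g}(\vy^{(k)},\mu_k))<\infty$ almost surely, and therefore $\dist(0,\partial\widetilde{g}(\vy^{(k)},\mu_k))\to 0$ almost surely, which is \eqref{eq:almost}; note also $\mu_k\le\epsilon_k\to 0$.

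For the second claim I would fix a sample path on which $\vy^{(j_k)}\to\vy^*$ and $\dist(0,\partial\widetilde{g}(\vy^{(j_k)},\mu_{j_k}))\to 0$. Because each $\widetilde{\Phi}_i(\cdot,\mu)$ is continuously differentiable by Lemma \ref{lem:smoothphi}(b) and $\varphi$ is closed convex, the Clarke sum rule gives $\partial\widetilde{g}(\vy,\mu)=\partial\varphi(\vy)+\frac{1}{n}\sum_{i=1}^n\nabla_{\vy}\widetilde{\Phi}_i(\vy,\mu)$, so I can pick $\vgamma^{(j_k)}\in\partial\varphi(\vy^{(j_k)})$ with $\vgamma^{(j_k)}+\frac{1}{n}\sum_{i}\nabla_{\vy}\widetilde{\Phi}_i(\vy^{(j_k)},\mu_{j_k})\to 0$. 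The uniform bound $\|\nabla_{\vy}\widetilde{\Phi}_i\|\le l_{\Psi}$ from Lemma \ref{lem:smoothphi}(b) then forces $\{\vgamma^{(j_k)}\}$ to be bounded, and a further subsequence yields limits $\vgamma^*$ and $\vg_i^*$ satisfying $\vgamma^*+\frac{1}{n}\sum_i\vg_i^*=0$; outer semicontinuity of the convex subdifferential of $\varphi$ places $\vgamma^*\in\partial\varphi(\vy^*)$.

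The crux---and the main obstacle---is the gradient-consistency step showing each $\vg_i^*\in\partial\Phi_i(\vy^*)$. Using the explicit formula in Lemma \ref{lem:smoothphi}(b), I would rewrite $\nabla_{\vy}\widetilde{\Phi}_i(\vy^{(j_k)},\mu_{j_k})$ as the expectation of $\nabla_{\vy}\Psi(\vy^{(j_k)},\cdot;\vx_i)$ against the Boltzmann probability measure $\zeta_{i,k}(\mathrm{d}\vz)\propto e^{\Psi(\vy^{(j_k)},\vz;\vx_i)/\mu_{j_k}}\zeta(\mathrm{d}\vz)$ on $\mathrm{supp}(\zeta)$. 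Compactness of $\mathrm{supp}(\zeta)$ supplies a weak-$*$ accumulation point $\zeta_i^*$; a Laplace-type concentration argument exploiting $\mu_{j_k}\downarrow 0$, continuity of $\Psi(\cdot,\cdot;\vx_i)$, and Assumption \ref{def:zeta} shows that $\zeta_i^*$ is supported on $\argmax_{\vz\in\cZ}\Psi(\vy^*,\vz;\vx_i)$. Combined with the joint continuity of $\nabla_{\vy}\Psi$, this identifies $\vg_i^*$ as a convex combination of vectors $\nabla_{\vy}\Psi(\vy^*,\bar\vz;\vx_i)$ with $\bar\vz\in\argmax_{\vz\in\cZ}\Psi(\vy^*,\vz;\vx_i)$; Lemma \ref{lem:direc} together with the Clarke-regularity argument underlying Lemma \ref{lem:regularcondi} identifies this convex hull with $\partial\Phi_i(\vy^*)$. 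Hence $0\in\partial\varphi(\vy^*)+\frac{1}{n}\sum_i\partial\Phi_i(\vy^*)\subseteq\partial g(\vy^*)$, so $\vy^*$ is Clarke stationary for $g$, and Corollary \ref{cor:gradientconssforg} upgrades this to directional stationarity, completing the proof.
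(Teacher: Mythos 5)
Your proposal is correct, and both halves reach the paper's conclusions, but the crux step is handled by a genuinely different route. For \eqref{eq:almost} you use Tonelli (summable expectations $\Rightarrow$ a.s.\ summable $\Rightarrow$ a.s.\ convergence to zero), whereas the paper uses Markov's inequality plus Borel--Cantelli; your version is shorter and equally rigorous. For the second claim, the paper establishes gradient consistency abstractly: it argues continuity of $\nabla_{\vy}\widetilde{\Phi}_i$ in $(\vy,\mu)$, invokes \cite[Relation (23)]{chen2012smoothing} together with the Clarke regularity of $g-\varphi$ from Lemma~\ref{lem:regularcondi} to place the limiting gradient in $\partial(g-\varphi)(\vy^*)$, and then applies the regular sum rule with $\partial\varphi$. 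You instead prove gradient consistency from scratch for the log-sum-exp smoother: writing $\nabla_{\vy}\widetilde{\Phi}_i$ as an average of $\nabla_{\vy}\Psi$ against a Boltzmann measure, using a Laplace concentration argument as $\mu_{j_k}\downarrow 0$ to show the limiting measure sits on $\argmax_{\vz\in\cZ}\Psi(\vy^*,\vz;\vx_i)$, and identifying the resulting convex hull with $\partial\Phi_i(\vy^*)$ via Lemma~\ref{lem:direc} and Clarke regularity (a Danskin-type identification). This buys a self-contained argument with a sharper per-term conclusion ($\vg_i^*\in\partial\Phi_i(\vy^*)$ for each $i$, not merely the aggregate inclusion), at the cost of having to carry out the weak-$*$ compactness and concentration details that the citation to \cite{chen2012smoothing} packages away; note that your final inclusion $\partial\varphi(\vy^*)+\frac{1}{n}\sum_i\partial\Phi_i(\vy^*)\subseteq\partial g(\vy^*)$ is the reverse of the generic Clarke sum rule and is valid only because every summand is Clarke regular, which is exactly the same regularity hypothesis the paper leans on.
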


\subsection{The SSPG method}
\label{sec:conver2}
{In this subsection, we present a stochastic smoothing proximal gradient (SSPG) method for solving  problem~\eqref{eq:minmax}. At iteration~$k$, the SSPG method approximately solves the smoothed subproblem~\eqref{eq:smootho} with smoothing parameter $\mu=\mu_k$ via a single projected stochastic proximal gradient step. 
	The smoothing parameter is then updated  according to a predefined nonincreasing rule. Specifically, at iteration~$k$, we sample $M_k$ independent and identically distributed points $\{\vx_{k_j}\}_{j=1}^{M_k}$ from the distribution $\mathbb{P}$. Under standard bounded-variance assumptions, stochastic methods typically approximate the gradient $\mathbb{E}_{\vx\sim\mathbb{P}}[\nabla_{\vy}\widetilde{\Phi}(\vy^{(k)},\mu_k;\vx)]$ using the sample average
	$\frac{1}{M_k}\sum_{j=1}^{M_k}\nabla_{\vy}\widetilde{\Phi}(\vy^{(k)},\mu_k;\vx_{k_j})$.
	However, as  {the proof of} Lemma~\ref{lem:smoothphi}(b) reveals, exact evaluation of $\nabla_{\vy}\widetilde{\Phi}$ generally involves nested expectations and can thus be computationally prohibitive. To address this issue, 
	we instead assume access to gradient estimators $\mathcal{G}_{k_j}(\cdot,\cdot)$ approximating $\nabla_{\vy}\widetilde{\Phi}(\vy^{(k)},\mu_k;\vx_{k_j})$ for all $j=1,2,\ldots,M_k$. Specifically, we require 
}
\begin{equation}
	\label{eq:gradinetbias2}
	\mathbb{E} \left[\|\mathcal{G}_{k_j}(\vy^{(k)},\mu_{k}) - \nabla_{\vy} \widetilde{{\Phi}}( \vy^{(k)},\mu_{k};\vx_{k_j})\|^2\right]\leq \widehat{\epsilon}_k^2,
\end{equation}
for some scalar $\widehat{\epsilon}_k\geq 0$ and all $j=1,2,\ldots,M_k$.
Specific constructions of these estimators are provided in Remarks~\ref{rem:exact}--\ref{rem:exact2}. The numerical experiments presented in this paper adopt the techniques described in Remark~\ref{rem:exact}.
The $M_k$ gradient estimates are then aggregated into a stochastic gradient estimator
\begin{align}
	\label{eq:gradinetg}
	\mathcal{G}(\vy^{(k)},\mu_{k}) = \frac{1}{M_k} \sum_{j=1}^{M_k}\mathcal{G}_{k_j}(\vy^{(k)},\mu_{k})
\end{align}
of $\mathbb{E}_{\vx\sim\mathbb{P}}[\nabla_{\vy}\widetilde{\Phi}(\vy^{(k)},\mu_k;\vx)]$.
We proceed with a proximal gradient update step, followed by updating the value of \( \mu \) according to a predefined nonincreasing rule. 
The SSPG method is summarized in Algorithm~\ref{alg:spg2}.

\begin{algorithm}[t]
	\caption{A \textbf{s}tochastic \textbf{s}moothing \textbf{p}roximal \textbf{g}radient (SSPG) method for solving~\eqref{eq:minmax}
	}
	\label{alg:spg2}
	\begin{algorithmic}[1]
		\State{Choose $\vy^{(0)}\in\operatorname{dom}(\varphi)$, $\mu_0>0$, target estimator accuracies $\{\widehat\epsilon_k\}_{k=0}^{K-1}\subset(0,\infty)$, and an iteration budget $K$.}
		\For{{$k=0,1,\ldots, K-1$} }
		\State\label{line323}
		Choose a stepsize $\alpha_k>0$ and form $\mathcal{G}(\vy^{(k)},\mu_k)$ as in~\eqref{eq:gradinetg}. 
		Update $\vy$ by 
		\begin{equation}
			\label{eq:updatex2}
			\vy^{(k+1)} = \operatorname{Prox}_{\alpha_k\varphi} \left(\vy^{(k)}- {\alpha_k}{} \mathcal{G}(\vy^{(k)},\mu_{k}) \right).
		\end{equation}

		\State\label{line423}
		Choose $0<\mu_{k+1}\le \mu_k$.
		\EndFor
	\end{algorithmic}
\end{algorithm}

\begin{remark}[Derive the estimator $\cG$ via sampling methods]
	\label{rem:exact}
    We briefly discuss several ways to satisfy
	\eqref{eq:gradinetbias2}.
	First, if  \(\mathcal{Z}\) is finite, then
	\(\nabla_{\vy}\widetilde\Phi(\vy^{(k)},\mu_k;\vx)\) can be evaluated
	exactly, and one may take \(\widehat\epsilon_k=0\). 
	This situation is common in WDRO models where a discrete grid is used to approximate the entire sample space
    \citep{xu2018distributionally,chen2021decomposition,liu2021discrete,pflug2007ambiguity}. 
	Second,  {by the proof of Lemma~\ref{lem:smoothphi}(b)}, if for almost everywhere $\vx\sim\mathbb{P}$, the expectation \(\mathbb{E}_{\vz\sim \zeta}[e^{\Psi(\vy, \cdot; \vx)/\mu}]\) can be computed, 
	we can generate samples to approximate the expectation  \(\mathbb{E}_{\vz\sim \zeta}[e^{\Psi(\vy, \cdot; \vx)/\mu}\nabla_{\vy}\Psi(\vy, \cdot; \vx)]\). By doing so, we can generate an unbiased stochastic gradient estimator of \(\nabla_{\vy} \widetilde{\Phi}(\vy^{(k)}, \mu_{k};\vx)\). 
	Lastly, when  \(\mathcal{Z}\)  is a connected compact set, 
    under a flat maximum condition, we can efficiently construct a suitable stochastic gradient estimator. We detail this construction in Appendix~\ref{app:localized-small-ball-centered}. 
\end{remark}

\begin{remark}[Derive the estimator $\cG$ via Langevin Monte Carlo methods]  
	\label{rem:exact2}  
	Recent progress on non-log-concave sampling, especially the stationarity-based theory of~\citep{balasubramanian2022towards}, has shown that Langevin-type methods can be analyzed beyond the   strongly log-concave regime through optimization-inspired stationarity notions. This viewpoint is particularly relevant here, as shown below. Our main convergence theory does not rely on an LMC sampler, but this connection suggests a principled route for constructing inner gradient estimators in smooth nonconvex regimes.
	
	Fix an iteration $k$ and an index $j$, and abbreviate
	$\vx_j:=\vx_{k_j}$, $\vy:=\vy^{(k)}$, and $\mu:=\mu_k$. 
	Our goal is to construct a stochastic gradient estimator $\cG_j(\vy,\mu)$ such that 
	\begin{equation}
		\label{eq:gradinetbias3}
		\EE\left[\left\|\,\cG_j(\vy,\mu)
		-\nabla_{\vy}\left(\mu\log\EE_{\vz\sim\zeta}[e^{\Psi(\vy,\vz;\vx_j)/\mu}]\right)\right\|_2^2\right]\;\le\;\epsilon^2.
	\end{equation} 
	As an alternative approach, we rewrite $\nabla_{\vy} \widetilde\Phi(\vy,\mu;\vx_j)=\nabla_{\vy} [ \mu \log \mathbb{E}_{\vz\sim \zeta} [e^{{\Psi(\vy,\vz;\vx_j)}/{\mu} }] ] $ as
	\begin{align*}
		&\nabla_\vy\widetilde\Phi(\vy,\mu;\vx_j)
	=\EE_{\vz\sim\zeta^{(\Phi)}}\big[\nabla_\vy\Psi(\vy,\vz;\vx_j)\big], 
	\text{ where }\\
	&\frac{d\zeta^{(\Phi)}}{d\zeta}(\vz)
	=\frac{\exp(\Psi(\vy,\vz;\vx_j)/\mu)}{\EE_{\vu\sim\zeta}\exp(\Psi(\vy,\vu;\vx_j)/\mu)}\propto
	\exp\Big(\frac{\Psi(\vy,\vz;\vx_j)}{\mu}\Big).
	\end{align*}
	Thus, condition \eqref{eq:gradinetbias3} can be satisfied by approximately sampling from $\zeta^{(\Phi)}$; specifically, 
	we draw samples $\{\vz_1,\ldots,\vz_M\}\sim\widehat\pi_k$ (a sampling distribution that approximates  $\zeta^{(\Phi)}$) 
	and form the Monte Carlo gradient estimator 
	$
	\cG_j(\vy,\mu):=\frac{1}{M}\sum_{i=1}^M\nabla_\vy\Psi(\vy,\vz_i;\vx_j).
	$
	The left hand side of \eqref{eq:gradinetbias3} can then be bounded by 
	\begin{equation}
		\label{eq:gradinetbias5}
		\frac{\mathrm{Var}_{\vz\sim\zeta^{(\Phi)}}(\nabla_\vy \Psi(\vy,\vz;\vx_j))}{M}
		+\big\|\EE_{\vz\sim\widehat\pi_k}[\nabla_\vy \Psi(\vy,\vz;\vx_j)]-\EE_{\vz\sim\zeta^{(\Phi)}}[\nabla_\vy \Psi(\vy,\vz;\vx_j)]\big\|_2^2,
	\end{equation}
	which splits into Monte Carlo variance and a sampling bias. 
	Therefore, the central task reduces to approximate sampling from the distribution $\zeta^{(\Phi)}$. This problem aligns closely with the active research area known as Langevin diffusion~\citep{chewi2023optimization,wang2025iterative,ding2021random}.
	
	When $\Psi(\vy,\cdot;\vx_j)$ is differentiable in $\vz$ and $\cZ$ is a compact convex set, 
	one can employ standard Langevin Monte Carlo (LMC)~\citep{chewi2023optimization} to approximately generate samples $\{\vz_i\}_{i=1}^M$ from $\zeta^{(\Phi)}$ using the projected unadjusted Langevin algorithm:
	$$
	\vz_{t}=\mathrm{Proj}_{\cZ}\left(\vz_{t-1}+\alpha \nabla_{\vz}\Psi(\vy,\vz_{t-1};\vx_j)/\mu +\sqrt{2\alpha}\,\xi_t\right)
	$$
	for all $t\in[M]$, where $\vz_0\in\cZ$ is an initial point, $\alpha>0$ is a step size, and $\xi_t$ satisfies the normal distribution. 
	Classical theoretical guarantees for LMC typically assume $\cZ=\RR^{m_2}$, and strong log-concavity of the target measure, e.g., 
	$\Psi(\vy,\cdot;\vx_j)$ is strongly concave and smooth, so the measure $\exp(\Psi(\vy,\cdot;\vx_j)/\mu)$ is strongly log-concave~\citep{chewi2023optimization}.  Beyond this   scenario, two notable extensions have been established:  (i) In the  {nonsmooth convex} case,  \citet{durmus2019analysis} relax smoothness conditions but still maintain convexity assumptions. (ii) In the nonconvex smooth case, analyses by \citet{vempala2019rapid} and \citet{chewi2024analysis} rely on the Log-Sobolev Inequality (LSI)~\citep[Definition 2.2.7]{chewi2023optimization}, a condition serving as the sampling analogue to the PL condition commonly used in the optimization field. 
	While the nonsmooth convex extension (i) is already encompassed by Remark~\ref{rem:exact}, the LSI-based nonconvex smooth extension (ii) presents a fundamentally distinct scenario that Remark~\ref{rem:exact} does not cover.
	Indeed, LSI is strictly more general than strong log-concavity and does not imply convexity or PL condition satisfaction. For example, consider the distribution
	$
	\frac{d\zeta^{(\Phi)}}{d\zeta}(\vz)\propto \exp\left(\Psi(\vy,\vz;\vx)/\mu\right) 
	$
	with $\Psi(\vy,\vz;\vx) = -\frac{1}{2}\|\vz\|_2^2-\gamma\sin\left(\textbf{1}_{m_2}\zz\vz\right) $ and $\mu=1,$  
	which satisfies LSI for any $\gamma>1$ (as a bounded perturbation of a Gaussian) but whose potential  $\Psi/\mu= -\frac{1}{2}\|\vz\|_2^2-\gamma\sin(\textbf{1}_{m_2}\zz\vz)$ is neither concave nor satisfies the PL condition whenever $\gamma>1$ and $m_2>1$. Thus, LSI and PL conditions capture distinct structural properties and are not directly comparable.
	In summary, although employing LMC to construct stochastic gradient estimators currently lacks established theoretical results in the nonsmooth nonconvex setting and thus remains theoretically less understood compared to the optimization-based estimators discussed in Remark~\ref{rem:exact}, it has the advantage of supporting smooth nonconvex scenarios under broader conditions such as the LSI, which are not explicitly addressed by the optimization-based methods. 
\end{remark}

\subsection{Convergence Results}\label{sec:conver3}

The following lemma will be used for establishing the convergence results of Algorithm~\ref{alg:spg2}.  
Its proof is given in Appendix~\ref{sec:alg3}.  

\begin{lemma}\label{lem:alg3}
	Suppose Assumptions~\ref{ass:problemsetup}, \ref{ass:compact1}, and~\ref{ass:lk} hold, and~\eqref{eq:gradinetbias2} is satisfied for each iteration.
	Let $\{ \vy^{(k)}\}$ and $\{\mu_k\}$ be the sequences generated by Algorithm~\ref{alg:spg2} with $M_k= \lceil 4l_{\Psi}^2 \widehat{\epsilon}_k^{-2}\rceil$,  and $\alpha_k=  \frac{1}{L^{(k)}}$  for all $k\in[K]$, where $C_2=L_{\Psi}\mu_0+l^2_{\Psi}$, and $L^{(k)}=  \frac{C_2}{ \mu_k}$.
	Then, {for all \mbox{$k\in[K-1]$},} the following statements hold. 
	\begin{enumerate}
		\item[\textnormal{(a)}] 
	 	The sequence \(\left\{ \widetilde{g}( \vy^{(k)},\mu_k)\right\}\) satisfies 
		\begin{equation}
			\label{eq:funcgap2}
			\mathbb{E}\left[\widetilde{g}( \vy^{(k+1)},\mu_k)- \widetilde{g}( \vy^{(k)},\mu_k)\right]   \leq 
			-\frac{L^{(k)}}{4} \mathbb{E} \left[\| \vy^{(k+1)}- \vy^{(k)}\|^2\right] + \frac{4}{ L^{(k)}}\widehat{\epsilon}_k^2.
		\end{equation}
		\item[\textnormal{(b)}] It holds that 
		\begin{equation}\label{eq:kktregu2}		
			\begin{aligned}
				\mathbb{E}\left[\left(\dist\left(\vzero, \partial\widetilde{g}( \vy^{(k+1)},\mu_k)\right)\right)^2\right]  
				\leq 
				18 L^{(k)} \mathbb{E} \left[\widetilde{g}( \vy^{(k)},\mu_k)- \widetilde{g}( \vy^{(k+1)},\mu_k)\right] +  {112}{}\widehat{\epsilon}_k^2.
			\end{aligned}
		\end{equation}
	\end{enumerate}
\end{lemma}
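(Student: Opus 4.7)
My plan is to treat \eqref{eq:funcgap2} and \eqref{eq:kktregu2} as a standard stochastic proximal-gradient analysis of the composite objective $\widetilde g(\cdot,\mu_k)=F(\cdot,\mu_k)+\varphi$, where $F(\vy,\mu):=\frac{1}{n}\sum_{i=1}^n\widetilde\Phi_i(\vy,\mu)$. By Lemma~\ref{lem:smoothphi}(c) the gradient $\nabla_\vy F(\cdot,\mu_k)$ is Lipschitz with constant $L_\Psi+2l_\Psi^2/\mu_k$; since $\mu_k\le\mu_0$ this is bounded by $C_2/\mu_k=1/\alpha_k$, which is the value of $L^{(k)}$ used in the statement. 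A preliminary step bounds the aggregated-gradient error: writing $\mathcal{G}(\vy^{(k)},\mu_k)-\nabla F(\vy^{(k)},\mu_k)$ as the sum of a per-sample noise term $\frac{1}{m}\sum_j[\mathcal{G}_{i_j}-\nabla_\vy\widetilde\Phi_{i_j}]$ and a sampling-without-replacement fluctuation $\frac{1}{m}\sum_j\nabla_\vy\widetilde\Phi_{i_j}-\nabla F$, applying \eqref{eq:gradinetbias2} to the first and the uniform bound $\|\nabla_\vy\widetilde\Phi_i\|\le l_\Psi$ of Lemma~\ref{lem:smoothphi}(b) to the second, the choice $m\ge\lceil 4l_\Psi^2\widehat\epsilon_k^{-2}\rceil$ balances the two and yields $\mathbb{E}\|\mathcal{G}(\vy^{(k)},\mu_k)-\nabla F(\vy^{(k)},\mu_k)\|^2\le 4\widehat\epsilon_k^2$.

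For part~(a), I combine the descent lemma for the smooth part $F(\cdot,\mu_k)$ with the first-order inequality $\varphi(\vy^{(k+1)})-\varphi(\vy^{(k)})\le\langle\vv,\vy^{(k+1)}-\vy^{(k)}\rangle$, where $\vv=-\mathcal{G}(\vy^{(k)},\mu_k)-(\vy^{(k+1)}-\vy^{(k)})/\alpha_k\in\partial\varphi(\vy^{(k+1)})$ is the subgradient supplied by the prox-step optimality condition. Summing these and using $L^{(k)}_{\mathrm{smooth}}/2\le 1/(2\alpha_k)$ gives
\[
\widetilde g(\vy^{(k+1)},\mu_k)-\widetilde g(\vy^{(k)},\mu_k)\le\langle\nabla F(\vy^{(k)},\mu_k)-\mathcal{G}(\vy^{(k)},\mu_k),\vy^{(k+1)}-\vy^{(k)}\rangle-\tfrac{L^{(k)}}{2}\|\vy^{(k+1)}-\vy^{(k)}\|^2.
\]
Applying Young's inequality to the inner product with a weight chosen so the residual quadratic coefficient equals $L^{(k)}/4$, then taking expectation and using the MSE bound above, yields~\eqref{eq:funcgap2}.

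For part~(b), the same optimality condition identifies the element $\nabla F(\vy^{(k+1)},\mu_k)+\vv\in\partial\widetilde g(\vy^{(k+1)},\mu_k)$, which decomposes as
\[
[\nabla F(\vy^{(k+1)},\mu_k)-\nabla F(\vy^{(k)},\mu_k)]+[\nabla F(\vy^{(k)},\mu_k)-\mathcal{G}(\vy^{(k)},\mu_k)]-(\vy^{(k+1)}-\vy^{(k)})/\alpha_k.
\]
I bound the first and third brackets together by $2L^{(k)}\|\vy^{(k+1)}-\vy^{(k)}\|$ using Lemma~\ref{lem:smoothphi}(c) and the stepsize rule, then apply a weighted Young's inequality $\|X+Y\|^2\le(1+\tau)\|X\|^2+(1+\tau^{-1})\|Y\|^2$ with a suitable $\tau$ to peel off the middle (stochastic) piece. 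After taking expectation I substitute the descent estimate
\[
\mathbb{E}\|\vy^{(k+1)}-\vy^{(k)}\|^2\le\tfrac{4}{L^{(k)}}\mathbb{E}[\widetilde g(\vy^{(k)},\mu_k)-\widetilde g(\vy^{(k+1)},\mu_k)]+\tfrac{16\widehat\epsilon_k^2}{(L^{(k)})^2}
\]
read off from~(a) to obtain~\eqref{eq:kktregu2}.

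The main obstacle is the tight Young's-inequality bookkeeping required to hit the stated numerical constants $18L^{(k)}$ and $112\widehat\epsilon_k^2$: the Young's weight $\tau$ in~(b) must be chosen so that, after the substitution of~(a), the $(L^{(k)})^2\,\mathbb{E}\|\vy^{(k+1)}-\vy^{(k)}\|^2$ deterministic term collapses to exactly $18L^{(k)}\,\mathbb{E}[\widetilde g(\vy^{(k)},\mu_k)-\widetilde g(\vy^{(k+1)},\mu_k)]$ while the residual $\widehat\epsilon_k^2$ pieces from the quadratic, the substitution, and the stochastic term sum to~$112$. A secondary technical concern is the statistical coupling between the mini-batch indices and the per-sample stochastic gradients, which forces the MSE bound on $\mathcal{G}$ to rely on a Cauchy--Schwarz estimate for the noise sum rather than an independence-based variance-reduction formula.
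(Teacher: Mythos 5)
Your proposal follows essentially the same route as the paper's proof: the same two-part decomposition of the aggregated gradient error (per-sample noise controlled via \eqref{eq:gradinetbias2} plus a without-replacement sampling fluctuation whose variance scales like $l_{\Psi}^2/m$), the same descent-lemma-plus-prox-optimality-plus-Young argument for part (a), and the same subgradient identification with a weighted Young's inequality and back-substitution of (a) for part (b). The only detail worth adding is that $m=\min\{\lceil 4l_{\Psi}^2\widehat{\epsilon}_k^{-2}\rceil,n\}$ may equal $n$, in which case the sampling-fluctuation term vanishes identically (the finite-population correction factor $\tfrac{n-m}{n-1}$ is zero) rather than being controlled by the $1/m$ scaling, so the bound $\mathbb{E}\|\mathcal{G}(\vy^{(k)},\mu_k)-\nabla F(\vy^{(k)},\mu_k)\|^2\le 4\widehat{\epsilon}_k^2$ holds in both cases.
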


Now we are ready to present the main convergence rate result.  
The proof  is given in Appendix~\ref{sec:scaled2}.

\begin{theorem}[Stationarity violation bound]
	\label{thm:scaled2}
	Suppose Assumptions~\ref{ass:problemsetup}, \ref{ass:compact1}, and~\ref{ass:lk} hold, and~\eqref{eq:gradinetbias2} is satisfied for each iteration. Let $0<\epsilon<1$,  and $K> k_1 \ge 0$ be given. Set $M_k= \lceil 4l_{\Psi}^2 \widehat{\epsilon}_k^{-2}\rceil$, and  $\alpha_k=  \frac{\mu_k}{C_2}$ for all $k\in[K]$, where $C_2=L_{\Psi}\mu_0+2l^2_{\Psi}$.
	Let~$\tau$ be randomly sampled from $\{k_1,k_1+1,\ldots,K-1\}$ with probability  $\operatorname{Prob}(\tau=k)=\frac{\mu_k}{\sum_{t=k_1}^{K-1}\mu_t}$. 
	Then, 
	\begin{align}
		\notag
		&
		\mathbb{E}\left[\left(\dist\left(0, \partial\widetilde{g}( \vy^{(\tau+1)},\mu_{\tau})\right)\right)^2\right]
		\\\leq &  
		\frac{18 C_2 \mathbb{E} \left[\widetilde{g}( \vy^{(k_1)},\mu_{k_1})- \widetilde{g}( \vy^{(K)},\mu_{K-1}) \right] }{\sum_{k=k_1}^{K-1} {\mu_k}} + \frac{18 C_2 \sum_{k=k_1}^{K-2} \omega\left(\mu_{k}- \mu_{k+1}\right),  }{\sum_{k=k_1}^{K-1} {\mu_k}} +  \frac{112\sum_{k=k_1}^{K-1} { \mu_k}{ } \widehat{\epsilon}_k^2}{\sum_{k=k_1}^{K-1} {\mu_k}},
		\label{eq:expectationsta}
	\end{align}
	where $\omega(\cdot)$ is given in~\eqref{eq:omega}.
\end{theorem}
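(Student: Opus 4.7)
The plan is to combine the per-iteration descent-plus-stationarity bound from Lemma~\ref{lem:alg3}(b) with the prescribed sampling weight $\mu_k$ so that the $L^{(k)}$ factor is absorbed into a dimensionless constant, then sum over $k$ and handle the resulting non-telescoping discrepancy via Lemma~\ref{lem:distancemu}.

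First, I would identify $L^{(k)}$ (the Lipschitz constant of $\nabla_{\vy}\widetilde{g}(\cdot,\mu_k)$) with the bound $C_2/\mu_k$: by Lemma~\ref{lem:smoothphi}(c) the Lipschitz modulus equals $L_\Psi + 2l_\Psi^2/\mu_k$, which, using $\mu_k\le\mu_0$, is at most $(L_\Psi\mu_0+2l_\Psi^2)/\mu_k = C_2/\mu_k$. Consequently $\alpha_k = \mu_k/C_2 = 1/L^{(k)}$, matching the hypothesis of Lemma~\ref{lem:alg3}. Multiplying \eqref{eq:kktregu2} by $\mu_k$ and summing over $k=k_1,\ldots,K-1$ gives
\begin{equation*}
\sum_{k=k_1}^{K-1}\mu_k\,\mathbb{E}\!\left[\bigl(\dist(0,\partial\widetilde{g}(\vy^{(k+1)},\mu_k))\bigr)^2\right]
\le 18 C_2 \sum_{k=k_1}^{K-1}\mathbb{E}\!\left[\widetilde{g}(\vy^{(k)},\mu_k)-\widetilde{g}(\vy^{(k+1)},\mu_k)\right]
+ 112\sum_{k=k_1}^{K-1}\mu_k\widehat{\epsilon}_k^2.
\end{equation*}

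The main obstacle is that the $\widetilde g$-sum on the right is \emph{not} a true telescoping sum because the smoothing parameter varies with $k$. I would rewrite it as
\begin{equation*}
\sum_{k=k_1}^{K-1}\!\left[\widetilde g(\vy^{(k)},\mu_k)-\widetilde g(\vy^{(k+1)},\mu_k)\right]
= \widetilde g(\vy^{(k_1)},\mu_{k_1})-\widetilde g(\vy^{(K)},\mu_{K-1})
+ \sum_{k=k_1}^{K-2}\!\left[\widetilde g(\vy^{(k+1)},\mu_{k+1})-\widetilde g(\vy^{(k+1)},\mu_k)\right],
\end{equation*}
and then bound each mismatch term. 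Applying Lemma~\ref{lem:distancemu} to every $\widetilde\Phi_i$ and averaging over $i\in[n]$ yields
\begin{equation*}
\widetilde g(\vy^{(k+1)},\mu_{k+1})-\widetilde g(\vy^{(k+1)},\mu_k)\le \tfrac{C_0}{\mu_{k+1}}(\mu_k-\mu_{k+1}),
\end{equation*}
provided the schedule satisfies $\tfrac12\mu_k\le\mu_{k+1}\le\mu_k$ (I would flag this as the tacit assumption on $\{\mu_k\}$, since otherwise one iterates Lemma~\ref{lem:distancemu} over a refined subdivision, or invokes Lemma~\ref{lem:distancemufinite} in the discrete case, which gives an even cleaner bound).

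Finally, by the definition of $\tau$ with $\operatorname{Prob}(\tau=k)=\mu_k/\sum_{t=k_1}^{K-1}\mu_t$, the weighted sum on the left divided by $\sum_{k=k_1}^{K-1}\mu_k$ is exactly $\mathbb{E}[(\dist(0,\partial\widetilde g(\vy^{(\tau+1)},\mu_\tau)))^2]$. Combining the three pieces and dividing through by $\sum_{k=k_1}^{K-1}\mu_k$ delivers precisely \eqref{eq:expectationsta}. The whole argument is a short chain of substitutions once the telescoping-with-remainder identity and the weight choice $\mu_k$ are in place; the only delicate point is the rate at which $\mu_k$ is allowed to decrease so that Lemma~\ref{lem:distancemu} applies term-by-term.
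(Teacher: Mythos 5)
Your proof follows essentially the same route as the paper's: sum the weighted form of \eqref{eq:kktregu2}, decompose the non-telescoping $\widetilde g$-sum into a boundary term plus mismatch terms, bound the mismatches via Lemma~\ref{lem:distancemu}, and convert the $\mu_k$-weighted average into the expectation over $\tau$. Your explicit flag that Lemma~\ref{lem:distancemu} requires $\tfrac12\mu_k\le\mu_{k+1}\le\mu_k$ is a valid observation about a condition the paper leaves tacit, but it does not change the argument.
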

  
There are multiple strategies for selecting \(\widehat{\epsilon}_k\) and \(\mu_k\). 
Typically, we set $\widehat{\epsilon}_k$ 
in the same order as $\epsilon$. For~$\mu_k$, it can 
be kept constant, decay 
over 
$k$, or be updated based on the difference between the objective function values at two consecutive iterations~\citep{liu2025stochastic}.
Below, we present a corollary detailing specific choices for \(\widehat{\epsilon}_k\) and \(\mu_k\), along with an analysis of the computational complexity required by 
Algorithm~\ref{alg:spg2}.
The proof is given in Appendix~\ref{sec:cor}.

\begin{corollary}
	\label{cor:main1}
	Under the same assumptions of Theorem~\ref{thm:scaled2}, the following
	two claims hold.
	
	\begin{enumerate}
		\item[\textnormal{(a)}]
		Set \(k_1=0\), \(\widehat{\epsilon}_k=\epsilon/16\), and
		\(\mu_k=\epsilon\) for all \(k\in[K]\), with
		\[
		K=
		\left\lceil
		36C_2\epsilon^{-3}
		\left(
		\widetilde g(\vy^{(0)},\epsilon)
		-
		\min_{\vy}\widetilde g(\vy,\epsilon)
		\right)
		\right\rceil .
		\]
		Then Algorithm~\ref{alg:spg2} outputs an $\epsilon$-scaled stationary point
		\(\vy^{(k+1)}\), for some \(0\le k<K\), in expectation.
		
		\item[\textnormal{(b)}]
		Let \(0<\mu_g, \epsilon_g \le  1\).
		Set \(k_1=0\),
		$
		\widehat{\epsilon}_k=\frac{\epsilon_g}{16},
		\mu_k=\mu_g^2
		\text{ for all } k\in[K],
		$
		with
		\[
		K=
		\left\lceil
		36C_2\epsilon_g^{-2}\mu_g^{-2}
		\left(
		\widetilde g(\vy^{(0)},\mu_g^2)
		-
		\min_{\vy}\widetilde g(\vy,\mu_g^2)
		\right)
		\right\rceil .
		\]
		Then Algorithm~\ref{alg:spg2} outputs a 
		\(( \sqrt{\frac{2\omega(\mu_g^2)}{L_\Psi}},\sqrt{2}\epsilon_g +  2\sqrt{2L_\Psi\omega(\mu_g^2)})\)-Goldstein stationary point 
		\(\vy^{(k+1)}\), for some \mbox{\(0\le k<K\),}  in expectation. Here, $\omega(\cdot)$ is given in~\eqref{eq:omega}.
	\end{enumerate}
\end{corollary}

 \begin{remark}[An explicit bound on $K$] 
 	Recall that $\widetilde g(\vy,\mu)=\varphi(\vy)+\mathbb{E}_{\vx\sim\mathbb{P}} [\widetilde\Phi(\vy,\mu; \vx)]$, where the smoothing terms $\widetilde\Phi(\cdot,\mu;\vx)$ are   $l_\Psi$-Lipschitz continuous in $\vy$ for all $\mu>0$ and $\vx\in\cX$, as established by Lemma~\ref{lem:smoothphi}(b). Denote 
		$
		\Delta_{\varphi} :=  \varphi(\vy^{(0)})-\min_{\vy\in\dom(\varphi)}\varphi(\vy),  $ and $
		\Delta :=  \Delta_\varphi + l_\Psi D,
		$
		where $D$ denotes the diameter of $\dom(\varphi)$. Then, for any smoothing parameter $\mu\in(0,1]$ and any initial point $\vy^{(0)}\in\dom(\varphi)$, the following bound holds:
		\begin{align*}
			\widetilde g(\vy^{(0)},\mu)-\min_{\vy} \widetilde g(\vy,\mu)
			&= \varphi(\vy^{(0)}) -  \varphi(\vy^*) + \mathbb{E}_{\vx\sim\mathbb{P}} [\widetilde\Phi(\vy^{(0)},\mu; \vx) - \widetilde\Phi(\vy^*,\mu; \vx)]
			\\ &\le \varphi(\vy^{(0)}) - \min_{\vy\in\dom(\varphi)}\varphi(\vy) + \mathbb{E}_{\vx\sim\mathbb{P}} [l_\Psi \|\vy^{(0)} - \vy^*\|]\;\le\; \Delta,
		\end{align*}
		where $\vy^*\in\arg\min_{\vy} \widetilde g(\vy,\mu)$. Additionally, we clarify that a lower bound of $\min_{\vy\in\dom(\varphi)}\varphi(\vy)$ is typically straightforward to obtain in many cases; for instance, if $\varphi$ is an indicator function of a closed convex set, we explicitly have $\min_{\vy\in\dom(\varphi)}\varphi(\vy)=0$.
		Consequently, we can directly verify that $\left\lceil 36\,C_2\,\epsilon^{-3}\,\Delta\right\rceil$ is a valid upper bound for the iteration number $K$ specified in Corollary~\ref{cor:main1}(a). Similarily, for Corollary~\ref{cor:main1}(b), a  valid upper bound for the iteration number $K$ can also be obtained.
\end{remark}

\begin{remark}[Asymptotic convergence]
	\label{rem:23}
	By choosing $\widehat\epsilon_k = \mu_k/16$, we iteratively refine our approximations to the stationary point of problem~\eqref{eq:minmax}. 
	For $t=1,2,\ldots$, set $K_1=0$ and
	$
	K_{t+1}=K_t+\left\lceil 36C_2 t^3\Delta\right\rceil .
	$
	For $K_t\le k<K_{t+1}$, choose $\mu_k=1/t$ and $\widehat\epsilon_k=\mu_k/16$. By Corollary~\ref{cor:main1}, each stage produces a point $\vy^{(t)}$ satisfying the $1/t$-scaled smoothed stationarity condition in expectation. Since $\sum_{t=1}^{\infty}t^{-2}<\infty$, Theorem~\ref{lem:scaled} implies that every almost-sure cluster point of $\{\vy^{(t)}\}$ is d-stationary for~\eqref{eq:minmax}. 
\end{remark}

\begin{remark}[Complexity results]\label{rem:sampling}
	By Corollary~\ref{cor:main1},    the iteration complexity for obtaining an $\epsilon$-scaled stationary point is ${O}(\epsilon^{-3})$, for obtaining an $(\epsilon,\epsilon)$-Goldstein stationary point is $\widetilde{O}(\epsilon^{-4})$. Recall from Theorem~\ref{thm:scaled2} that the sampling budget is $M_k= \lceil 4l_{\Psi}^2 \widehat{\epsilon}_k^{-2}\rceil$ per iteration. Thus, the sampling complexity for obtaining an $\epsilon$-scaled stationary point is ${O}(\epsilon^{-5})$, for obtaining an $(\epsilon,\epsilon)$-Goldstein stationary point is $\widetilde{O}(\epsilon^{-6})$.
\end{remark}

		\section{Numerical Experiments}\label{sec:numerical}

We evaluate Algorithm~\ref{alg:spg2} on four test problems related to WDRO and adversarially robustness: a newsvendor problem, a robust regression problem, an $\infty$-Wasserstein robust regression problem, and an adversarially robust image-classification problem. We compare SSPG with GDMax~\citep{jin2020local} and SDRO~\citep{wang2021sinkhorn}. For the $\infty$-Wasserstein experiments, we additionally compare with FGSM~\citep{goodfellow2014explaining} and PGD~\citep{madry2018towards}. All experiments are implemented in Python; hardware information is summarized in Table~\ref{tab:python-libraries-processor-info}.
Our implementation of SDRO follows the publicly available implementation accompanying~\citet{wang2021sinkhorn}.

\begin{table}[t]
	\centering
	\renewcommand{\arraystretch}{1.5}
	\resizebox{\textwidth}{!}{%
		\small
		\begin{tabular}{|>{\centering\arraybackslash}m{0.45\textwidth}|>{\centering\arraybackslash}m{0.45\textwidth}|}
			\hline
			\textbf{Problem Type} & \textbf{Processor Info} \\ \hline
			Newsvendor (Section~\ref{sec:news}),
			Regression Problem (Section~\ref{sec:regre}) and $\infty$-Wasserstein DRO (Section~\ref{sec:infinity_wasserstein_regression})
			& 12th Gen Intel(R) Core(TM) i5-1240P with 8GB RAM \\ \hline
			Adversarially Robust Deep Learning Problem (Section~\ref{sec:adversial})
			& NVIDIA Ampere A100 GPU with 80 GB RAM \\ \hline
		\end{tabular}%
	}
	\caption{Tested applications and processor information}
	\label{tab:python-libraries-processor-info}
\end{table}

\subsection{Newsvendor Problem}\label{sec:news}
The newsvendor problem, which models the expected {cost} of a retailer under uncertain demand, takes the form of problem~\eqref{eq:model}. In this subsection, we consider solving problems~\eqref{eq:model2}, ~\eqref{eq:smoothg} and~\eqref{eq:smoothg2} using
\begin{equation}\label{eq:newsvendor}
	\ell(\theta, x) = v\theta - u\min(\theta,x)\quad\text{and}\quad d(x,z) = \frac{1}{2} (x - z)^{2},
\end{equation}
where  $\theta \in \mathbb{R}_+$ represents the inventory level, $x \in \mathbb{R}$ denotes the demand, $v = 5$ is the underage cost,  and $u = 7$ is the overage cost. To ensure that the inner maximization problem has a finite solution, we impose the constraint $\lambda \geq 7$ on the target problems, as required in \citep{lee2021data}.


\textbf{Problem parameters:} We synthetically generate five different demand datasets, each consisting of \mbox{$n=100$} independent samples drawn from an exponential distribution with a rate parameter of 1.
We set $\delta = 1$ and $p = 2$.
For each dataset $X_{\mathrm{train}}$, the empirical distribution $\widehat{\mathbb{P}}_{n}$ is constructed from these~$n$ samples. The support set is defined as $\mathcal{Z}=
\{x:\min (X_{\mathrm{train}})\le x\le\max(X_{\mathrm{train}})\}$. 

\textbf{Algorithm parameters:}
For all methods, we use the same initialization $\theta^{(0)}\sim \mathcal{U}(0, 1)$ on all five datasets, where~$\cU$ denotes the uniform distribution.
In addition, for GDMax and SSPG, we use the same \mbox{$\lambda^{(0)}  \sim \mathcal{U}(7, 15)$} across all five datasets.
For SSPG, we set $\mu_0 = \lambda^{(0)}  \eta$, where $\eta \sim \mathcal{U}(0.1, 1)$.
Following \citep{wang2021sinkhorn}, we use a grid search for SDRO to fine-tune the hyperparameters $\lambda$ and~$\eta$ from the sets \mbox{\{7, 10, 15\}} and \{0.1, 0.5, 1\}, respectively.
Each method is terminated after 1000 iterations.

\textbf{Implementation of the compared methods at the $k$th iteration:}
For each method,   we solve several inner maximization problems to obtain $\{z_i^{(k+1)}\}_{i=1}^n\subset \RR $.
Specifically, for each $i\in [n]$, we solve problem $\max_{z \in \cZ} \ell(\theta^{(k)}, z) - \lambda^{(k)} d(x_i, z)$ using the projected gradient ascent with  fixed step size of~$10^{-2}$ for 20 iterations, starting from $\mathrm{Proj}_{{\mathcal{Z}}}(x_i + 10^{-3}r)$ where $r$ satisfies the normal distribution. 
After obtaining $\{z_i^{(k+1)}\}_{i=1}^n$, GDMax  performs  projected gradient descent   on  $(\theta,\lb)$ using   $\nabla_{(\theta,\lb)}\frac{1}{n}\sum_{i=1}^n\left[\lb^{(k)}\delta^p+\ell(\theta^{(k)},z_i^{(k+1)})-\lambda^{(k)} d(x_i,z_i^{(k+1)})\right]$ as a gradient and a fixed learning rate~$0.1$.

For SSPG and SDRO, for all $i\in [n]$, we generate sets $\Omega_i^k$ of size $M=32$, containing samples near~$z_i^{(k+1)}$. For any $\widehat{z}\in\Omega_i^k$, we set \( \widehat{z} = \mathrm{Proj}_{{\mathcal{Z}}}(z_i^{(k+1)} + r^{(k+1)}) \) with   $r^{(k+1)}$ satisfying the normal distribution.
SDRO then performs a projected gradient descent step on  $\theta$ with gradient $\nabla_{\theta}{g}_s^k(\theta^{(k)},\lb)$ and a fixed learning rate~$0.1$, where
\begin{align}
	\label{eq:gradientgsk}
	{g}_s^k( \theta,\lb) = \lambda \delta^{p} + \lb\eta \frac{1}{n}\sum_{i=1}^n\left[ \log \left(\frac{1}{M} \sum_{\widehat{z}\in\Omega_i^k}  \left[ e^{\frac{\ell(\theta, \widehat{z}) - \lambda d(x_i, \widehat{z})}{\lb\eta}} \right] \right)\right].
\end{align}
Our SSPG method conducts a projected gradient descent step on the primal variable $(\theta,\lb)$ with $\nabla_{(\theta,\lb)}\widetilde{g}^k( \theta^{(k)},\lb^{(k)} , \mu_k)$ and a fixed learning rate $0.1$, where
\begin{align}
	\label{eq:gradientgtildek}
	\widetilde{g}^k(\theta,\lb,\mu) = \lambda \delta^{p} + \mu \frac{1}{n}\sum_{i=1}^n\left[ \log \left(\frac{1}{M} \sum_{\widehat{z}\in\Omega_i^k}  \left[ e^{\frac{\ell(\theta, \widehat{z}) - \lambda d(x_i, \widehat{z})}{\mu}} \right] \right)\right].
\end{align}
We then update $\mu$ according to
\begin{equation}\label{eq:munews}
	\mu_{k+1} = \left\{
	\begin{aligned}
		& \mu_k, \hspace{2.7cm} \text{ if }
		\widetilde{g}^k( \theta^{(k+1)},\lb^{(k+1)}, \mu_k) - \widetilde{g}^k( \theta^{(k)}, \lb^{(k)},\mu_k)< -\mu_k^{2\sigma_2}, \\
		&\max\{\sigma_1 \mu_k, 10^{-4}\mu_{0}\}, \,\,\, \text{otherwise},
	\end{aligned}
	\right.
\end{equation}
where $\sigma_{1} = 0.99$, $\sigma_{2} = 0.5$.

\begin{figure}
	\centering
	\begin{subfigure}{0.4\textwidth}
		\centering
		\includegraphics[width=\linewidth]{./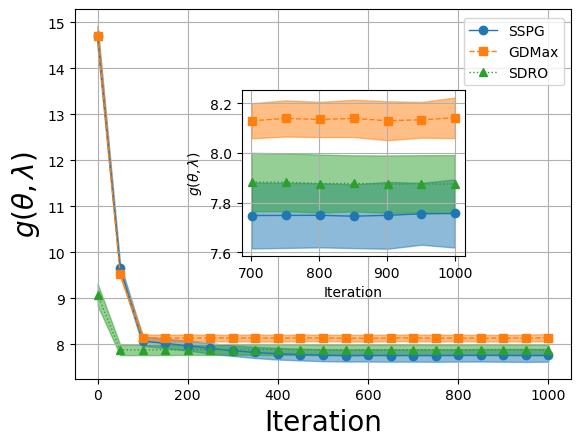}
		\caption{$g(\theta, \lambda)$}
		\label{fig:sub2}
	\end{subfigure}
	~~~~
	\begin{subfigure}{0.4\textwidth}
		\centering
		\includegraphics[width=\linewidth]{./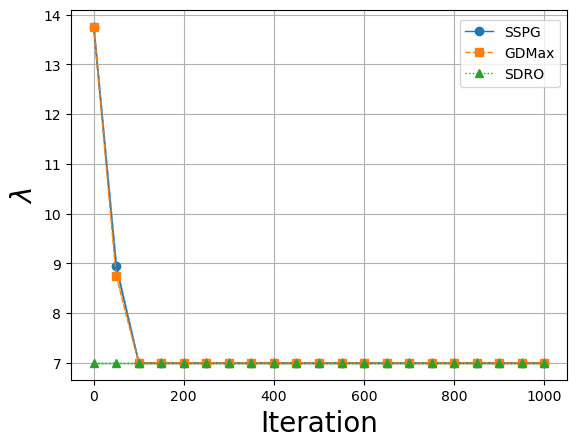}
		\caption{$\lambda$ iterates}
		\label{fig:sub1}
	\end{subfigure}
	\caption{Comparison of $g(\theta,\lambda)$ and $\lb$  among SSPG, GDMax, and SDRO for solving the newsvendor problem.}
	\label{fig:overall}
\end{figure}

\textbf{Performance comparisons:}

Figure~\ref{fig:overall}(a) reports the mean and standard deviation of $g(\theta,\lambda)$ over the five datasets. In this experiment, SSPG attains lower final objective values than GDMax and SDRO. Figure~\ref{fig:overall}(b) reports the corresponding values of $\lambda$. SSPG and GDMax move rapidly to the imposed lower bound $\lambda=7$, while SDRO uses the fixed value $\lambda=7$ selected by grid search.

Figure~\ref{fig:overall}(b) shows the mean and standard deviation of \( \lambda \).
It reveals that SSPG and GDMax quickly converge to values of 7, while SDRO maintains a fixed \(\lambda = 7\).
This choice results from a grid search indicating that the pair \(\lambda = 7, \eta=0.1\)  yields the best performance among the tested configurations for  SDRO.

\subsection{Regression Problem}\label{sec:regre}
The distributionally robust regression problem aims to find a robust solution to the standard regression problem by minimizing the worst-case risk.
In this subsection, we consider problems~\eqref{eq:model2},~\eqref{eq:smoothg}, and~\eqref{eq:smoothg2} with
\begin{equation}\label{eq:ell-d-func}
	\ell(\vtheta,\vx) = (h_{\vtheta}(\va) - b)^{2}, \text{ and }d(\vx,\vz) = d((\va, b), (\overline{\va}, \overline{b})) = \frac{1}{2} \|\va - \overline{\va}\|_{2}^{2} + \infty |b - \overline{b}|,
\end{equation}
where $h_{\vtheta}: \mathbb{R}^{m_2-1} \to \mathbb{R}$ is a {neural network} parameterized by $\vtheta$, $\vx = (\va, b)$, $\vz = (\overline{\va}, \overline{b})$ with $\va, \overline{\va} \in \mathbb{R}^{m_2-1}$ representing a vector of features and  $b, \overline{b} \in \mathbb{R}$ denoting a target.
Specifically,
we employ a neural network with a single hidden layer containing three neurons, using the ReLU activation function~\citep{goodfellow2016deep}. 
The $\infty$ before $|b - \overline{b}|$ means that there is no uncertainty in the target variable.

\textbf{Datasets:}
We consider three real-world datasets, \verb|Space GA|, \verb|BodyFat|, and \verb|MG|, from the \verb|LIBSVM| repository.
 Each dataset, containing
$n$ data points, is randomly partitioned into training (80\%) and testing (20\%) sets using \verb|train_test_split| from \verb|Scikit-learn|, using the same random seeds across all methods.  The resulting training and test sets are
$X_{\text{train}} \in \mathbb{R}^{\lfloor 0.8 \times n \rfloor \times m_2}$ and $ X_{\text{test}} \in \mathbb{R}^{\lceil 0.2 \times n \rceil \times m_2}$, respectively.
These two sets are further normalized using  \verb|StandardScaler|. To ensure fair comparisons, we employ five distinct random seeds for data preparation, including splitting, and initialization, ensuring identical dataset configurations across all three methods.

\textbf{Problem parameters:}
We set $\delta = 10$ and $p = 2$.
For each dataset $X_{\mathrm{train}}$, the empirical distribution~$\widehat{\mathbb{P}}_{n}$ is constructed from the training samples; the support set is given as $\mathcal{Z} = \widetilde{\mathcal{Z}} \times \mathbb{R}$ with~$\widetilde{\mathcal{Z}}~=~\prod_{j=1}^{m_2-1} [ \min([X_{\text{train}}]_{\cdot,j}), \max([X_{\text{train}}]_{\cdot,j}) ]$.  


\textbf{Algorithm parameters:}
For all methods, we initialize $\vtheta^{(0)}$ using PyTorch's default parameter initialization under a fixed random seed.
In addition, for GDMax and SSPG, we let $\lambda^{(0)}  \sim \mathcal{U}(1, 10)$.
For SSPG, we set $\mu_0 = \lambda^{(0)}  \eta$, where~$\eta \sim \mathcal{U}(0.1, 1)$.
Following \citep{wang2021sinkhorn}, we utilize a grid search for SDRO to fine-tune the hyperparameters $\lambda$ and~$\eta$ from the sets \{1, 5,10\} and \{0.1, 0.5, 1\}, respectively.
For each comparison method, the training is terminated after 500 epochs.

\textbf{Implementation of the compared methods at the $k$th iteration:}
For each method,  we perform an inner maximization step to obtain
$\{\vz_i^{(k+1)}= (\overline{\va}_i^{(k+1)}, \overline{b}_i^{(k+1)})\}_{i=1}^{\lfloor 0.8 \times n \rfloor} \subset  \mathbb{R}^{m_2}$.
Specifically, for each $i\in[\lfloor 0.8 \times n \rfloor]$, we solve problem $\max_{\vz \in \cZ} \ell(\vtheta^{(k)}, \vz) - \lambda^{(k)} d(\vx_i, \vz)$ using projected gradient ascent with a fixed step size of~$10^{-2}$ for five iterations, starting from $\vx_i + 10^{-3} \vr$ where $\vr$ satisfies the normal distribution.
Notice that by the definition of $d$ in~\eqref{eq:ell-d-func}, we actually fix the last component of $\vz_i$ as the label corresponding to $\vx_i$.

After obtaining $\{\vz_i^{(k+1)}\}_{i=1}^{\lfloor 0.8 \times n \rfloor}$, GDMax updates   $(\vtheta,\lb)$ via  a projected gradient descent step using the gradient $\nabla_{(\vtheta,\lb)}\mathbb{E}_{\vx \sim \widehat{\mathbb{P}}_n}\left[\lb^{(k)}\delta^p+\ell(\vtheta^{(k)},\vz^{(k+1)})-\lambda^{(k)} d(\vx,\vz^{(k+1)})\right]$ and a fixed learning rate $\alpha>0$.
For SSPG and SDRO, we generate, for each $i\in[\lfloor 0.8 \times n \rfloor]$, a set $\Omega_i^k$ of size $M=32$, containing samples near~$\vz_i^{(k+1)}$. Specifically, for any $\widehat{\vz}\in\Omega_i^k$, we let \( \widehat{\vz} = \mathrm{Proj}_{{\mathcal{Z}}}(\vz_i^{(k+1)} + \vr^{(k+1)}),\) where $\vr^{(k+1)})$ satisfies the normal distribution.
SDRO then performs a projected gradient descent step on  $\vtheta$ with gradient $\nabla_{\vtheta}{g}_s^k(\vtheta^{(k)},\lb)$ and a fixed learning rate $\alpha$, where ${g}_s^k$ is given in~\eqref{eq:gradientgsk} with $\widehat{z}$ replaced by $\widehat{\vz}$.
Our SSPG method similarly applies a projected gradient descent step on $(\vtheta,\lb)$ with gradient $\nabla_{(\vtheta,\lb)}\widetilde{g}^k( \vtheta^{(k)},\lb^{(k)} , \mu_k)$ and a fixed learning rate $\alpha$, where  $\widetilde{g}^k$ is given in~\eqref{eq:gradientgtildek} with $\widehat{z}$ replaced by~$\widehat{\vz}$. We then update~$\mu$ by 
\begin{equation}\label{eq:mu2}
	\begin{aligned}
		&\mu_{k+1} := \max\left\{10^{-4}\mu_{0},{\left(k+2\right)}^{-\frac{1}{3}}\mu_{0}\right\}.
	\end{aligned}
\end{equation}
For all compared methods, we { perform a grid search to} select the learning rate $\alpha$ from the set
$\{10^{-1}, 5 \times 10^{-1}, 10^{-2}, 5 \times 10^{-2}, 10^{-3}\}$.

\textbf{Performance comparisons:}
We measure model performance using the root mean square error (RMSE). 
Each model is evaluated on a modified version of the test set. Specifically, for each data point $\vx=(\va, b)$ in the test set,
the feature vector
$\va$ is perturbed according to $\va+\upsilon \boldsymbol{\omega}\|\va\|_2$,
where $\upsilon = 2$ and $\boldsymbol{\omega} \sim [\text{Laplace}(0,1)]^{q}$~\citep{goodfellow2016deep}, where Laplace represents the Laplace distribution.
{For each candidate learning rate, we run 5 random seeds and determine the best learning rate $\alpha$ for SSPG and GDMax based on the average RMSE. Additionally, for SDRO, we select the best-performing $\alpha$ using the same criterion, while the best $\lambda$ and $\eta$ are determined per seed.}
Finally, we report the mean value and standard deviation of RMSE and training time across
compared methods, as shown in Table~\ref{tab:algorithms2}.

\begin{table}[htbp]
	\centering
	\small
	\begin{tabular}{l|cc|cc|cc}
		\toprule
		\multirow{2}{*}{\textbf{Dataset}} & \multicolumn{2}{c|}{\textbf{SSPG}} & \multicolumn{2}{c|}{\textbf{GDMax}} & \multicolumn{2}{c}{\textbf{SDRO}} \\ \cmidrule(lr){2-3} \cmidrule(lr){4-5} \cmidrule(lr){6-7}
		& \textbf{RMSE} & \textbf{Time (s)} & \textbf{RMSE} & \textbf{Time (s)} & \textbf{RMSE} & \textbf{Time (s)} \\
		\midrule
		{\verb|Space GA|} & \textbf{0.41 $\pm$ 0.20} & 79.45  & 0.87 $\pm$ 0.84 & {51.31} & 0.50 $\pm$ 0.26 & 655.66 \\
		{\verb|MG|} & 
		0.38 $\pm$ 0.28 & 44.56 & 0.50 $\pm$ 0.53 & {21.14} & \textbf{0.24 $\pm$ 0.03} & 370.69  \\
		{\verb|BodyFat|} & \textbf{0.05 $\pm$ 0.03} & 20.02 & 0.10 $\pm$ 0.10 & {6.93} & 0.08 $\pm$ 0.06 & 161.25  \\
		\bottomrule
	\end{tabular}
	\caption{{RMSE (mean $\pm$ standard deviation) and time (mean) for the distributionally robust regression problem on perturbed test sets. 
	}}
	\label{tab:algorithms2}
\end{table}
From this table, we observe that SSPG achieves the lowest RMSE on two of the three datasets while also exhibiting smaller standard deviations, indicating superior error minimization and robust performance. SDRO, although it achieves the best performance on the \verb|MG| dataset, requires significantly longer runtime, limiting its practical advantage.
Overall, SSPG is an effective method for minimizing prediction errors and ensuring consistent performance, rendering it a good choice for robust regression tasks across diverse data environments.

\subsection{Adversarially Robust Learning}\label{sec:infinity_wasserstein_regression}
We next consider the $\infty$-Wasserstein robust regression formulation discussed in Section~\ref{sec:appli}. The problem  can be expressed as
\begin{align}
	\min_{\vtheta} \frac{1}{n} \sum_{i=1}^{n} \max_{\vz_i \in \mathcal{B}^{\infty}_{\hat{\epsilon}}(\va_i)} \ell(\vtheta, (\vz_i, b_{i}))~\text{ with }~ \ell(\vtheta, (\vz_i, b_{i})):= \left(h_{\vtheta}(\vz_i) - b_{i}\right)^{2}
	\label{eq:infinity_dro_objective}
\end{align}
where $h_{\vtheta}: \mathbb{R}^{m_2-1} \to \mathbb{R}$ is a neural network function parameterized by $\vtheta$, $ \va_{i} \in \mathbb{R}^{m_2-1}$ denotes a vector of features, $b_{i} \in \mathbb{R}$ denotes the corresponding target,  $ \mathcal{B}^{\infty}_{\hat{\epsilon}}(\va_i) $ denotes the \( \infty \)-ball centered at \( \va_i \) with radius~\(\hat{\epsilon}\). The neural network architecture that we employ is a single hidden layer containing five neurons, using the ReLU activation function~\citep{goodfellow2016deep}.

\textbf{Datasets:}
The datasets  are the same as those utilized in Section~\ref{sec:regre}.


\textbf{Algorithm parameters:}
For all methods, 
we initialize $\vtheta^{(0)}$ using PyTorch's default parameter initialization under a fixed random seed, and set \( \hat{\epsilon} = 0.25 \). For SSPG, we set \( \mu_0 = \lambda^{(0)} \eta \), with \( \eta \sim \mathcal{U}(0.1, 1) \). The step size for the inner problem is \( \hat{\epsilon} \) for FGSM, 
while for GDMax and PGD, we use \( \frac{\hat{\epsilon}}{2} \). Following \citep{wang2021sinkhorn}, we let $\mu = \lambda \eta$ and perform a grid search for SDRO to fine-tune the hyperparameters \( \lambda \) and \( \eta \) from the sets \{1, 5, 10\} and \mbox{\{0.1, 0.5, 1\}} respectively. Additionally, for each method, we perform a grid search to select the optimal learning rate $\alpha$, for the outer minimization problem, from the set $\{10^{-1}, 5 \times 10^{-2}, 10^{-2}\},$ and
the training is terminated after 250 epochs.

\textbf{Implementation of the Compared Methods at the \(k\)th Iteration:}
For the inner maximization problem in \eqref{eq:infinity_dro_objective}, each method first computes  
$\{\vz_i^{(k+1)}\}_{i=1}^{n} \subset \mathbb{R}^{m_2-1}$.
Specifically, for each method, we initialize from points obtained by adding random noise uniformly sampled from the interval $[-\hat{\epsilon}, \hat{\epsilon}]$ to $\va_i$.
For FGSM, the inner point is generated by one projected gradient step with stepsize $\hat\epsilon$. For PGD, GDMax, SSPG, and SDRO, the inner maximization is approximated by 20 projected gradient-ascent steps with stepsize $\hat\epsilon/2$, initialized from a uniformly perturbed point in $\mathcal B^\infty_{\hat\epsilon}(\va_i)$.


After obtaining $\{\vz_i^{(k+1)}\}_{i=1}^{n}$, GDMax updates  $\vtheta$ via  a projected gradient descent step using the gradient $\nabla_{\vtheta}\mathbb{E}_{\vx \sim \widehat{\mathbb{P}}_n}\left[\ell(\vtheta, (\vz^{(k+1)}_i, b_{i}))\right]$ and a fixed learning rate $\alpha>0$.
For SSPG and SDRO, we generate, for each $i\in[n]$, a set $\Omega_i^k$ of size $M=8$, containing samples near~$\vz_i^{(k+1)}$. Specifically, for any $\widehat{\vz}\in\Omega_i^k$, we let \( \widehat{\vz} = \mathrm{Proj}_{\mathcal{B}^{\infty}_{\hat{\epsilon}}(\va_i)}(\vz_i^{(k+1)} + \vr^{(k+1)}),\) where $\vr^{(k+1)})$ satisfies the normal distribution.
SDRO then performs a projected gradient descent step on  $\vtheta$ with gradient $\nabla_{\vtheta}{g}_s^k(\vtheta^{(k)},\lb)$ and a fixed learning rate~$\alpha$, where ${g}_s^k$ is given in~\eqref{eq:gradientgsk} with   $\delta=0$, $d\equiv 0$, and $\ell(\theta,\hat z)$ replaced by $\ell(\vtheta, (\widehat{\vz}, b_{i}))$.
Our SSPG method similarly applies a projected gradient descent step on $\vtheta$ with gradient $\nabla_{\vtheta}\widetilde{g}^k( \vtheta^{(k)},0, \mu_k)$ and a fixed learning rate~$\alpha$, where  $\widetilde{g}^k$ is given in~\eqref{eq:gradientgtildek} with $\ell(\theta,\hat z)$ replaced by $\ell(\vtheta, (\widehat{\vz}, b_{i}))$. We then update~$\mu$ by \eqref{eq:mu2}.

\textbf{Performance comparisons:}
We conducted experiments on the $\infty$-Wasserstein DRO problem following the same experimental settings as those outlined for the numerical tests in Table~\ref{tab:algorithms2}. Then, we report the mean values and standard deviations of the MSE and training time for all compared methods in Table~\ref{tab:algorithms_infinity_dro}.
\begin{table}[htbp]
	\centering
	\resizebox{\textwidth}{!}{\begin{tabular}{l|cc|cc|cc|cc|cc}
			\toprule
			\multirow{2}{*}{\textbf{Dataset}} & \multicolumn{2}{c|}{\textbf{SSPG}} & \multicolumn{2}{c|}{\textbf{GDMax}} &
			\multicolumn{2}{c}{\textbf{SDRO}} &
			\multicolumn{2}{c}{\textbf{FGSM}} &
			\multicolumn{2}{c}{\textbf{PGD}} \\ \cmidrule(lr){2-3} \cmidrule(lr){4-5} \cmidrule(lr){6-7} \cmidrule(lr){8-9}
			\cmidrule(lr){10-11}
			& \textbf{MSE} & \textbf{Time (s)} & \textbf{MSE} & \textbf{Time (s)} &
			\textbf{MSE} & \textbf{Time (s)} &
			\textbf{MSE} & \textbf{Time (s)} &
			\textbf{MSE} & \textbf{Time (s)} \\
			\midrule
			{\texttt{Space GA}} & \textit{0.214 $\pm$ 0.103} & 37.089 & 1.173 $\pm$ 0.574 &24.207 & 0.214 $\pm$ 0.103 & 288.771 & 0.231 $\pm$ 0.122 & 10.784 & \textbf{0.212 $\pm$ 0.103} & 14.492 \\
			{\texttt{MG}} & \textit{0.351 $\pm$ 0.094} & 19.829 & 1.036 $\pm$ 0.293 &12.555 & 0.351 $\pm$ 0.095 & 162.904 & 0.362 $\pm$ 0.104 & 6.267 & \textbf{0.344 $\pm$ 0.095} & 19.893 \\
			{\texttt{BodyFat}} & \textit{0.043 $\pm$ 0.067} & 10.551 & 0.785 $\pm$ 0.879 & 6.505 & 0.043 $\pm$ 0.066 & 89.754 & 0.063 $\pm$ 0.091 & 2.004 & \textbf{0.041 $\pm$ 0.064} & 11.624 \\
			\bottomrule
	\end{tabular}}
	\caption{{MSE (mean $\pm$ standard deviation) and time (mean) for the $\infty$-Wasserstein DRO   problem on perturbed test sets.
	}}
	\label{tab:algorithms_infinity_dro}
\end{table}
Table~\ref{tab:algorithms_infinity_dro} shows that PGD attains the lowest mean MSE on all three datasets, while SSPG is close to PGD and substantially faster than SDRO. This is expected because PGD is tailored to the $\ell_\infty$ perturbation model in~\eqref{eq:infinity_dro_objective}, whereas SSPG is designed for the broader expectation-over-maximization formulation.

\subsection{Adversarially Robust Deep Learning Problem}\label{sec:adversial}
We finally consider adversarially robust image classification, where the goal is to train a classifier that is robust to perturbations of the input images \citep{madry2018towards}. Given an image,  we use a neural network prediction function $h_{\vtheta}: \mathbb{R}^{l \times w \times h} \to \mathbb{R}^{m_3}$, parameterized by $\vtheta$, to predict the target class. 
The corresponding loss function and distance function in~\eqref{eq:model2},~\eqref{eq:smoothg}, and~\eqref{eq:smoothg2} are given by
\begin{align}
	\ell(\vtheta,\vx) = -\sum_{i=1}^{m_3} b_i \log \left( \frac{e^{\left[h_{\vtheta}\left(\textbf{a}\right)\right]_i}}{\sum_{j=1}^{m_3}e^{[h_{\vtheta}(\textbf{a})]_j}}\right), \text{ and } d(\mathbf{x}, \vz)  = \frac{1}{2} \|\mathbf{a} - \overline{\va}\|_{F}^{2} + \infty \|\vb - \overline{\vb}\|_1,
\end{align}
where $\vx = (\va, \vb)$, $\vz = (\overline{\va}, \overline{\vb})$ with $\va, \overline{\va} \in {\mathbb{R}}^{l \times w \times h}$ representing an image and  $\vb, \overline{\vb} \in \{0,1\}^{m_3}$ denoting their corresponding one-hot encoded label vectors. We set \(\delta = 10\), \(p = 2\), and \(\mathcal{Z} =[\min X_{\mathrm{train}},\max X_{\mathrm{train}}]^{l \times w \times h} \times \{0,1\}^{m_3}\).

\textbf{Datasets and neural network architectures:}
To evaluate the efficacy of the compared methods, we use two benchmark datasets: \verb|Fashion-MNIST| and \verb|CIFAR-10|. These datasets are loaded using \verb|torchvision| with the standard training/test split applied and the images are normalized using the usual mean-subtraction and standard-deviation scaling.

For \verb|Fashion-MNIST|, we utilize a convolutional neural network (CNN) architecture 
consisting of three convolutional layers with 32, 64, and 128 filters, each using a $3\times 3$ kernel, followed by ReLU activation and max pooling. The middle convolutional layer also employs dropout \citep{goodfellow2016deep} with a probability of 0.3 and batch normalization \citep{ioffe2015batch}. The output of the convolutional layers is then passed through a fully connected layer with 512 hidden units, followed by ReLU activation, dropout (probability 0.25), and batch normalization.

For \verb|CIFAR-10|, we adopt the All-CNN architecture \citep{springenberg2014striving}, incorporating batch normalization after each ReLU activation in every convolutional layer.

\textbf{Algorithm parameters:}
For all   methods, we initialize $\vtheta^{(0)}$ using PyTorch's default parameter initialization under a fixed random seed. 
In addition, for GDMax and SSPG, we let $\lambda^{(0)}  \sim \mathcal{U}(1, 10)$.
For SSPG, we set $\mu_0 = \lambda^{(0)}  \eta$, where $\eta \sim \mathcal{U}(0.1, 1)$.
We utilize a grid search for SDRO to fine-tune the hyperparameters $\lambda$ and $\eta$ from the sets \{1, 10\} and \{0.1, 1\}, respectively.
For each comparison method, the training is terminated after 100 epochs.

\textbf{Implementation of the compared methods at the $k$th iteration:}
For all methods, we first sample a mini-batch of size
$B=100$ from the training set, denoted as $\{\vx_{j_1^k},\vx_{j_2^k},\ldots,\vx_{j_B^k}\}$.
Next, for each~$i~\in~[B]$, we
perform an inner maximization step to obtain
$\{\vz_{j_i}^{(k+1)}\}_{i=1}^B$.
Specifically, we solve $B$ problems of the form $\max_{\vz \in \cZ} \ell(\vtheta^{(k)}, \vz) - \lambda^{(k)} d(\vx_{j_i^k}, \vz)$ using   projected gradient ascent with a fixed step size of~$10^{-2}$ for 15 iterations, starting from $\vx_{j_i^k}+10^{-3}\vr$ where $\vr$ follows the standard normal distribution.

After obtaining $\{\vz_{j_i}^{(k+1)}\}_{i=1}^B$, GDMax updates the   primal variable $(\vtheta,\lb)$ via  a projected gradient descent step with     $\nabla_{(\vtheta,\lb)} \frac{1}{B}\sum_{i=1}^B\left[\lb^{(k)}\delta^p+\ell(\vtheta^{(k)},\vz_{j_i^k}^{(k+1)})-\lambda^{(k)} d(\vx_{j_i^k},\vz_{j_i^k}^{(k+1)})\right]$ and a learning rate $\alpha_k>0$.

For SSPG and SDRO, we generate, for each $i\in\{1,2,\ldots,B\}$, a set~$\Omega_{j_i}^k$ of size $M=8$, containing samples near~$\vz_{j_i}^{(k+1)}$. Specifically, for any $\widehat{\vz}\in\Omega_{j_i}^k$, we form the immediate point \( \widehat{\vz} = \mathrm{Proj}_{{\mathcal{Z}}}(\vz_{j_i}^{(k+1)} + \vr^{(k+1)}),\) where $\vr^{(k+1)})$ satisfies the normal distribution.
For each $j_i$, we retain only the samples that improve upon $\vz_{j_i}^{(k+1)}$, defining the refined set as
\begin{align*}
	\overline{\Omega}_{j_i}^k:= \{\vz_{j_i}^{(k+1)}\} \cup \left\{\widehat{\vz}\in {\Omega}_{j_i}^k: \ell(\vtheta^{(k)}, \widehat{\vz}) - \lambda^{(k)} d(\vx_{j_i^k}, \widehat{\vz}) > \ell(\vtheta^{(k)}, \vz_{j_i}^{(k+1)}) - \lambda^{(k)} d(\vx_{j_i^k}, \vz_{j_i}^{(k+1)})\right\}.
\end{align*}
SDRO then updates  $\vtheta$ via a projected gradient descent step with the gradient $\nabla_{\vtheta}{g}_s^k(\vtheta^{(k)},\lb)$ and the learning rate $\alpha_k$,
where
\begin{align}
	\label{eq:gradientgsk2}
	{g}_s^k( \vtheta,\lb) = \lambda \delta^{p} + \lb\eta \frac{1}{B}\sum_{i=1}^B\left[ \log \left(\frac{1}{|\overline{\Omega}_{j_i}^k|} \sum_{\widehat{\vz}\in\overline{\Omega}_{j_i}^k}  \left[ e^{\frac{\ell(\vtheta, \widehat{\vz}) - \lambda d(\vx_{j_i^k}, \widehat{\vz})}{\lb\eta}} \right] \right)\right].
\end{align}
Our SSPG method conducts a projected gradient descent step on the primal variable $(\vtheta,\lb)$ using  $\nabla_{(\vtheta,\lb)}\widetilde{g}^k( \vtheta^{(k)},\lb^{(k)} , \mu_k)$ and the learning rate $\alpha_k$, where
\begin{align}
	\label{eq:gradientgtildek2}
	\widetilde{g}^k(\vtheta,\lb,\mu) = \lambda \delta^{p} + \mu \frac{1}{B}\sum_{i=1}^B\left[ \log \left(\frac{1}{|\overline{\Omega}_{j_i}^k|} \sum_{\widehat{\vz}\in\overline{\Omega}_{j_i}^k}  \left[ e^{\frac{\ell(\vtheta, \widehat{\vz}) - \lambda d(\vx_{j_i}^k, \widehat{\vz})}{\mu}} \right] \right)\right].
\end{align}
We then update~$\mu$ by~\eqref{eq:mu2}.
For all compared methods, we set $\alpha_k= \alpha \gamma^{\lfloor k/20 \rfloor}$, where we perform a grid search to choose $\alpha$ from the set
$\{1 \times 10^{-1}, 5 \times 10^{-1}, 5 \times 10^{-2}\}$, and $\gamma$ is  selected from
$\{0.5,0.9\}$.

\textbf{Performance comparisons:}
We evaluate model performance using accuracy and overall training time. Each model is tested on a modified version of the test set, where each feature vector is perturbed in a manner similar to the distributionally robust regression setting. Specifically, for each data point $\vx=(\va, \vb)$ in the test set,
we apply the perturbation $\va+\upsilon \boldsymbol{\omega}\|\va\|_2$,
where $\boldsymbol{\omega} \sim [\text{Laplace}(0,1)]^{q}$~\citep{goodfellow2016deep}, $\upsilon = 2 \times 10^{-3}$ for  \verb|Fashion-MNIST|  and $\upsilon= 2 \times 10^{-4} $ for \verb|CIFAR-10|.

To ensure a fair comparison,
we use five distinct random seeds for initialization across all three methods.
Given the large dataset sizes and high computational cost,
we first sample 20\% of the training set, ensuring that every class has the same number of samples, for hyperparameter tuning, optimizing $\alpha$, $\gamma$, $\lambda$, and $\eta$ from the specified choices in a manner similar to the one used for the regression problems. The best learning rate for SSPG and GDMax, as well as the hyperparameters for SDRO, are selected based on the highest accuracy.
After selecting the parameters, we apply each method to the full training set. Finally, we aggregate the results across the five seeds and report in Table~\ref{tab:algorithm_comparison_deep_learning} the mean accuracy and training time, along with their standard deviations, for all methods.

Table~\ref{tab:algorithm_comparison_deep_learning} shows that SSPG attains the highest mean accuracy on Fashion-MNIST among the tested methods, while SDRO attains a slightly higher mean accuracy on CIFAR-10. SSPG is substantially faster than SDRO and has accuracy comparable to the best-performing method in both datasets. Overall, SSPG achieves an effective balance between accuracy and computational efficiency.


\begin{table}[htbp]
	\centering
	\scriptsize
	\begin{tabular}{l|cc|cc|cc}
		\toprule
		\multirow{2}{*}{\textbf{Dataset}} & \multicolumn{2}{c|}{\textbf{SSPG}} & \multicolumn{2}{c|}{\textbf{GDMax}} & \multicolumn{2}{c}{\textbf{SDRO}} \\ \cmidrule(lr){2-3} \cmidrule(lr){4-5} \cmidrule(lr){6-7}
		& \textbf{Accuracy (\%)} & \textbf{Time (hrs)} & \textbf{Accuracy} & \textbf{Time (hrs)} & \textbf{Accuracy} & \textbf{Time (hrs)} \\
		\midrule
		{\verb|Fashion-MNIST|} & \textbf{93.09 $\pm$ 0.13} & 2.52 & 92.64 $\pm$ 0.07 & \textbf{2.04} & 92.85 $\pm$ 0.04 & 10.04  \\
		{\verb|CIFAR-10|} & 86.64 $\pm$ 0.11 & 4.20 & 86.41 $\pm$ 0.36 & \textbf{2.93} & \textbf{86.68 $\pm$ 0.08} & 13.41  \\
		\bottomrule
	\end{tabular}
	\caption{Comparison of accuracy (mean $\pm$ standard deviation) and time (mean) for the adversarially deep learning problem on the perturbed test set.}\label{tab:algorithm_comparison_deep_learning}
\end{table} 

\section{Conclusion}\label{sec:conclu}
We study nonconvex minEmax problems in which the objective is an expectation of pointwise maxima. We introduce an LME smoothing of the random value function and developed SSPG, a stochastic smoothing proximal-gradient method for the resulting sequence of smoothed problems.

Our analysis provides two main guarantees. First, stationarity of the smoothed problems can be converted into Goldstein stationarity of the original nonsmooth problem through explicit smoothing-gap bounds. Second, SSPG attains an $\epsilon$-scaled stationary point in expectation with iteration complexity $O(\epsilon^{-3})$ and sample complexity $O(\epsilon^{-5})$, and it attains an $(O(\epsilon),O(\epsilon))$-Goldstein stationary point with complexities $\widetilde O(\epsilon^{-4})$ and $\widetilde O(\epsilon^{-6})$, respectively. Under a refinement schedule with summable stationarity tolerances, any almost-sure cluster point of the generated approximate-stationary sequence is Clarke stationary, and hence directional stationary, for the original problem.

For WDRO, the derived dual-multiplier bound permits the dualized worst-case-risk problem to be treated on a compact feasible set. The numerical results on newsvendor, robust regression, $\infty$-Wasserstein/adversarially learning, and adversarially robust image classification demonstrate that SSPG is a practical first-order approach and is competitive with the tested baselines. 

	\vskip 0.2in
	\bibliography{dro}

\newpage

\appendix
\section{An Example Illustrating the Limitation of Value-Based Inner Accuracy}\label{sec:appenE}
 
 
 When solving $\min_{\vy}g(\vy)$, subgradient methods may exhibit zigzagging behavior and may fail to approach stationary points, especially when the selected subgradients are unstable,~\citet{li2025subgradient}. In the expectation-over-maximization setting, a natural heuristic is to approximate a subgradient of $g$ by $\nabla_{\vy}\Psi(\vy,\vz^\epsilon)$, where $\vz^\epsilon$ is a near-optimal solution of the inner maximization problem. The following example shows that inner value accuracy alone does not control the distance from this vector to the Clarke subdifferential of the outer objective.

\begin{example}[Inner value-accuracy does not imply outer subgradient-accuracy]\label{exam:2}
	
	Fix compact sets \(\cY=[-1,1]^2\) and \(\cZ=[-1,1]\). Let \(\eta\in(0,\epsilon]\).
	Choose a \(C^\infty\) bump function
	\[
	\psi(z)=
	\begin{cases}
		\exp\bigl(-\tfrac{1}{1-(2z)^2}\bigr)\big/\exp(-1), & |z|<\tfrac12,\\[2mm]
		0, & |z|\ge \tfrac12,
	\end{cases}
	\]
	so that \(0\le \psi\le 1\), \(\psi(0)=1\), and \(\psi(z)=0\) for \(|z|\ge\tfrac12\).
	Define \(u(z):=1-\eta\,\psi(z)\).
	
	Next define \(\vv:\cZ\to\mathbb{R}^2\) as a continuous map by
	\[
	\vv(z):=
	\begin{cases}
		(0,1)\zz, & |z|\le \tfrac14,\\
		(2-4|z|)(0,1)\zz+(4|z|-1)\,(\operatorname{sign}z,0)\zz, & \tfrac14<|z|<\tfrac12,\\
		(\operatorname{sign}z,0)\zz, & |z|\ge \tfrac12.
	\end{cases} 
	\]
	Set
	\[
	f(\vy,z):=u(z)+\langle \vy, \vv(z)\rangle,\qquad g(\vy):=\max_{z\in \cZ} f(\vy,z).
	\]
	At \(\vy=\mathbf{0}\), we claim that though the inner maximization can be solved to arbitrarily high accuracy in terms of function value, the corresponding gradient  \(\nabla_\vy f(\mathbf{0},z_\epsilon)\)—evaluated at such a high-accuracy solution~$z_\epsilon$—may still remain at unit distance from the subdifferential set \(\partial g(\mathbf{0})\). 
	Since \(u(z)\le 1\) with equality for all \(|z|\ge\tfrac12\), we have
	$
	g(\mathbf{0})=\max_{z\in \cZ} u(z)=1,
	$
	and the set of maximizers at \(\vy=\mathbf{0}\) includes all \(z\) with \(|z|\ge\tfrac12\).
	Because \mbox{\(\nabla_\vy f(\mathbf{0},z)=\vv(z)\)}, Danskin’s theorem gives
	\[
	\partial g(\mathbf{0})=\operatorname{conv}\{\vv(z):|z|\ge\tfrac12\}
	=\operatorname{conv}\{(1,0)\zz,(-1,0)\zz\}=\{(t,0)\zz:t\in[-1,1]\}.
	\]
	For any \(\eta\le \epsilon\), the point \(z_\epsilon:=0\) is \(\epsilon\)-\textbf{optimal} for the inner problem at \(\vy=\mathbf{0}\) because
	$
	g(\mathbf{0})-f(\mathbf{0},0)=\eta\le\epsilon.
	$
	However, it holds that 
	$
	\nabla_\vy f(\mathbf{0},0)=\vv(0)=(0,1)\zz,$ and $
	\operatorname{dist}\bigl((0,1)\zz,\partial g(\mathbf{0})\bigr)=1.
	$
	Thus, an arbitrarily small inner value gap \(g(\vy)-f(\vy,z_\epsilon)\) does {not} control the distance from \(\nabla_\vy f(\vy,z_\epsilon)\) to \(\partial g(\vy)\); there is no modulus \(\phi(\epsilon)\to0\) ensuring
	\(\operatorname{dist}\bigl(\nabla_\vy f(\vy,z_\epsilon),\partial g(\vy)\bigr)\le \phi(\epsilon)\)
	from value accuracy alone.
	
	We now illustrate the smoothing-gradient approach described in Algorithm~\ref{alg:spg2} by considering a numerical experiment starting at the point $\vy=\mathbf{0}$. Specifically, at each iteration, 
	we draw $M$ independent samples $\{z_i\}_{i=1}^M$ from the uniform reference measure $\zeta$ on $\mathcal Z=[-1,1]$, and then compute the corresponding smoothing gradient defined as
	\[
	\cG(\vy,\mu)=\nabla_\vy \tilde g(\vy,\mu)=
	\frac{\mathbb{E}_{z\sim\mathbb{P}}[\,e^{f(\vy,z)/\mu}\,\nabla_\vy f(\vy,z)\,]}{\mathbb{E}_{z\sim\mathbb{P}}[\,e^{f(\vy,z)/\mu}\,]}
	\,\approx\,
	\frac{\sum_{i=1}^M e^{f(\vy,z_i)/\mu}\,\nabla_\vy f(\vy,z_i)}{\sum_{i=1}^M e^{f(\vy,z_i)/\mu}}.
	\]
	To examine its behavior as the smoothing parameter $\mu$ decreases and $M$ increases, we set $\eta=\varepsilon=0.01$ and compute numerical approximations of $\cG(\vy,\mu)$ for various pairs $(\mu,M)$. The results obtained are:
	\[
	\begin{aligned}
		(\mu,M)&=(0.001,1000), && \cG(\vy,\mu)\approx(-0.0605,\,0.0057);\\
		(\mu,M)&=(0.002,800),  && \cG(\vy,\mu)\approx(-0.0141,\,0.0214);\\
		(\mu,M)&=(0.005,500),  && \cG(\vy,\mu)\approx(0.0036,\,0.1208);\\
		(\mu,M)&=(0.010,100),  && \cG(\vy,\mu)\approx(-0.0065,\,0.3291).
	\end{aligned}
	\]
	These numerical values are consistent with the smoothing-gradient behavior predicted by the construction: as $\mu$ decreases and $M$ increases, the computed gradient becomes small near the origin. This contrasts with the value-based inner maximizer $z_\epsilon=0$, whose associated outer gradient remains away from $\partial g(\mathbf 0)$. 
\end{example}

		\section{Boundedness of the Dual Multiplier of the Worst-case Risk Problem in WDRO}\label{appen:A}
		{
			In this section, we present a lemma establishing an explicit bound $B_{\lb}$, ensuring that every optimal solution~\(\lb^*\)	of problem \eqref{eq:worstriskdual} satisfies \(\lb^*\in[0,B_{\lb}]\).
			
			\begin{lemma}\label{lem:lambda-bound}
				Consider problem \eqref{eq:worstriskdual} at a given parameter $\vtheta$, with $d(\vz_1,\vz_2)=\|\vz_1-\vz_2\|_p^p$, $p\ge 1$ and $\delta>0$. Assume the loss function $\ell(\vtheta,\cdot)$ is $L$-Lipschitz continuous for all $\vtheta\in\Theta$. Then, any optimal solution $\lambda^\star$ of problem \eqref{eq:worstriskdual} has an upper bound 
				\[
				\lambda^\star \le L\,C_{p,m_2}\,\delta^{-(p-1)},\quad \text{where}\quad
				C_{p,m_2}=\sup_{\vv \in \RR^{m_2}, \vv\neq \vzero}\frac{\|\vv\|_2}{\|\vv\|_p}=\begin{cases}
					1, & 1\le p\le 2,\\[0.6ex]
					m_2^{\frac12-\frac1p}, & p>2.
				\end{cases}
				\]
			\end{lemma}
			\proof
			Define the objective
			\[
			J(\lambda):=\lambda\delta^p+\mathbb{E}_{\vx\sim \mathbb{P}} [\phi(\lambda;\vx)],\quad\text{with}\quad \phi(\lambda;\vx):=\max_{\vz\in\mathcal{Z}}\{\ell(\vtheta,\vz)-\lambda\|\vz-\vx\|_p^p\}.
			\] 
			Since each $\phi(\lambda;\vx)$ is a pointwise supremum of affine functions in $\lambda$, it is convex and non-increasing, hence $J(\lambda)$ is convex.  
			
			\paragraph{Case of $p>1$.} By Danskin's theorem~\citep{bertsekas1997nonlinear},
			\begin{align}
				\partial\phi(\lambda;\vx)
				&= -\,\operatorname{co}\Big\{\,\|\vz-\vx\|_p^p \ :\ \vz\in\argmax_{\vu\in\cZ}\big(\ell(\vtheta,\vu)-\lambda\|\vu-\vx\|_p^p\big)\Big\}.
				\label{eq:partiallb}
			\end{align}
			Let $\vz(\lambda;\vx)\in\argmax_{\vz\in\cZ}\{\ell(\vtheta,\vz)-\lambda\|\vz-\vx\|_p^p\}$, and define $t(\lambda;\vx)=\|\vz(\lambda;\vx)-\vx\|_p$. 
			
			Fix $\vx\in\cX$. We now prove 
			\begin{align}
				\label{eq:partialphi}
				\partial\phi(\lambda;\vx)\ \subseteq\ \Big[-\Big(\tfrac{L\,C_{p,m_2}}{\lambda}\Big)^{\frac{p}{p-1}},\ 0\Big], \text{ for all }\lb>0.
			\end{align} 
			If every active maximizer satisfies $\vz=\vx$, then all active slopes in~\eqref{eq:partiallb} are zero, and hence $\partial q(\lambda;\vx)=\{0\}$. Thus, we conclude that $\partial\phi(\lambda;\vx) = \{0\}$, 
			which  then implies~\eqref{eq:partialphi}.
			Otherwise, there exists $z(\lambda;\vx)\in\cZ$ such that $\vz(\lambda;\vx)\neq\vx$. We then consider the case when
			$t(\lambda;\vx)=\|\vz(\lambda;\vx)-\vx\|_p>0$. Due to the fact that $\vx\in\cZ$, 
			it follows directly from optimality of  $\vz(\lambda;\vx)$ that
			$
			\ell(\vtheta,\vz(\lambda;\vx))-\lambda t(\lambda;\vx)^p \;\ge\; \ell(\vtheta,\vx).
			$
			Using $L$-Lipschitz continuity of $\ell(\vtheta,\cdot)$  and $\|\vv\|_2\le C_{p,m_2}\|\vv\|_p$ for all $\vv\in\RR^{m_2}$, we arrive at
			\begin{equation}\label{eq:basic-ti}
				\lambda\,t(\lambda;\vx)^p \;\le\; \ell(\vtheta,\vz(\lambda;\vx))-\ell(\vtheta,\vx)
				\;\le\; L\,\|\vz(\lambda;\vx)-\vx\|_2
				\;\le\; L\,C_{p,m_2}\,t(\lambda;\vx).
			\end{equation} 
			For $\lambda>0$, dividing \eqref{eq:basic-ti} by $t(\lambda;\vx)$ yields
			$
			t(\lambda;\vx)^{p-1}\ \le\ \frac{L\,C_{p,m_2}}{\lambda}
			$, and hence $
			-t(\lambda;\vx)^p \ge- \Big(\frac{L\,C_{p,m_2}}{\lambda}\Big)^{\frac{p}{p-1}}.
			$
			Since this bound holds for every maximizer, we have \eqref{eq:partialphi}.
			
			By the optimality condition of problem \eqref{eq:worstriskdual}, 
			any minimizer $\lambda^*$ satisfies
			\[
			0\in \partial J(\lambda^*)+\mathcal{N}_{[0,\infty)}(\lambda^*).
			\]
			If $\lambda^*>0$, then $0\in\partial J(\lambda^*)=\delta^p+\partial \mathbb{E}_{\vx\sim \mathbb{P}}[\phi(\lambda^*;\vx)]\subseteq \delta^p+\mathbb{E}_{\vx\sim \mathbb{P}}[\partial\phi(\lambda^*;\vx)]$, where the second inclusion comes from~\citep[Thm.~1]{burke2020subdifferential}.  Therefore, there exists a measurable selection
			$s(\lambda^*;\vx)\in\partial\phi(\lambda^*;\vx)$ with
			\[
			0=\delta^p+\mathbb{E}_{\vx\sim\mathbb{P}}[s(\lambda^*;\vx)]
			\ \ge\ \delta^p-\Big(\tfrac{L\,C_{p,m_2}}{\lambda^*}\Big)^{\frac{p}{p-1}}.
			\]
			Hence $\lambda^*\le L\,C_{p,m_2}\,\delta^{-(p-1)}$. If $\lambda^*=0$, the bound is trivial. 
			
			\paragraph{Case of $p=1$.}
			For any $\vz\in\cZ$,
			\[
			\ell(\vtheta,\vz)-\lambda\|\vz-\vx\|_1
			\le \ell(\vtheta,\vx)+L\|\vz-\vx\|_2-\lambda\|\vz-\vx\|_1
			\le \ell(\vtheta,\vx)+(L-\lambda)\|\vz-\vx\|_1.
			\]
			Thus $\phi(\lambda;\vx)\le \ell(\vtheta,\vx)$ for all $\lambda\ge L$. Since $\vz=\vx\in\cZ$, 
			it holds
			$\phi(\lambda;\vx)= \ell(\vtheta,\vx)$ for $\lambda\ge L$. Therefore
			$
			J(\lambda)=\lambda\,\delta+\mathbb{E}_{\vx\sim\mathbb{P}}[\ell(\vtheta,\vx)]
			$
			is strictly increasing on $[L,\infty)$ (because $\delta>0$), so any minimizer satisfies $\lambda^*\le L$ (equivalently, $\lambda^*\le L\,C_{1,m_2}\,\delta^{0}$ with $C_{1,m_2}=1$).
			
			The proof is then completed by combining the above two cases.
			\endproof
		}

		\section{Methods to Generate a Stochastic Gradient Estimator}\label{sec:C}
In this section, we detail two situations under which the gradient estimation condition~\eqref{eq:gradinetbias2} can be satisfied. As noted in Remark~\ref{rem:exact},
if, for all $\vx\in\cX$, 
the function \(\Psi(\vy, \cdot; \vx)\) exhibits certain structures, we can efficiently generate a stochastic gradient estimator.
For simplicity of notations, we fix $\vx\in\cX$ 
in this section, and
define $H(\vz):=\Psi(\vy, \vz; \vx)$.

\subsection{Computing a Stochastic Gradient Estimator with a Specific Loss Function and Support Set}\label{appen:piece} 
As noted in Remark~\ref{rem:exact}, 
if  we can compute 
\(\mathbb{E}_{\vz\sim \zeta}[e^{H(\vz)/\mu}]\)
for all \(\mu > 0\), we can generate a desired stochastic gradient estimator.
We now describe several cases, in which this expectation can be computed. When \(H(\cdot)\) is a linear function and $\zeta$ is the uniform distribution over its support set, 
we can calculate  \(\mathbb{E}_{\vz\sim \zeta}[e^{H(\vz)/\mu}]\). 
This computation often reduces to evaluating the integrals of \(e^{\va^\top \vz + c}\) over \(\mathcal{Z}\).
 Notably, \citep{Barvinok1992ComputingTV} demonstrates that for several structured sets $\mathcal Z$, such exponential integrals admit efficient finite-dimensional formulas or algorithms. Specifically, the authors provide efficient methods for the following four cases:
\begin{enumerate}
	\item[(i)] \(\mathcal{Z}\) is a regular simplex, i.e., \(\mathcal{Z}=\left\{\vz\in \mathbb{R}_{+}^{m_2}: \mathbf{1}_{m_2}\zz\vz=1\right\}\);
	\item[(ii)] \(\mathcal{Z} = [0,r]^{m_2}\) is a \(m_2\)-dimensional cube;
	\item[(iii)] \(\mathcal{Z}=\operatorname{conic}\left\{\vu_1, \ldots, \vu_s\right\} \subset \mathbb{R}^{m_2}\) is a   simple convex cone given as the conic hull of its extreme rays \(\vu_1, \ldots, \vu_s \in \RR^{m_2}\);
	\item[(iv)] $\mathcal{Z}$ is an intersection of several half-spaces.
\end{enumerate}

\subsection{A Sampling Method to Generate a Stochastic Gradient Estimator with a Connected Compact set $\cZ$}
\label{app:localized-small-ball-centered}
In this section, we consider the case where $\cZ$ is a connected compact set, and introduce an algorithmic framework that outputs an approximate gradient $\mathcal{G}_{k_j}(\vy^{(k)},\mu_k)$ satisfying condition~\eqref{eq:gradinetbias2}, i.e.,
\begin{equation}\label{eq:gradinetbias2-restate}
	\mathbb{E}\!\big[\|\mathcal{G}_{k_j}(\vy^{(k)},\mu_k)
	-\nabla_{\vy}\widetilde\Phi(\vy^{(k)},\mu_k;\vx_{k_j})\|^2\big]
	\le \widehat\epsilon_k^2.
\end{equation}
The construction is used at a fixed outer iteration $k$ and a fixed sample $\vx_{k_j}$. For simplicity, in the remaining part of this section, unless with further specification, we denote and fix 
\begin{equation}\label{eq:fix-x-y-mu}
(\vx,\vy,\mu)=(\vx_{k_j},\vy^{(k)},\mu_k).    
\end{equation} We will build a localized estimator for
\(\nabla_{\vy}\widetilde\Phi(\vy,\mu;\vx)\) and choose its sample size and localization radius so that the mean-square error is at most \(\widehat\epsilon_k^2\).

The estimator analyzed here is a localized version of the LME Gibbs estimator.
We define 
\begin{equation}
	\label{eq:app-gap-definition}
\begin{aligned}    
	&F(\vz) = F_{\vx,\vy}(\vz)
	:=
	\Phi(\vy;\vx)-\Psi(\vy,\vz;\vx)
	=
	\max_{\vu\in\cZ}\Psi(\vy,\vu;\vx)-\Psi(\vy,\vz;\vx)\ge0, \\
    & w_\mu(\vz):=\exp\!\left(-\mu^{-1}{F(\vz)} \right).
\end{aligned}    
\end{equation}
Then we have
$
	\exp\!\left(\mu^{-1}{\Psi(\vy,\vz;\vx)} \right)
	=
	\exp\!\left(\mu^{-1}{\Phi(\vy;\vx)} \right)
	w_\mu(\vz).
$
For a localization radius \(r>0\), define the near-optimal level set
\begin{equation}
	\label{eq:app-local-set}
	    \cU_r=\cU_r(\vx,\vy):=
    \big\{\vz\in\cZ:F_{\vx,\vy}(\vz)\le r\big\},
\end{equation}
and let \(\nu_{r}\) be the conditional uniform law on \(\cU_r\):
\begin{equation}
	\label{eq:app-conditional-law}
	\nu_{r}(A):=
	\frac{\zeta(A\cap\cU_r)}{\zeta(\cU_r)},
	\quad \text{for any measurable set }A\subseteq\cZ.
\end{equation}
Moreover, we define
	$$
		D_r=\int_{\cU_r}e^{-F(\vz)/\mu}\,\zeta(d\vz),
		\text{ and }
		D_r^{\rm out}=\int_{\cZ\setminus\cU_r}e^{-F(\vz)/\mu}\,\zeta(d\vz).
$$
Notice that \(D_r\) is the contribution of the local region \(\cU_r\)
to the Gibbs partition function, and \(D_r^{\rm out}\) is the
contribution from the complement \(\cZ\setminus \cU_r\).

\begin{algorithm}[t]
	\caption{A method to generate a gradient estimator}
	\label{alg:app-local-small-ball}
	\begin{algorithmic}[1]
		\Require Fixed sample \(\vx\), outer iterate \(\vy\), 
        {smoothing parameter} \(\mu\), radius \(r\), sample size \(M\), and access to exact i.i.d. samples from \(\nu_{r}\) defined in \eqref{eq:app-conditional-law}.
		\State Draw \(\vz_1,\ldots,\vz_M\stackrel{\rm i.i.d.}{\sim}\nu_{r}\).
		\State Return
		$
		G_{\vx,r}(\vy,\mu)
        = \frac{\sum_{j=1}^M \exp(\mu^{-1}{\Psi(\vy,\vz_j;\vx)} )\nabla_{\vy}\Psi(\vy,\vz_j;\vx)}
		{\sum_{j=1}^M \exp(\mu^{-1}{\Psi(\vy,\vz_j;\vx)} )}
        .$ 
	\end{algorithmic}
\end{algorithm}

With the above notations, we present the method to generate a stochastic gradient estimator in Algorithm~\ref{alg:app-local-small-ball}. 
Equivalently, the output of Algorithm~\ref{alg:app-local-small-ball} equals
\begin{equation}
	\label{eq:app-local-estimator}
	G_{\vx,r}(\vy,\mu)
	=
	\frac{\sum_{j=1}^M
		\exp\!\left(-\mu^{-1}F(\vz_j)\right)
		\nabla_{\vy}\Psi(\vy,\vz_j;\vx)}
	{\sum_{j=1}^M
		\exp\!\left(-\mu^{-1}F(\vz_j)\right)} = \frac{\sum_{j=1}^M w_\mu(\vz_j)\nabla_{\vy}\Psi(\vy,\vz_j;\vx)}
		{\sum_{j=1}^M w_\mu(\vz_j)}.
\end{equation}

To guarantee that \(\mathcal{G}_{k_j}(\vy^{(k)},\mu_k):=G_{\vx,r}(\vy,\mu)\) with notation given in \eqref{eq:fix-x-y-mu} 
satisfies the condition in~\eqref{eq:gradinetbias2-restate}, we need a  
flat-maximum condition assumed below.  It is a positive-measure near-optimal-level-set condition. Related positive-measure optimality regions appear in random search through the essential optimum \citep{solis1981minimization}; near-optimality dimension in optimistic optimization and \(X\)-armed bandits quantifies the size of near-optimal sets~\citep{munos2011optimistic,bubeck2011xarmed}; in continuous evolutionary optimization it is explicitly described as a level set of positive Lebesgue measure~\citep{glasmachers2020global}. 
Further discussions are provided in
Remarks~\ref{rem:app-eb-vs-lower-growth}--%
\ref{rem:app-flat-plateau-and-isolated-maximizers}.

\begin{assumption}[Flat maximum]
    \label{ass:app-localized-eb}
Suppose that $\cZ$ is a connected compact set. There exists a constant $c_{\rm flat}\in(0,1]$ such that, for every $\vx\in\cX$, $\vy\in\dom(\varphi)$, $\mu\in(0,1]$, and every $r\ge\mu$,
\begin{equation}
    \label{eq:app-flat-maximum-assumption}
    \zeta\!\left(\cU_r\right) = \zeta\!\left(\cU_r(\vx,\vy)\right)
    \ge c_{\rm flat}\mu.
\end{equation}
\end{assumption}



The next theorem gives the bound on the difference between \(G_{\vx,r}(\vy,\mu)\) and \(\nabla_{\vy}\widetilde\Phi(\vy,\mu;\vx)\) in expectation.

\begin{theorem}
    \label{lem:app-localized-centered-s1}
Suppose Assumptions~\ref{ass:problemsetup},~\ref{ass:compact1}, and~\ref{ass:app-localized-eb} hold. Fix $\vy\in\dom(\varphi)$ and $\mu\in(0,1]$. Choose $r\ge\mu$ and $M\in\mathbb N$, and let $G_{\vx,r}(\vy,\mu)$ be the output of Algorithm~\ref{alg:app-local-small-ball}. 
Then
\begin{align}
    \label{eq:app-total-error-bound}
    &\mathbb{E}\!\left[\left\|G_{\vx,r}(\vy,\mu)-\nabla_{\vy}\widetilde\Phi(\vy,\mu;\vx)\right\|^2\right] \notag\\
    &\quad\le
    \frac{32l_{\Psi}^2}{M\kappa_{\rm flat}}
    +8l_{\Psi}^2\exp\!\left(-\frac{M\kappa_{\rm flat}}{8}\right)
    +8l_{\Psi}^2\left(\frac{D_r^{\rm out}}{D_r+D_r^{\rm out}}\right)^2 \notag\\
    &\quad\le
    \frac{32l_{\Psi}^2}{M\kappa_{\rm flat}}
    +8l_{\Psi}^2\exp\!\left(-\frac{M\kappa_{\rm flat}}{8}\right)
    +\frac{8l_{\Psi}^2}{c_{\rm flat}^2\mu^2}
      \exp\!\left(-2\left(\frac{r}{\mu}-1\right)\right),
\end{align}
where
$
    \kappa_{\rm flat}:=e^{-1}c_{\rm flat}\mu.
$
Consequently, for any target accuracy $\widehat\epsilon>0$, if
\begin{equation}
    \label{eq:app-radius-under-eb}
    r\ge
    \mu\left(1+
    \left[\log\!\left(\frac{4l_{\Psi}}{c_{\rm flat}\mu\widehat\epsilon}\right)\right]_+
    \right)
\end{equation}
and
\begin{equation}
    \label{eq:app-fixed-mu-sample-size}
    M\ge
    \left\lceil
    \max\left\{
    \frac{128e\,l_{\Psi}^2}{c_{\rm flat}\mu\widehat\epsilon^2},
    \frac{8e}{c_{\rm flat}\mu}
    \left[\log\!\left(\frac{32l_{\Psi}^2}{\widehat\epsilon^2}\right)\right]_+
    \right\}
    \right\rceil,
\end{equation}
then $\mathbb{E}\!\left[\left\|G_{\vx,r}(\vy,\mu)-
    \nabla_{\vy}\widetilde\Phi(\vy,\mu;\vx)\right\|^2\right]
    \le\widehat\epsilon^2.$
\end{theorem}

\begin{proof}
We define the localized  counterpart of
\eqref{eq:app-local-estimator} by   
\begin{equation}
	\label{eq:app-local-population}
	g_{\vx,r}(\vy,\mu)
	:=
	\frac{\int_{\cU_r(\vy)}
		\exp\!\left(-\mu^{-1}F(\vz)\right)
		\nabla_{\vy}\Psi(\vy,\vz;\vx)\,\zeta(d\vz)}
	{\int_{\cU_r(\vy)}
		\exp\!\left(-\mu^{-1}F(\vz)\right)\,\zeta(d\vz)}.
\end{equation} 
Also, let the full Gibbs distribution and its localization to \(\cU_r\) be
$$
		\pi(d\vz):=
		\frac{e^{-F(\vz)/\mu}}{D_r+D_r^{\rm out}}\,\zeta(d\vz)
		\text{ and }
		\pi_r(d\vz):=
		\frac{e^{-F(\vz)/\mu}\mathbf{1}_{\cU_r}(\vz)}{D_r}\,\zeta(d\vz),
$$
respectively. By the definition of \(\widetilde\Phi\) in \eqref{eq:smoothphi}, its gradient
with respect to \(\vy\) is obtained by differentiating the corresponding
LME function. Under the interchange of differentiation
and integration, this gives $\nabla_{\vy}\widetilde\Phi(\vy,\mu;\vx)
		=
		\int_{\cZ}\nabla_{\vy}\Psi(\vy,\vz;\vx)\,\pi(d\vz)$.  {In addition, by the definition of $\pi_r$, it is straightforward to have}  
	\begin{equation}
		\label{eq:app-gradient-population-as-gibbs} 
		g_{\vx,r}(\vy,\mu)=
		\int_{\cZ}\nabla_{\vy}\Psi(\vy,\vz;\vx)\,\pi_r(d\vz).
	\end{equation}
%
Using \(\|\nabla_{\vy}\Psi(\vy,\vz;\vx)\|\le l_{\Psi}\) and the
definitions of \(\pi\) and \(\pi_r\), we obtain
\begin{align}
	&\big\|g_{\vx,r}(\vy,\mu)
	-\nabla_{\vy}\widetilde{\Phi}(\vy,\mu;\vx)\big\| \notag\\
	= &
	\left\|
	\frac{D_r^{\rm out}}{D_r+D_r^{\rm out}}
	\int_{\cU_r}
	\nabla_{\vy}\Psi(\vy,\vz;\vx)\,\pi_r(d\vz)
	-
	\frac{1}{D_r+D_r^{\rm out}}
	\int_{\cZ\setminus\cU_r}
	e^{-F(\vz)/\mu}
	\nabla_{\vy}\Psi(\vy,\vz;\vx)\,\zeta(d\vz)
	\right\| \notag\\
	\le &
	l_{\Psi}\frac{D_r^{\rm out}}{D_r+D_r^{\rm out}}
	+
	\frac{l_{\Psi}}{D_r+D_r^{\rm out}}
	\int_{\cZ\setminus\cU_r}e^{-F(\vz)/\mu}\,\zeta(d\vz) \notag\\
	= &
	\frac{2l_{\Psi}D_r^{\rm out}}{D_r+D_r^{\rm out}}.
	\label{eq:app-local-sampling-bias-flat}
\end{align}
For every $\vz\notin\cU_r$, $F(\vz)>r$, and hence
\begin{equation}
    \label{eq:app-outside-denominator}
    D_r^{\rm out}
    \le e^{-r/\mu}\zeta(\cZ\setminus\cU_r)
    \le e^{-r/\mu}.
\end{equation}
Because $r\ge\mu$, one has $\cU_\mu\subseteq\cU_r$. On $\cU_\mu$, it holds that $e^{-F(\vz)/\mu}\ge e^{-1}$, and Assumption~\ref{ass:app-localized-eb} gives $\zeta(\cU_\mu)\ge c_{\rm flat}\mu$. Thus
\begin{equation}
    \label{eq:app-inside-denominator-flat}
    D_r\ge\int_{\cU_\mu}e^{-F(\vz)/\mu}\,\zeta(d\vz)
    \ge e^{-1}c_{\rm flat}\mu
    =\kappa_{\rm flat}.
\end{equation}
Combining the above two inequalities gives
\begin{equation}
    \label{eq:app-explicit-tail-under-eb}
    \big\|g_{\vx,r}(\vy,\mu)
	-\nabla_{\vy}\widetilde{\Phi}(\vy,\mu;\vx)\big\| \le \frac{2l_{\Psi} D_r^{\rm out}}{D_r+D_r^{\rm out}}\le\frac{2l_{\Psi} D_r^{\rm out}}{D_r}
    \le\frac{2l_{\Psi}}{c_{\rm flat}\mu}
       \exp\!\left(1-\frac{r}{\mu}\right).
\end{equation}

	It remains to bound the sampling error $\|G_{\vx,r}(\vy,\mu)-g_{\vx,r}(\vy,\mu)\|^2$.      
Let
$m_r:=\mathbb E_{\nu_r}[w_\mu(\vz)]
    =\frac{D_r}{\zeta(\cU_r)}.$
Since $\zeta(\cU_r)\le1$,~\eqref{eq:app-inside-denominator-flat} implies $m_r\ge\kappa_{\rm flat}$. Define
\[
    \overline W:=\frac1M\sum_{j=1}^M w_\mu(\vz_j),
    \qquad
    \overline\xi:=\frac1M\sum_{j=1}^M w_\mu(\vz_j)
    \big\{\nabla_{\vy}\Psi(\vy,\vz_j;\vx)-g_{\vx,r}(\vy,\mu)\big\}.
\]
Then \(G_{\vx,r}(\vy,\mu)-g_{\vx,r}(\vy,\mu)=\overline\xi/\overline W\), \(\mathbb{E}_{\nu_{r}}[\overline\xi]=0\), and \(\|\nabla_{\vy}\Psi(\vy,\vz;\vx)-g_{\vx,r}(\vy,\mu)\|\le2l_{\Psi}\).  Moreover, since \(0\le w_\mu\le1\) {and $\{\vz_j\}$ are i.i.d.,}
\begin{equation}
    \label{eq:app-xi-second-moment-flat}
    \mathbb E_{\nu_r}\!\left[\|\overline\xi\|^2\right]=
		\frac1M\mathbb{E}_{\nu_{r}}\!\big[w_\mu(\vz)^2\|\nabla_{\vy}\Psi(\vy,\vz;\vx)-g_{\vx,r}(\vy,\mu)\|^2\big]
    \le\frac{4l_{\Psi}^2m_r}{M}.
\end{equation}
Let $\mathcal E$ be the event $\{\overline W\ge m_r/2\}$. On $\mathcal E$,
$\|\overline\xi/\overline W\|^2\le4m_r^{-2}\|\overline\xi\|^2$, and therefore
\begin{equation}
    \label{eq:app-sampling-good-event-flat}
    \mathbb E_{\nu_r}\!\left[
    \|G_{\vx,r}(\vy,\mu)-g_{\vx,r}(\vy,\mu)\|^2\mathbf 1_{\mathcal E}
    \right]\le
	\frac{4}{m_r^2}
	\mathbb{E}_{\nu_{r}}\|\overline\xi\|^2
    \le\frac{16l_{\Psi}^2}{Mm_r}
    \le\frac{16l_{\Psi}^2}{M\kappa_{\rm flat}}.
\end{equation}
	By the multiplicative Chernoff bound for independent random variables in \([0,1]\)~\citep[Theorem~1.1]{dubhashi2009concentration}, we have
	\begin{equation}
		\label{eq:app-denominator-bad-event-flat}
		\mathbb{P}(\mathcal E^c)
		=\mathbb{P}(\overline W<m_r/2)
		\le
		\exp\!\left(-\frac{Mm_r}{8}\right)
		\le
		\exp\!\left(-\frac{M\kappa_{\rm flat}}{8}\right).
	\end{equation}
	Since both \(G_{\vx,r}(\vy,\mu)\) and \(g_{\vx,r}(\vy,\mu)\) are linear combinations of vectors with norm at most \(l_{\Psi}\), their distance is at most \(2l_{\Psi}\).  Combining this fact with \eqref{eq:app-sampling-good-event-flat} and \eqref{eq:app-denominator-bad-event-flat} gives
	\begin{equation}
		\label{eq:app-local-sampling-error-flat}
		\mathbb{E}_{\nu_{r}}\left[\big\|G_{\vx,r}(\vy,\mu)-g_{\vx,r}(\vy,\mu)\big\|^2\right]
		\le
		\frac{16l_{\Psi}^2}{M\kappa_{\rm flat}}
		+
		4l_{\Psi}^2\exp\!\left(-\frac{M\kappa_{\rm flat}}{8}\right).
	\end{equation}
	Combining \eqref{eq:app-explicit-tail-under-eb} and \eqref{eq:app-local-sampling-error-flat} with \(\|a+b\|^2\le2\|a\|^2+2\|b\|^2\) proves \eqref{eq:app-total-error-bound}.
\end{proof}

 \begin{remark}
\label{rem:app-eb-vs-lower-growth}
Theorem~\ref{lem:app-localized-centered-s1} separates the sampling error from the localization bias. Under Assumption~\ref{ass:app-localized-eb}, the required sample size is
$M=\widetilde O(\mu^{-1} \widehat\epsilon^{-2})$.
\end{remark}

\begin{remark}
\label{rem:app-flat-plateau-and-isolated-maximizers}
A simple sufficient condition for
	\eqref{eq:app-flat-maximum-assumption} is a uniform plateau at the
	maximum, namely there exists \(c_{\rm flat}>0\) such that
	$
		\zeta(\cS_{\vx,\vy})\ge c_{\rm flat}
		\text{ for all } \vx \text{ and } \vy ,
	 $
where $ \cS_{\vx,\vy}:=\Argmax_{\vz\in\cZ}\Psi(\vy,\vz;\vx)$.     
	Indeed, \(F_{\vx,\vy}=0\) on \(\cS_{\vx,\vy}\), and
	\(\cS_{\vx,\vy}\subseteq \cU_r(\vx,\vy)\) for every \(r>0\).  Hence this
	plateau condition implies \eqref{eq:app-flat-maximum-assumption}.
The plateau condition, however, is not suitable when the maximizer set
	has zero reference measure.  This case arises, for example, when
	\(\Psi(\vy,\cdot;\vx)\) is strongly concave as its   maximizer is
	unique.  In such settings, we consider an alternative condition.
    
Suppose that, for some $p>0$ and $c_p>0$,
\begin{equation}
    \label{eq:app-polynomial-volume-growth}
    \zeta(\cU_t(\vx,\vy))\ge c_p t^p,
    \quad \forall\, 0<t\le1.
\end{equation}
Then the proof remains valid with
$\kappa_{\rm flat}$ replaced by $e^{-1}c_p\mu^p$. Consequently,
$M=\widetilde O(\mu^{-p}\widehat\epsilon^{-2})$, and it is sufficient to take
$
    r\ge\mu\left(1+
    \left[\log\!\left(\frac{4l_{\Psi}}{c_p\mu^p\widehat\epsilon}\right)\right]_+
    \right)
$
{in Algorithm~\ref{alg:app-local-small-ball}.} 
This formulation covers isolated maximizers. For example, suppose that a maximizer $\vz^*$ satisfies the local growth bound $F(\vz)\le L\|\vz-\vz^*\|^s$ and the reference measure satisfies
$\zeta(B(\vz^*,\rho)\cap\cZ)\ge\kappa\rho^q$. Then, for all sufficiently small $t$, it holds $\zeta(\cU_t)\ge\kappa L^{-q/s}t^{q/s}.$
Thus the exponent is $p=q/s$; the case $p=1$ in Assumption~\ref{ass:app-localized-eb} is the linear-growth specialization.
\end{remark}

	\section{Proofs of the Main Results}\label{sec:proof}

In this section we provide proofs of our results presented in Section~\ref{sec:algorithm}.

	\subsection{Proof of Lemma~\ref{lem:smoothphi}}\label{sec:smoothphi}
	\proof
	(a) 
	We take $\mu\downarrow 0$ in $\mu \log \mathbb{E}_{\vz\sim \zeta} [e^{\Psi(\vy,\vz;\vx)/\mu}]$ to obtain
	\begin{align}
		\notag
		& \lim _{\mu \downarrow 0} \mu \log \mathbb{E}_{\vz\sim \zeta} [e^{\Psi(\vy,\vz;\vx)/\mu} ]= \lim _{\beta \rightarrow \infty} \frac{1}{\beta} \log \mathbb{E}_{\vz\sim \zeta}[e^{\beta\Psi(\vy,\vz;\vx) }]  \stackrel{(i)}{=}\lim_{\beta \rightarrow \infty}  \nabla_\beta \log \mathbb{E}_{\vz\sim \zeta}[e^{\beta\Psi(\vy,\vz;\vx)  }]   \\
		= & {\lim _{\beta \rightarrow \infty} \frac{ \nabla_\beta \mathbb{E}_{\vz\sim \zeta}[e^{\beta\Psi(\vy,\vz;\vx)  } ]}{\mathbb{E}_{\vz\sim \zeta}[e^{\beta\Psi(\vy,\vz;\vx)  }] }   \stackrel{(ii)}{=}
			\lim _{\beta \rightarrow \infty} \frac{  \mathbb{E}_{\vz\sim \zeta}[\nabla_\beta e^{\beta\Psi(\vy,\vz;\vx)  } ]}{\mathbb{E}_{\vz\sim \zeta}[e^{\beta\Psi(\vy,\vz;\vx)  }] }   }
		=
		\lim _{\beta \rightarrow \infty} \frac{ \mathbb{E}_{\vz\sim \zeta}[e^{\beta\Psi(\vy,\vz;\vx)  } \Psi(\vy,\vz;\vx)]}{\mathbb{E}_{\vz\sim \zeta}[e^{\beta\Psi(\vy,\vz;\vx)  }] }  
		\label{eq:limitu}
		\\\notag 
		=
		&\lim _{\beta \rightarrow \infty} \frac{ \mathbb{E}_{\vz\sim \zeta}[e^{\beta\Psi(\vy,\vz;\vx) -\beta \max_{\vz\in \mathrm{supp}(\zeta)}\Psi(\vy,\vz;\vx)  } \Psi(\vy,\vz;\vx)]}{\mathbb{E}_{\vz\sim \zeta}[e^{\beta\Psi(\vy,\vz;\vx) -\beta \max_{\vz\in \mathrm{supp}(\zeta)}\Psi(\vy,\vz;\vx) }] }  
		\stackrel{(iii)}{=}  \max_{\vz \in \mathrm{supp}(\zeta)}\Psi(\vy,\vz;\vx) \stackrel{(iv)}{=}\max _{\vz \in \mathcal{Z}}\Psi(\vy,\vz;\vx).
	\end{align}
	Here (i) comes from L'Hôpital's rule~\citep{taylor1952hospital}.
	When $\mathcal{Z}$ is a finite set, (ii) holds directly; and when $\mathcal{Z}$ is a connected compact  set, (ii) holds by Leibniz integral rule~\citep{flanders1973differentiation} and the continuity of $e^{\beta\Psi(\vy,\vz;\vx)  } \Psi(\vy,\vz;\vx)$ with respect to~$\beta$.
	(iii) results from the fact that 
	$\vz$ {is a continuous random variable},   
	\begin{equation*}
		\begin{aligned}
			&\lim _{\beta \rightarrow \infty} e^{\beta\Psi(\vy,\vz;\vx) -\beta \max_{\vz\in \mathrm{supp}(\zeta)}\Psi(\vy,\vz;\vx)} = \left\{
			\begin{aligned}
				1, &\text{ if } \vz \in\argmax_{\vz\in \mathrm{supp}(\zeta)}\Psi(\vy,\vz;\vx) \\ 
				0, & \text{ otherwise}.
			\end{aligned}
			\right., \text{ and}\\
			&\lim _{\beta \rightarrow \infty} e^{\beta\Psi(\vy,\vz;\vx) -\beta \max_{\vz\in \mathrm{supp}(\zeta)}\Psi(\vy,\vz;\vx)} \Psi(\vy,\vz;\vx) = \left\{
			\begin{aligned}
				\Psi(\vy,\vz;\vx), &\text{ if } \vz \in\argmax_{\vz\in \mathrm{supp}(\zeta)}\Psi(\vy,\vz;\vx) \\ 
				0, \quad\quad& \text{ otherwise}.
			\end{aligned}
			\right.,
		\end{aligned}
	\end{equation*}
	In addition, (iv) follows from the definition of $\zeta$.
	
	Notice that $\mu \log \mathbb{E}_{\vz\sim \zeta} [e^{\Psi(\vy,\vz;\vx)/\mu} ]$ is continuously differentiable with respect to~$\vy$ by Assumption~\ref{ass:problemsetup}. We then obtain that  $\widetilde{\Phi}(\cdot,\cdot;\vx)$ is a smoothing function of $\Phi(\cdot;\vx)$ from Definition~\ref{def:smooth}.
	
	(b) The $\vy$ and $\mu$ partial gradients of $\widetilde{\Phi}(\cdot,\cdot;\vx)$ are derived by direct calculation. 
	For the $\vy$-partial gradient, we have
	\begin{align*}
		\left\|\nabla_{\vy} 
		\widetilde{\Phi}(\vy,\mu;\vx)\right\| &=  \left\|\frac{\mathbb{E}_{\vz\sim \zeta}[e^{\Psi(\vy,\vz;\vx) / \mu}  \nabla_{\vy}\Psi(\vy,\vz;\vx)] }{\mathbb{E}_{\vz\sim \zeta}[e^{\Psi(\vy,\vz;\vx) / \mu}] }\right\| 
		\leq \frac{\mathbb{E}_{\vz\sim \zeta}\left[\left\|e^{\Psi(\vy,\vz;\vx) / \mu}  \nabla_{\vy}\Psi(\vy,\vz;\vx)\right\| \right]  }{\left\|\mathbb{E}_{\vz\sim \zeta}[e^{\Psi(\vy,\vz;\vx) / \mu}]\right\|  }\\
		&\leq \frac{\mathbb{E}_{\vz\sim \zeta}\left[l_{\Psi}\left\|e^{\Psi(\vy,\vz;\vx) / \mu}   \right\| \right]  }{\left\|\mathbb{E}_{\vz\sim \zeta}[e^{\Psi(\vy,\vz;\vx) / \mu}]\right\|  }= l_{\Psi}.
	\end{align*}
	For the $\mu$-partial gradient, 
	it holds
	$\lim_{ \mu \downarrow 0} \mu \nabla_\mu \widetilde{\Phi}(\mathbf{y}, \mu;\vx) =0$ by~\eqref{eq:limitu}.
	
	Let $\sigma = \frac{\mu_2}{\mu_1}\leq 1$. We then have 
	\begin{equation*}
		\begin{aligned}
			&\mu_1 \log \left(\mathbb{E}_{\vz\sim \zeta} [e^{\Psi(\vy,\vz;\vx)/\mu_1} ] \right)  - \mu_2 \log \left(\mathbb{E}_{\vz\sim \zeta} [e^{\Psi(\vy,\vz;\vx)/\mu_2} ]\right)  \\ =& \mu_1 \log \left(\mathbb{E}_{\vz\sim \zeta} [e^{\Psi(\vy,\vz;\vx)/\mu_1} ]\right)  - \sigma\mu_1 \log \left(\mathbb{E}_{\vz\sim \zeta} [e^{\Psi(\vy,\vz;\vx)/(\sigma\mu_1)} ]\right) 
			 \\= & \mu_1 \log \frac{\mathbb{E}_{\vz\sim \zeta} [e^{\Psi(\vy,\vz;\vx)/\mu_1} ]}{\left(\mathbb{E}_{\vz\sim \zeta} [e^{\Psi(\vy,\vz;\vx)/(\sigma\mu_1)} ]\right)^\sigma} \leq 0,		\end{aligned}
	\end{equation*} 
	where the inequality follows from Jensen's inequality using $\mathbb{E}\left[X^{1/\sigma}\right] \geq (\mathbb{E}[X])^{1/\sigma}$
	with $X=e^{\Psi(\vy,\vz;\vx)/\mu_1}$. Thus $\widetilde{\Phi}(\vy,\mu_1;\vx)\leq \widetilde{\Phi}(\vy,\mu_2;\vx)$ for any $\mu_1\geq\mu_2>0$.
	
	(c)  Fix \(\vx\in\cX\), \(\mu>0\), and \(\vy_1,\vy_2\in\operatorname{dom}(\varphi)\). 
	For each \(\vy\in\operatorname{dom}(\varphi)\), define the probability measure \(Q_\vy\) on \(\mathcal Z\) by
	\[
	\mathrm{d}Q_\vy(\vz)
	=
	\frac{\exp(\Psi(\vy,\vz;\vx)/\mu)}
	{\mathbb{E}_{\vz\sim\zeta}[\exp(\Psi(\vy,\vz;\vx)/\mu)]}\,\zeta(d\vz).
	\]
	Then
	$
	\nabla_\vy\widetilde\Phi(\vy,\mu;\vx)
	=
	\mathbb{E}_{\vz\sim Q_\vy}[\nabla_\vy\Psi(\vy,\vz;\vx)].
	$
	Let \(\vd:=\vy_1-\vy_2\), \(g_i(\vz):=\nabla_\vy\Psi(\vy_i,\vz;\vx)\) for \(i=1,2\), and \(Q_i:=Q_{\vy_i}\). We decompose
	\[
	\begin{aligned}
		&\|\nabla_\vy\widetilde\Phi(\vy_1,\mu;\vx)-\nabla_\vy\widetilde\Phi(\vy_2,\mu;\vx)\|  \le 
		\|\mathbb{E}_{Q_1}[g_1(\vz)-g_2(\vz)]\|
		+
		\|\mathbb{E}_{Q_1}[g_2(\vz)]-\mathbb{E}_{Q_2}[g_2(\vz)]\|.
	\end{aligned}
	\]
	By Assumption~\ref{ass:lk}, the first term is bounded by
	\begin{align}
		\label{eq:bound1}
			\|\mathbb{E}_{Q_1}[g_1(\vz)-g_2(\vz)]\|
		\le L_\Psi\|\vy_1-\vy_2\|.
	\end{align}
	It remains to bound the second term. Let \(\vy_t:=\vy_2+t\vd\) for \(t\in[0,1]\), and let \(Q_t:=Q_{\vy_t}\). Define
	$
	H(t):=\mathbb{E}_{\vz\sim Q_t}[g_2(\vz)].
	$
	Since \(\|\nabla_\vy\Psi(\vy,\vz;\vx)\|\le l_\Psi\) by Assumption~\ref{ass:lk}, differentiation under the expectation is justified by dominated convergence. 
    By the standard covariance identity for normalized exponential tilts
    \citep[Proposition~3.1]{wainwright2008graphical}, for all \(t\in[0,1]\), we have
	\[
	H'(t)
	=
	\frac{1}{\mu}
	\left(
	\mathbb{E}_{Q_t}\left[g_2(\vz)\left\langle \nabla_\vy\Psi(\vy_t,\vz;\vx),\vd\right\rangle\right]
	-
	\mathbb{E}_{Q_t}[g_2(\vz)]\,
	\mathbb{E}_{Q_t}\left[\left\langle \nabla_\vy\Psi(\vy_t,\vz;\vx),\vd\right\rangle\right]
	\right).
	\]
	For any unit vector \(\vv\), the Cauchy--Schwarz inequality together with Assumption~\ref{ass:compact1} gives
	\[
	\begin{aligned}
		|\vv^\top H'(t)|
		&=
		\frac{1}{\mu}
		\left|
		\operatorname{Cov}_{Q_t}
		\left(
		\vv^\top g_2(\vz),
		\left\langle \nabla_\vy\Psi(\vy_t,\vz;\vx),\vd\right\rangle
		\right)
		\right|  \\
		&\le
		\frac{1}{\mu}
		\sqrt{
			\operatorname{Var}_{Q_t}(\vv^\top g_2(\vz))
			\operatorname{Var}_{Q_t}
			\left(
			\left\langle \nabla_\vy\Psi(\vy_t,\vz;\vx),\vd\right\rangle
			\right)
		}  \\
		&\le 	\frac{1}{\mu} {  \|\vv\|\|g_2(\vz)\| \| \nabla_\vy\Psi(\vy_t,\vz;\vx)\| \|\vd\|}
		\le
		\frac{l_\Psi^2}{\mu}\|\vd\|.
	\end{aligned}
	\]
	Taking the supremum over all unit vectors \(\vv\), we obtain
	$
	\|H'(t)\|
	\le
	\frac{l_\Psi^2}{\mu}\|\vy_1-\vy_2\|.
	$
	Therefore,
	\begin{align}
		\label{eq:bound2}
		\|\mathbb{E}_{Q_1}[g_2(\vz)]-\mathbb{E}_{Q_2}[g_2(\vz)]\|
		=
		\|H(1)-H(0)\|
		\le
		\int_0^1 \|H'(t)\|\,dt
		\le
		\frac{l_\Psi^2}{\mu}\|\vy_1-\vy_2\|.
	\end{align}
	Combining~\eqref{eq:bound1} and~\eqref{eq:bound2} yields
	\[
	\|\nabla_\vy\widetilde\Phi(\vy_1,\mu;\vx)-\nabla_\vy\widetilde\Phi(\vy_2,\mu;\vx)\|
	\le
	\left(L_\Psi+\frac{l_\Psi^2}{\mu}\right)\|\vy_1-\vy_2\|.
	\]
	The proof is then completed.
	\endproof
	
	\subsection{Proof of Lemma~\ref{lem:distancemufinite}}\label{sec:distancemufinite}
	\proof 
	Define $\omega_t:\RR^t \mapsto\RR$  and $\widetilde{b}_t:\RR^t\times \RR \mapsto\RR$ by $$\omega_t(\vx):=\log  \left( \sum_{i=1}^t e^{x_i} \right), \text{ and }\widetilde{b}_t(\vy,\mu):= \mu \log \left(\sum_{i=1}^t e^{y_i / \mu}\right).$$ 
	We have
	$
	\widetilde{b}_t(\vy,\mu) 
	= \mu \omega_t\left(\frac{\vy}{\mu}\right)= \max _{\vx\in D_t}\left\{\langle \vx, \vy\rangle -\mu \omega_t^*(\vx)\right\},
	$
	where $D_t:=\{\mathbf{x} \in \mathbb{R}^t: \mathbf{x} \geq \mathbf{0}, \textbf{1}_t\zz \mathbf{x}=1\}$, and the second equality follows from that   the conjugate function of $\omega$ over   $D_t$ is $\omega_t^*(\vy)=\sum_{i=1}^t y_i \log y_i$
	(cf. \citep[Theorem 4.2]{beck2012smoothing}). 
	Then		
	\begin{align}
\notag
			& \left|\widetilde{b}_t(\vy,\mu_1)-\widetilde{b}_t(\vy,\mu_2)\right|  = \left| \max _{\vx\in D_t }\left\{\langle \vx, \vy\rangle -\mu_2 \omega_t^*(\vx)\right\}-\max _{\vx\in D_t }\left\{\langle \vx, \vy\rangle -\mu_1 \omega_t^*(\vx)\right\}\right|  \\ \notag
			= & \max\bigg\{ \max _{\vx\in D_t }\left\{\langle \vx, \vy\rangle -\mu_2 \omega_t^*(\vx)\right\}-\max _{\vx\in D_t }\left\{\langle \vx, \vy\rangle -\mu_1 \omega_t^*(\vx)\right\}, \\ &\quad\quad\quad\quad\quad \max _{\vx\in D_t }\left\{\langle \vx, \vy\rangle -\mu_1 \omega_t^*(\vx)\right\} - \max _{\vx\in D_t }\left\{\langle \vx, \vy\rangle -\mu_2 \omega_t^*(\vx)\right\}\bigg\}\notag
			\\\notag
			\leq & \max\left\{ \max _{\vx\in{D_t}} (\mu_1-\mu_2)\omega_t^*(\vx), \max _{\vx\in{D_t}} (\mu_2-\mu_1)\omega_t^*(\vx)\right\}\\ \notag
			= &\left(\mu_1-\mu_2\right) \max _{\vx\in{D_t}}-\omega_t^*(\vx)=
			\left(\mu_1-\mu_2\right) \left|\max _{\vx\in{D_t}}[\langle\vzero, \vx\rangle-\omega_t^*(\vx)]\right|
			 \\ = & \omega_t(\vzero)\left(\mu_1-\mu_2\right) =\log(t)\left(\mu_1-\mu_2\right),
			 		\label{eq:boundbtilde}
	\end{align}
	where the fourth equality uses the fact that the conjugate function of $\omega_t^*$ is $\omega_t$ itself, and the inequality holds because for any continuous functions $f_1, f_2: \RR^t \rightarrow \mathbb{R}$,
	$$
	\max _{\vu}\left\{f_1(\vu)-f_2(\vu)\right\}+\max _{\vu}\left\{f_2(\vu)\right\} \geq \max _{\vu }\left\{f_1(\vu)-f_2(\vu)+f_2(\vu)\right\}=\max _{\vu}\left\{f_1(\vu)\right\}.
	$$ 
	Since $\mathcal{Z}$ is a finite discrete set, we let $t=|\mathcal Z|$ and $r_j =\Psi(\vy,\vz_j;\vx)$  for all $j\in[t]$. We have $$\widetilde{\Phi}(\vy,\mu;\vx)=\mu \log \mathbb{E}_{\vz\sim \zeta} [e^{\Psi(\vy,\vz;\vx)/\mu}]  = \mu\log \frac{1}{t} + \mu \log \sum_{j=1}^t e^{\Psi(\vy,\vz_j;\vx)/\mu} =\widetilde{b}_t(\vr,\mu)+\mu\log\frac{1}{t}.$$  Thus it follows from~\eqref{eq:boundbtilde} that
	\begin{align*}
		|\widetilde{\Phi}(\vy,\mu_1;\vx) - \widetilde{\Phi}(\vy,\mu_2;\vx)| 
		\leq & |\widetilde{b}_t(\vr,\mu_1) - \widetilde{b}_t(\vr,\mu_2)| +  \left|\log\frac{1}{t} \right| \left(\mu_1-\mu_2\right)
		\\
		\leq &(\log(t)+\log({t}))(\mu_1-\mu_2)= 2\log(t)(\mu_1-\mu_2),
	\end{align*}
	which indicates the desired result.  
	\endproof

\subsection{Proof of Lemma~\ref{lem:distancemu}}\label{sec:distancemu}
Since $\cZ$ is a connected compact  set endowed with normalized Lebesgue
measure, 
it admits diameter-bounded equal-measure partitions. Hence, there exists a
constant $C_{\cZ}>0$, depending only on $\cZ$ and $m_2$, such that for every
$\rho\in(0,1]$, one can find a measurable partition
\[
\cZ=\bigcup_{i=1}^{M_\rho}\mathcal A_i,
\qquad
\zeta(\mathcal A_i)=\frac{1}{M_\rho},
\qquad
\operatorname{diam}(\mathcal A_i)\le \rho,
\]
with
\[
M_\rho\le \left(\frac{C_{\cZ}}{\rho}\right)^{m_2}.
\]
Here $C_{\cZ}$ is a geometry-dependent constant; for instance, one may take
$C_{\cZ}=1+2 D_{\cZ}$ with $D_{\cZ}$ being the diameter of $\cZ$. See, e.g.,~\citep{rogers1997covering} for covering bounds of connected compact sets and~\citep{gigante2017diameter} for diameter-bounded equal-measure
partitions. In the subsequent lemma, we provide a more detailed and precise formulation of Lemma~\ref{lem:distancemu}.

\begin{lemma}
	Suppose Assumptions~\ref{ass:problemsetup}, \ref{ass:compact1} and~\ref{ass:lk} hold, and $\cZ$ is a connected compact set in
	$\mathbb{R}^{m_2}$. Then, for any $\rho\in(0,1]$,
	$\vy\in \operatorname{dom}(\varphi)$, $1\ge \mu_1>\mu_2>0$, and all $\vx\in\cX$, 
    it holds that
	\[
	\left|
	\widetilde{\Phi}(\vy,\mu_1;\vx)
	-
	\widetilde{\Phi}(\vy,\mu_2;\vx)
	\right|
	\le
	2l_\Psi\rho
	+
	2m_2\log\!\left(\frac{1+2 D_{\cZ}}{\rho}\right)(\mu_1-\mu_2),
	\]
	where $D_{\cZ}$ denotes the diameter of $\cZ$.
	In particular, by setting $\rho=\mu_1-\mu_2$, we obtain
	\[
	\left|
	\widetilde{\Phi}(\vy,\mu_1;\vx)
	-
	\widetilde{\Phi}(\vy,\mu_2;\vx)
	\right|
	\le
	\left[
	2l_\Psi
	+
	2m_2\log\!\left(\frac{1+2 D_{\cZ}}{\mu_1-\mu_2}\right)
	\right](\mu_1-\mu_2).
	\]
\end{lemma}

\begin{proof}
	Fix $\rho\in(0,1]$, $\vy\in\operatorname{dom}(\varphi)$,
	$1\ge \mu_1>\mu_2>0$, and $\vx\in\cX$. 
    Let
	$\{\mathcal A_i\}_{i=1}^{M_\rho}$ be an equal-$\zeta$-{measure}  
    partition of $\cZ$ such that
	\[
	\zeta(\mathcal A_i)=\frac{1}{M_\rho},
	\qquad
	\operatorname{diam}(\mathcal A_i)\le \rho,
	\text{ and }
	M_\rho\le \left(\frac{1+2 D_{\cZ}}{\rho}\right)^{m_2}.
	\]
	Choose an arbitrary representative $\vz_i\in\mathcal A_i$ for each $i\in[M_\rho]$, and define
	the finite discrete LME approximation
	\[
	\widetilde{\Phi}_\rho(\vy,\mu;\vx)
	:=
	\mu\log\!\left(
	\frac{1}{M_\rho}
	\sum_{i=1}^{M_\rho}
	\exp\!\left(\frac{\Psi(\vy,\vz_i;\vx)}{\mu}\right)
	\right),
	\qquad \mu>0.
	\]
	Since $\Psi(\vy,\cdot;\vx)$ is $l_\Psi$-Lipschitz continuous on $\cZ$, for every
	$\vz\in\mathcal A_i$, it holds
	$$
	|\Psi(\vy,\vz;\vx)-\Psi(\vy,\vz_i;\vx)|\le l_\Psi\rho .
	$$
	Using $\zeta(\mathcal A_i)=1/M_\rho$, we get, for every $\mu>0$,
	\[
	e^{-l_\Psi\rho/\mu}
	\frac{1}{M_\rho}
	\sum_{i=1}^{M_\rho}
	e^{\Psi(\vy,\vz_i;\vx)/\mu}
	\le
	\mathbb E_{\vz\sim\zeta}
	\left[e^{\Psi(\vy,\vz;\vx)/\mu}\right]
	\le
	e^{l_\Psi\rho/\mu}
	\frac{1}{M_\rho}
	\sum_{i=1}^{M_\rho}
	e^{\Psi(\vy,\vz_i;\vx)/\mu}.
	\]
	Taking $\mu\log(\cdot)$ {to all sides of the above inequalities} gives
	$
	\left|
	\widetilde{\Phi}(\vy,\mu;\vx)
	-
	\widetilde{\Phi}_\rho(\vy,\mu;\vx)
	\right|
	\le
	l_\Psi\rho,
	$ for all $ \mu>0.
	$
	
	The function $\widetilde{\Phi}_\rho$ is exactly the LME smoothing over the finite
	discrete set $\{\vz_i\}_{i=1}^{M_\rho}$ with the uniform distribution. Therefore, applying
	Lemma~\ref{lem:distancemufinite} with $|\cZ_\rho|=M_\rho$ yields
	\[
	\left|
	\widetilde{\Phi}_\rho(\vy,\mu_1;\vx)
	-
	\widetilde{\Phi}_\rho(\vy,\mu_2;\vx)
	\right|
	\le
	2\log(M_\rho)(\mu_1-\mu_2).
	\]
	Combining the last two displays by the triangle inequality, we obtain
	\[
	\begin{aligned}
		\left|
		\widetilde{\Phi}(\vy,\mu_1;\vx)
		-
		\widetilde{\Phi}(\vy,\mu_2;\vx)
		\right|
		&\le
		2l_\Psi\rho
		+
		2\log(M_\rho)(\mu_1-\mu_2) \\
		&\le
		2l_\Psi\rho
		+
		2m_2\log\!\left(\frac{1+2 D_{\cZ}}{\rho}\right)(\mu_1-\mu_2),
	\end{aligned}
	\]
	where the last inequality follows from
	$M_\rho\le (1+2 D_{\cZ}/\rho)^{m_2}$. Setting
	$\rho=\mu_1-\mu_2$ gives the desired bound in Lemma~\ref{lem:distancemu}.
\end{proof}

\subsection{Proof of Lemma~\ref{lem:smoothed-to-goldstein}}\label{sec:smoothed}

\begin{proof}
	By Assumption~\ref{ass:lk}, for every \(\vz\in\cZ\) and every \(\vx\in\cX\),
    \(\nabla_{\vy}\Psi(\cdot,\vz;\vx)\) is \(L_\Psi\)-Lipschitz continuous.
	Hence
	$
	\Psi(\cdot,\vz;\vx)+\frac{L_\Psi}{2}\|\cdot\|^2
	$
	is convex, or equivalently, \(\Psi(\cdot,\vz;\vx)\) is \(L_\Psi\)-weakly convex.
	Since pointwise maxima preserve weak convexity, it follows that
	$
	\Phi(\cdot;\vx)
	=
	\max_{\vz\in\cZ}\Psi(\cdot,\vz;\vx)
	$
	is \(L_\Psi\)-weakly convex.
	The LME smoothing also preserves weak convexity. Indeed,
	\[
	\widetilde\Phi(\vy,\mu;\vx)
	+
	\frac{L_\Psi}{2}\|\vy\|^2
	=
	\mu
	\log
	\mathbb E_{\vz\sim\zeta}
	\left[
	\exp
	\left(
	\frac{
		\Psi(\vy,\vz;\vx)+\frac{L_\Psi}{2}\|\vy\|^2
	}{\mu}
	\right)
	\right],
	\]
	which is a log-mean-exp form of convex functions. Taking expectation with respect to
	\(\vx\sim P\) and adding the convex function \(\varphi\), 
    we obtain that both \(g\) and
	\(\widetilde g(\cdot,\mu)\) {defined in \eqref{eq:smootho}} are \(L_\Psi\)-weakly convex.
	
	By Lemma~5(b) and the definition of \(\Delta_\mu\) in \eqref{eq:omega}, for every
	\(\vu\in\operatorname{dom}(\varphi)\), it holds that 
	$
	0
	\le
	g(\vu)-\widetilde g(\vu,\mu)
	\le
	\Delta_\mu .
	$
	Fix any \(\vxi_\mu\in\partial\widetilde g(\vy^*,\mu)\). Since
	\(\widetilde g(\cdot,\mu)\) is \(L_\Psi\)-weakly convex, 
    it holds that for all \(\vu\in\operatorname{dom}(\varphi)\),
	\begin{align}
    \label{eq:ggelpsi}
	    	\widetilde g(\vu,\mu)
	\ge
	\widetilde g(\vy^*,\mu)
	+
	\langle \vxi_\mu,\vu-\vy^*\rangle
	-
	\frac{L_\Psi}{2}\|\vu-\vy^*\|^2 .
	\end{align} 
	Combining~\eqref{eq:ggelpsi} with $
	0
	\le
	g(\vu)-\widetilde g(\vu,\mu)
	\le
	\Delta_\mu
	$, we arrive at
	\[
	g(\vu)
	\ge
	g(\vy^*)
	+
	\langle \vxi_\mu,\vu-\vy^*\rangle
	-
	\frac{L_\Psi}{2}\|\vu-\vy^*\|^2
	-
	\Delta_\mu ,
	\qquad
	\forall \vu\in\operatorname{dom}(\varphi).
	\]
For \(\rho>0\), define
\[
	H(\vu)
	:=
	g(\vu)
	-
	\langle \vxi_\mu,\vu-\vy^*\rangle
	+
	\frac{L_\Psi}{2}\|\vu-\vy^*\|^2 .
\]
Then \eqref{eq:ggelpsi} implies
$
	H(\vu)\ge H(\vy^*)-\Delta_\mu,
	\forall\,\vu\in\operatorname{dom}(\varphi).
$
Thus \(\vy^*\) is a \(\Delta_\mu\)-minimizer of \(\min_{\vy}H(\vy)\).  
Without loss of generality, we assume \(\Delta_\mu>0\).

Since \(g\) is \(L_\Psi\)-weakly convex, the function \(H\) is proper, lower
semicontinuous, and convex.  By Ekeland's
variational principle \citep[Proposition~1.43]{roc1998var}, for
every \(r>0\) there exists \(\vy_r\in\dom(\varphi)\) such that  
\begin{equation}
	\label{eq:ekeland-distance}
	\|\vy_r-\vy^*\|\le r, 	\ H(\vy_r)\le H(\vy^*), \text{ and }\operatorname{Argmin}_\vy\left\{H(\vy)+\frac{\Delta_\mu}{r}\|\vy-\vy_r\|\right\}=\{\vy_r\} .
\end{equation}
Therefore,
\[
	\dist\bigl(\vzero,\partial H(\vy_r)\bigr)
	\le \dist \left( \vzero,  \frac{\Delta_\mu}{r}\partial_{\vy} \|\vy-\vy_r\|\mid_{\vy=\vy_{r}} \right) = \dist \left( \vzero, 
	\frac{\Delta_\mu}{r}\left\{\vb\in\mathbb R^{m_1}:\|\vb\|\le1\right\} \right) \le 
	\frac{\Delta_\mu}{r}.
\]
Since we have
$
	\partial H(\vy_r)
	=
	\partial g(\vy_r)
	-
	\vxi_\mu
	+
	L_\Psi(\vy_r-\vy^*),
$ there exists \(\vxi_r\in\partial g(\vy_r)\) such that
\[
	\big\|
	\vxi_r
	-
	\vxi_\mu
	+
	L_\Psi(\vy_r-\vy^*)
	\big\|
	\le
	\frac{\Delta_\mu}{r}.
\]
Since \(\|\vy_r-\vy^*\|\le r\), we have
\(\vxi_r\in\partial^r g(\vy^*)\).  
Consequently,
\[
	dist\bigl(\vxi_\mu,\partial^r g(\vy^*)\bigr) \le \|\vxi_\mu-\vxi_r\|
	\le
	L_\Psi\|\vy_r-\vy^*\|
	+
	\frac{\Delta_\mu}{r}
	\le
	L_\Psi r
	+
	\frac{\Delta_\mu}{r}.
\]
Choose  
	$
	r
	=
	\sqrt{\frac{\Delta_\mu}{L_\Psi}} \le\mu_g.
	$
	Because \(r\le\mu_g\), we have
	\(\partial^r g(\vy^*)\subseteq\partial^{\mu_g} g(\vy^*)\). Hence 
	\[
	\operatorname{dist}
	\left(
	\vxi_\mu,\partial^{\mu_g} g(\vy^*)
	\right)
	\le
	{L_\Psi r}{}
	+
	\frac{\Delta_\mu}{r}
	=
	2\sqrt{L_\Psi\Delta_\mu}.
	\]
	Since the above bound holds for every
	\(\vxi_\mu\in\partial\widetilde g(\vy^*,\mu)\), we obtain 
	\[
	\operatorname{dist}
	\left(
	\vzero,\partial^{\mu_g} g(\vy^*)
	\right) {\le \min_{\vxi_\mu\in\partial\widetilde g(\vy^*,\mu)} \left(\|\vxi_\mu\| + \operatorname{dist}
	\left(
	\vxi_\mu,\partial^{\mu_g} g(\vy^*)
	\right)\right)}
	\le
	\operatorname{dist}
	\left(
	\vzero,\partial\widetilde g(\vy^*,\mu)
	\right)
	+
	2\sqrt{L_\Psi\Delta_\mu}.
	\]
	Taking expectations on both sides and using Jensen's inequality, together with the
	definition of an \(\epsilon\)-scaled stationary point in expectation, gives  
	\[
	\begin{aligned}
			\left(
		\mathbb E
		\left[
		\operatorname{dist}
		\left(
		0,\partial^{\mu_g}g(\vy^{(\tau+1)})
		\right)^2
		\right]
		\right)^{1/2} 
		&\le
		\sqrt{
			2\mathbb{E}
			\left[
			\operatorname{dist}
			\left(
			\vzero,\partial\widetilde g(\vy^*,\mu)
			\right)^2
			\right]
		}
		+
		2\sqrt{2L_\Psi\omega(\mu)}  \\
		&\le
		\sqrt{2}\epsilon
		+
		2\sqrt{2L_\Psi\omega(\mu)} .
	\end{aligned}
	\] 
	This proves that \(\vy^*\) is a
	\((\mu_g,\epsilon_g)\)-Goldstein stationary point in expectation.
\end{proof}

 	\subsection{Proof of Theorem~\ref{lem:scaled}}\label{sec:scaled}
	Before we give the proof of Theorem~\ref{lem:scaled}, we present a necessary lemma.
	\begin{lemma}[Theorem 5.5.2 of ~\citep{cui2021modern}]
		\label{lem:regularcondi}
		Under Assumptions~\ref{ass:problemsetup}--\ref{ass:compact1}, the function $ g$ in~\eqref{eq:minmax} is Clarke regular. 
	\end{lemma}
	By Lemma~\ref{lem:regularcondi}, we know that any Clarke stationary point of problem~\eqref{eq:minmax} is also a directional stationary point.
	
	\proof (of Theorem~\ref{lem:scaled})
	From the definition of $\vy^{(k)}$, 
	there exist $\vgamma_{\vy}^{(k)} \in \partial \varphi (\vy^{(k)}) \subset\RR^{m_1}$ and $0<\mu_k\leq \epsilon_k$
	{such that}
	\begin{equation}
		\label{eq:kkgamma}
		\mathbb{E}\left[\left(\dist\left(\vzero, \partial\widetilde{g}( \vy^{(k)},\mu_k)\right)\right)^2\right]= \mathbb{E}\left[\left\| \mathbb{E}_{\vx\sim\mathbb{P}}[\nabla\widetilde{{\Phi}}( \vy^{(k)},\mu_k;\vx)]+  \vgamma_{\vy}^{{(k)}}\right\|^2\right] \leq \epsilon^2_k, \text{ for all }k\ge 0.
	\end{equation}
	Define 
	\begin{align*}
		&A_k(\delta): = \left\{\left\|\mathbb{E}_{\vx\sim\mathbb{P}}[\nabla\widetilde{{\Phi}}( \vy^{(k)},\mu_k;\vx)]+  \vgamma_{\vy}^{{(k)}}\right\|^2 \ge \delta \right\}, \text{ and }\\ &A^c_k(\delta): = \left\{\left\|\mathbb{E}_{\vx\sim\mathbb{P}}[\nabla\widetilde{{\Phi}}( \vy^{(k)},\mu_k;\vx)]+  \vgamma_{\vy}^{{(k)}}\right\|^2 < \delta \right\}.
	\end{align*} 
	By Markov's inequality and~\eqref{eq:kkgamma},  we have for any $\delta>0$,
	\begin{align*}
		\sum_{k=0}^{\infty}\mathrm{Prob}\left(
		A_k(\delta)
		\right) \leq \sum_{k=0}^{\infty}\frac{1}{\delta}
		\mathbb{E}\left[\left\|\mathbb{E}_{\vx\sim\mathbb{P}}[\nabla\widetilde{{\Phi}}( \vy^{(k)},\mu_k;\vx)]+  \vgamma_{\vy}^{{(k)}}\right\|^2\right] 
		\leq \frac{1}{\delta}\sum_{k=0}^{\infty}\epsilon^2_k<+\infty.
	\end{align*}
	Setting $\delta=\frac{1}{t}$ for $t\in\mathbb{N}_{++}$, we apply the Borel-Cantelli Lemma~\citep{shiryaev2016probability} to the sequence of events $\left(A_k(\delta): k\ge 0\right)$, and obtain
	$ 
	\mathrm{Prob}(\limsup_{k\rightarrow\infty} A_k(\frac{1}{t}))=0.
	$
	By the definition of the limit operator, $\omega$ belongs to the event $$\Omega:=\left\{\omega: \lim_{k\rightarrow\infty}\left\|\mathbb{E}_{\vx\sim\mathbb{P}}[\nabla\widetilde{{\Phi}}( \vy^{(k)}(\omega),\mu_k;\vx)]+  \vgamma_{\vy}^{{(k)}}(\omega)\right\|=0\right\},$$
	if and only if
	$$\omega\in \bigcap_{t=1}^{\infty}\bigcup_{k=1}^{\infty}\bigcap_{i=k}^{\infty} A_i^c(\frac{1}{t})=\left(\bigcup_{t=1}^{\infty}\limsup_{k\rightarrow\infty}A_k(\frac{1}{t})\right)^c.$$
	It then follows $$\mathrm{Prob}\left( \Omega\right)=1-\mathrm{Prob}\left(\bigcup_{t=1}^{\infty}\limsup_{k\rightarrow\infty}A_k(\frac{1}{t})\right)\geq 1-\sum_{t=1}^{\infty}\mathrm{Prob}\left(\limsup_{k\rightarrow\infty}A_k(\frac{1}{t})\right)=1.$$
	Combining this with $\mathrm{Prob}\left(  
	\Omega\right)\leq 1$, we derive $\mathrm{Prob}(\Omega)=1$. This implies~\eqref{eq:almost} by the equation in~\eqref{eq:kkgamma}.
	
	Define event $\overline\Omega:= \{\omega: \lim_{k\rightarrow \infty}\vy^{(j_k)}(\omega) = \vy^{*}(\omega)\}$. According to our setting in this theorem, it holds $\mathrm{Prob}(\overline\Omega)=1$, and thus $\mathrm{Prob}(\Omega \cap \overline\Omega)=1$.
	Let $\omega$ belong to the probability-one event on which
	$
	\left\|
	\mathbb E_{\vx\sim\mathbb P}
	[\nabla_\vy\widetilde\Phi(\vy^{(k)}(\omega),\mu_k;\vx)]
	+\vgamma_\vy^{(k)}(\omega)
	\right\|\to0.
	$
	Suppose $\vy^{(j_k)}(\omega)\to \vy^*(\omega)$. Define
	$
	\vv_k(\omega):=
	\mathbb E_{\vx\sim\mathbb P}
	[\nabla_\vy\widetilde\Phi(\vy^{(j_k)}(\omega),\mu_{j_k};\vx)].
	$
	By Lemma~\ref{lem:smoothphi}, $\|\vv_k(\omega)\|\le l_\Psi$, hence
	$\{\vv_k(\omega)\}$ is bounded. Moreover,
	$\vgamma_\vy^{(j_k)}(\omega)=-\vv_k(\omega)+o(1)$ is also bounded.
	Passing to a further subsequence if necessary, we may assume
	$\vv_k(\omega)\to \vv^*(\omega)$ and
	$\vgamma_\vy^{(j_k)}(\omega)\to\vgamma_\vy^*(\omega)$.
	Next, 
	the gradient-consistency result for the smoothing approximation, together with $\mu_{j_k}\downarrow0$ and $\vy^{(j_k)}\to\vy^\star$, yields
	\[
	\operatorname{dist}\left(
	\vv_k,\partial(g-\varphi)(\vy^\star)
	\right)\to0
	\]
	along the selected subsequence. Hence $\vv^\star\in\partial(g-\varphi)(\vy^\star)$.
	Since $g - \varphi$ is Clarke regular at $\vy^*(\omega)$ by Lemma~\ref{lem:regularcondi}, and given that $\lim_{k\to\infty} \mu_{j_k} = 0$, we invoke \citep[Theorem 1]{burke2020subdifferential}. This allows us to conclude that
	$
	\vv^*(\omega)\in \partial (g-\varphi)(\vy^*(\omega)).
	$
	Since
	$\vgamma_\vy^{(j_k)}(\omega)\in\partial\varphi(\vy^{(j_k)}(\omega))$
	and $\partial\varphi$ is outer semicontinuous~\citep[Theorem 8.6]{roc1998var},
	$
	\vgamma_\vy^*(\omega)\in\partial\varphi(\vy^*(\omega)).
	$
	Taking limits in
	$\vv_k(\omega)+\vgamma_\vy^{(j_k)}(\omega)\to0$ gives
	$0=\vv^*(\omega)+\vgamma_\vy^*(\omega)\in\partial g(\vy^*(\omega))$.
	Therefore $\vy^*(\omega)$ is a Clarke stationary point of problem~\eqref{eq:minmax}.
	By Lemma~\ref{lem:regularcondi}, it is also a directional stationary point for all $\omega\in \Omega\cap \overline\Omega$. Since $\mathrm{Prob}(\Omega \cap \overline{\Omega}) = 1$, $\vy^*$ is a directional stationary point of problem~\eqref{eq:minmax} almost surely.
	\endproof

	\subsection{Proof of Lemma~\ref{lem:alg3}}\label{sec:alg3}
	
	\proof{}
	$($a$)$
	From the updating rule~\eqref{eq:updatex2} with $\alpha_k=\frac{1}{L^{(k)}}$, it follows that
	\begin{equation}
		\label{eq:KKT2}
		L^{(k)}\left(\vy^{(k+1)}-\vy^{(k)}\right)+ \mathcal{G}(\vy^{(k)},\mu_{k}) + \vgamma_{\vy}^{(k+1)} =\vzero, 
	\end{equation}
	for some $\vgamma_{\vy}^{(k+1)}\in \partial \varphi(\vy^{(k+1)}).$
	We derive
	\begin{align}
		\notag
		&\mathbb{E}\left[\widetilde{g}( \vy^{(k+1)},\mu_k)- \widetilde{g}( \vy^{(k)},\mu_k) \right]\\
		\notag	\leq& \mathbb{E}\left[\left\langle\mathbb{E}_{\vx\sim\mathbb{P}}[\nabla_{\vy} \widetilde{{\Phi}}( \vy^{(k)},\mu_k;\vx)], \vy^{(k+1)}- \vy^{(k)}\right\rangle\right]+\mathbb{E}\left[\frac{L^{(k)}}{2}\| \vy^{(k+1)}- \vy^{(k)}\|^2\right] \\ & +\mathbb{E}\left[ \varphi(\vy^{(k+1)}) - \varphi(\vy^{(k)})\right] 
		\notag
		\stackrel{\eqref{eq:KKT2}}{=} \mathbb{E}\left[\left\langle -\vgamma_{\vy}^{(k+1)}-L^{(k)}( \vy^{(k+1)}- \vy^{(k)}), \vy^{(k+1)}- \vy^{(k)}\right\rangle\right] \\ &+ \mathbb{E}\left[\frac{L^{(k)}}{2}\| \vy^{(k+1)}- \vy^{(k)}\|^2\right]   \notag +\mathbb{E}\left[\left\langle\mathbb{E}_{\vx\sim\mathbb{P}}[\nabla_{\vy} \widetilde{{\Phi}}( \vy^{(k)},\mu_k;\vx)]-\mathcal{G}(\vy^{(k)},\mu_{k}), \vy^{(k+1)}- \vy^{(k)}\right\rangle\right]  \\ \notag& + \mathbb{E}\left[\varphi(\vy^{(k+1)}) - \varphi(\vy^{(k)})\right]\\
		\notag\leq & -\frac{L^{(k)}}{2}\mathbb{E} \left[\| \vy^{(k+1)}- \vy^{(k)}\|^2\right]  + \mathbb{E}\left[-\left\langle  \vgamma_{\vy}^{{(k+1)}}, \vy^{(k+1)}- \vy^{(k)}\right\rangle   + \varphi(\vy^{(k+1)}) - \varphi(\vy^{(k)})\right]
		\\\notag&\quad\quad\quad\quad\,\,+ \mathbb{E}\left[\frac{L^{(k)}}{4}\|\vy^{(k+1)}- \vy^{(k)}\|^2 + \frac{1}{L^{(k)}}\left\|\mathbb{E}_{\vx\sim\mathbb{P}} [\nabla_{\vy}\widetilde{{\Phi}}_( \vy^{(k)},\mu_k;\vx)]-\mathcal{G}(\vy^{(k)},\mu_{k})\right\|^2\right] \\
		\leq &-\frac{L^{(k)}}{4}\mathbb{E}\left[\| \vy^{(k+1)}- \vy^{(k)}\|^2\right]+ \frac{1}{L^{(k)}}\mathbb{E}\left[\left\| \mathbb{E}_{\vx\sim\mathbb{P}}[\nabla_{\vy}\widetilde{{\Phi}}( \vy^{(k)},\mu_k;\vx)]-\mathcal{G}(\vy^{(k)},\mu_{k})\right\|^2\right],
		\label{eq:b2}
	\end{align}
	where the first inequality comes from the  $L^{(k)}$-smoothness of $\widetilde\Phi(\cdot,\mu;\vx)$ for each $\vx\in\cX$, the second inequality holds by Young inequality, and the last inequality results from 
	the convexity of $\varphi$ and $\vgamma_{\vy}^{(k+1)}\in \partial \varphi(\vy^{k+1})$.
	
	Next, we bound the second term in the right hand side of~\eqref{eq:b2}, 
	\begin{align}
		\notag
		& \mathbb{E}\left[\left\| \mathbb{E}_{\vx\sim\mathbb{P}}[\nabla_{\vy}\widetilde{{\Phi}}( \vy^{(k)},\mu_k;\vx)]-\mathcal{G}(\vy^{(k)},\mu_{k})\right\|^2\right] 
		\\= \notag&  \mathbb{E}\left[\left\| \mathbb{E}_{\vx\sim\mathbb{P}}[\nabla_{\vy}\widetilde{{\Phi}}( \vy^{(k)},\mu_k;\vx)]-\frac{1}{M_k}\sum^{M_k}_{j=1}
		\mathcal{G}_{k_j}(\vy^{(k)},\mu_{k})\right\|^2\right]
		\\
		\notag\leq & 
		2\mathbb{E}\left[\left\| \frac{1}{M_k} \sum_{j=1}^{M_k}\nabla_{\vy}\widetilde{{\Phi}}( \vy^{(k)},\mu_k;\vx_{k_j})-\frac{1}{M_k}\sum^{M_k}_{j=1}
		\mathcal{G}_{k_j}(\vy^{(k)},\mu_{k})
		\right\|^2\right]
		\\ & \notag\quad\quad\quad\quad\quad + 
		2\mathbb{E}\left[\left\| \mathbb{E}_{\vx\sim\mathbb{P}}[\nabla_{\vy}\widetilde{{\Phi}}( \vy^{(k)},\mu_k;\vx)]-\frac{1}{M_k} \sum_{j=1}^{M_k}\nabla_{\vy}\widetilde{{\Phi}} ( \vy^{(k)},\mu_k;\vx_{k_j})
		\right\|^2\right] 
		\\\notag
		\leq & 2\widehat{\epsilon}_k^2 + 2\mathbb{E}\left[\left\| \mathbb{E}_{\vx\sim\mathbb{P}}[\nabla_{\vy}\widetilde{{\Phi}}( \vy^{(k)},\mu_k;\vx)]-\frac{1}{M_k} \sum_{j=1}^{M_k}\nabla_{\vy}\widetilde{{\Phi}} ( \vy^{(k)},\mu_k;\vx_{k_j})
		\right\|^2\right]  \\
		= &  {2\widehat{\epsilon}_k^2 + \frac{2}{M_k}\mathbb{E}\left[\left\| \mathbb{E}_{\vx\sim\mathbb{P}}[\nabla_{\vy}\widetilde{{\Phi}}( \vy^{(k)},\mu_k;\vx)]-\nabla_{\vy}\widetilde{{\Phi}} ( \vy^{(k)},\mu_k;\vx_{k_1})
			\right\|^2\right]} \leq    2\widehat{\epsilon}_k^2 + \frac{8}{M_k}l_{\Psi}^2\leq 4\widehat{\epsilon}_k^2,
		\label{eq:boundg2}
	\end{align}
	where the first inequality uses triangle inequality, the second one yields from~\eqref{eq:gradinetbias2},  the  second equality holds because of 
	the i.i.d. sampling, the third inequality uses 
	$\|\nabla_{\vy} \widetilde{\Phi}(\vy,\mu;\vx)\|\leq l_{\Psi}$ for all $\vx\in\cX$ from Lemma~\ref{lem:smoothphi}(b), and the last one follows by $M_k\ge \lceil 4l_{\Psi}^2 \widehat{\epsilon}_k^{-2}\rceil$.
	
	(b)  
	We deduce that
	\begin{align*}
		&\mathbb{E}\left[\left\|\mathbb{E}_{\vx\sim\mathbb{P}}[\nabla_{\vy}\widetilde{{\Phi}}( \vy^{(k+1)},\mu_k;\vx)]+  \vgamma_{\vy}^{{(k+1)}}\right\|^2\right] \\
		\leq & 2\mathbb{E}\left[\left\|\mathbb{E}_{\vx\sim\mathbb{P}}[\nabla_{\vy}\widetilde{{\Phi}}( \vy^{(k)},\mu_k;\vx)]+  \vgamma_{\vy}^{{(k+1)}}\right\|^2\right]  \\ & +2 \mathbb{E}\left[\left\|\mathbb{E}_{\vx\sim\mathbb{P}}[\nabla_{\vy}\widetilde{{\Phi}}( \vy^{(k)},\mu_k;\vx)]- \mathbb{E}_{\vx\sim\mathbb{P}}[\nabla_{\vy}\widetilde{{\Phi}}( \vy^{(k+1)},\mu_k;\vx)]\right\|^2\right] 
		\\\leq & 2\mathbb{E}\left[\left\|\mathbb{E}_{\vx\sim\mathbb{P}}[\nabla_{\vy}\widetilde{{\Phi}}( \vy^{(k)},\mu_k;\vx)]+  \vgamma_{\vy}^{{(k+1)}}\right\|^2\right]  +2\mathbb{E}\left[(L^{(k)})^2\| \vy^{(k+1)}- \vy^{(k)}\|^2\right]
		\\ &
		\stackrel{\eqref{eq:KKT2}}{=}
		2\mathbb{E} \left[\left\|-L^{(k)}( \vy^{(k+1)}- \vy^{(k)}) + \left(\mathbb{E}_{\vx\sim\mathbb{P}} [\nabla_{\vy}\widetilde{{\Phi}}( \vy^{(k)},\mu_k;\vx)]- \mathcal{G}(\vy^{(k)},\mu_{k})\right)\right\|^2\right]\\ &+2\mathbb{E}\left[(L^{(k)})^2\| \vy^{(k+1)}- \vy^{(k)}\|^2\right]\notag
		\\ \leq & 
		\frac{5(L^{(k)})^2}{2} \mathbb{E}\left[ \| \vy^{(k+1)}- \vy^{(k)}\|^2\right]
		+10 \mathbb{E}\left[\left\|\mathbb{E}_{\vx\sim\mathbb{P}} [\nabla_{\vy}\widetilde{{\Phi}}( \vy^{(k)},\mu_k;\vx)]- \mathcal{G}(\vy^{(k)},\mu_{k})\right\|^2\right] 
		\\ & \quad\quad\quad\quad\quad +2\mathbb{E}\left[(L^{(k)})^2\| \vy^{(k+1)}- \vy^{(k)}\|^2\right]\notag\\
		=&  
		\frac{9 (L^{(k)})^2}{2} \mathbb{E}\left[ \| \vy^{(k+1)}- \vy^{(k)}\|^2\right] 
		+10 \mathbb{E}\left[\left\|\mathbb{E}_{\vx\sim\mathbb{P}}[\nabla_{\vy}\widetilde{{\Phi}}( \vy^{(k)},\mu_k;\vx)]- \mathcal{G}(\vy^{(k)},\mu_{k})\right\|^2\right]
		\\ \le& 18 L^{(k)} \left(\mathbb{E} \left[\widetilde{g}( \vy^{(k)},\mu_k)- \widetilde{g}( \vy^{(k+1)},\mu_k)\right] +  \frac{4}{L^{(k)}}\widehat{\epsilon}_k^2 \right) +40 \widehat{\epsilon}_k^2
		\\ =& 18 L^{(k)} \mathbb{E} \left[\widetilde{g}( \vy^{(k)},\mu_k)- \widetilde{g}( \vy^{(k+1)},\mu_k)\right] +  {112}{}\widehat{\epsilon}_k^2,
	\end{align*}
	where we use triangle inequality in the first inequality, Lemma~\ref{lem:smoothphi}(c) in the second one, and  Young's inequality in the third one. To obtain the last inequality, we have used $$\mathbb{E}[\|\mathbb{E}_{\vx\sim\mathbb{P}}[\nabla_{\vy}\widetilde{{\Phi}}( \vy^{(k)},\mu_k;\vx)]-\mathcal{G}(\vy^{(k)},\mu_{k})\|^2] \le 4 \widehat{\epsilon}_k^2$$ from~\eqref{eq:boundg2}, and
	$\frac{L^{(k)}}{4} \mathbb{E} \left[\| \vy^{(k+1)}- \vy^{(k)}\|^2\right]\leq -\mathbb{E}\left[\widetilde{g}( \vy^{(k+1)},\mu_k)- \widetilde{g}( \vy^{(k)},\mu_k)\right]   + \frac{4}{ L^{(k)}}\widehat{\epsilon}_k^2$
	from
	\eqref{eq:funcgap2}.   
	The proof is then completed.
	\endproof

	\subsection{Proof of Theorem~\ref{thm:scaled2}}\label{sec:scaled2}
	\proof 
	Summing up~\eqref{eq:kktregu2} for all $k=k_1, k_1+1,\ldots,K-1$, and noticing $L^{(k)}=C_2/\mu_k$, we have 
	\begin{align}
		\notag
		&\sum_{k=k_1}^{K-1} \frac{\mu_k}{C_2}\mathbb{E}\left[\left(\dist\left(\vzero, \partial\widetilde{g}( \vy^{(\tau+1)},\mu_\tau)\right)\right)^2\right]	=		\sum_{k=k_1}^{K-1} \frac{\mu_k}{C_2}\mathbb{E}\left[\left(\dist\left(\vzero, \partial\widetilde{g}( \vy^{(k+1)},\mu_k)\right)\right)^2\right]  \\ \leq & 18 \sum_{k=k_1}^{K-1}  \mathbb{E} \left[\widetilde{g}( \vy^{(k)},\mu_k)- \widetilde{g}( \vy^{(k+1)},\mu_k)\right] +  \sum_{k=k_1}^{K-1} \frac{112 \mu_k}{ C_2} \widehat{\epsilon}_k^2.
		\label{eq:sumgradient}
	\end{align}
	For the first term in the right hand side of~\eqref{eq:sumgradient}, we have 
	\begin{align}
		\notag
		& \sum_{k=k_1}^{K-1} \mathbb{E} \left[\widetilde{g}( \vy^{(k)},\mu_k)- \widetilde{g}( \vy^{(k+1)},\mu_k)\right]  \\  
		\notag= &   \mathbb{E} \left[\widetilde{g}( \vy^{(k_1)},\mu_{k_1})- \widetilde{g}( \vy^{(K)},\mu_{K-1}) \right] +\sum_{k=k_1}^{K-2} \mathbb{E} \left[\widetilde{g}( \vy^{(k+1)},\mu_{k+1})- \widetilde{g}( \vy^{(k+1)},\mu_{k})\right]   \\
		\leq &   \mathbb{E} \left[\widetilde{g}( \vy^{(k_1)},\mu_{k_1})- \widetilde{g}( \vy^{(K)},\mu_{K-1})\right] +\sum_{k=k_1}^{K-2} \omega\left(\mu_{k}- \mu_{k+1}\right), 
		\label{eq:difffunc}
	\end{align}
	where {the first inequality follows from Lemmas~\ref{lem:distancemufinite}--\ref{lem:distancemu}}.
	Substituting~\eqref{eq:difffunc} into~\eqref{eq:sumgradient} and multiplying each side by $\frac{C_2}{\sum_{k=k_1}^{K-1}{\mu_k}}$ yields 
	\eqref{eq:expectationsta}. 
	\endproof
	
	\subsection{Proof  of Corollary~\ref{cor:main1}} \label{sec:cor}

	\proof
 \textnormal{(a)}  We look at the three terms on the
	right-hand side of~\eqref{eq:expectationsta}. Since \(\mu_k=\epsilon\) and
	\(k_1=0\), we have
	$
	\sum_{k=k_1}^{K-1}\mu_k=K\epsilon .
	$
	The first term satisfies
	\[
	\frac{
		18C_2
		\mathbb E
		\left[
		\widetilde g(\vy^{(0)},\epsilon)
		-
		\widetilde g(\vy^{(K)},\epsilon)
		\right]
	}{
		K\epsilon
	}
	\le
	\frac{
		18C_2
		\left(
		\widetilde g(\vy^{(0)},\epsilon)
		-
		\min_{\vy}\widetilde g(\vy,\epsilon)
		\right)
	}{
		K\epsilon
	}
	\le
	\frac{\epsilon^2}{2},
	\]
	where the last inequality follows from the definition of \(K\). The second term
	is zero because \(\mu_k=\epsilon\) for all \(k\in[K]\). The third term is
	$
	\frac{
		112\sum_{k=0}^{K-1}\epsilon\widehat\epsilon_k^2
	}{
		K\epsilon
	}
	=
	112\left(\frac{\epsilon}{16}\right)^2
	\le
	\frac{\epsilon^2}{2}.
	$
	Therefore, the right-hand side of~\eqref{eq:expectationsta} is at most
	\(\epsilon^2\). Hence
	$
	\mathbb E
	\left[
	\left(
	\operatorname{dist}
	\left(
	0,\partial\widetilde g(\vy^{(\tau+1)},\mu_\tau)
	\right)
	\right)^2
	\right]
	\le
	\epsilon^2 .
	$
	Since \(\tau\) is sampled from \(\{0,1,\ldots,K-1\}\), there exists some
	\(0\le k<K\) satisfying the same bound.  
	
	  \textnormal{(b)}  
	Since \(\mu_k=\mu_g^2\) and \(k_1=0\), we have
	$
	\sum_{k=0}^{K-1}\mu_k=K\mu_g^2 .
	$
	The first term on the right-hand side of~\eqref{eq:expectationsta} satisfies
	\[
	\frac{
		18C_2
		\mathbb E
		\left[
		\widetilde g(\vy^{(0)},\mu_g^2)
		-
		\widetilde g(\vy^{(K)},\mu_g^2)
		\right]
	}{
		K\mu_g^2
	}
	\le
	\frac{
		18C_2
		\left(
		\widetilde g(\vy^{(0)},\mu_g^2)
		-
		\min_{\vy}\widetilde g(\vy,\mu_g^2)
		\right)
	}{
		K\mu_g^2
	}
	\le
	\frac{\epsilon_g^2}{2}.
	\]
	The second term is again zero. The
	third term is
	$
	\frac{
		112\sum_{k=0}^{K-1}\mu_g^2\widehat\epsilon_k^2
	}{
		K\mu_g^2
	} 
	\le
	\frac{\epsilon_g^2}{2}.
	$
	Therefore, Theorem~\ref{thm:scaled2} yields
	$
	\mathbb E
	\left[
	\left(
	\operatorname{dist}
	\left(
	0,\partial\widetilde g(\vy^{(\tau+1)},\mu_g^2)
	\right)
	\right)^2
	\right]
	\le\epsilon_g^2 .
	$
	
	By the proof of Lemma~\ref{lem:smoothed-to-goldstein}, and \(\Delta_{\mu_g^2}\le\omega(\mu_g^2) \),
	we obtain 
	\[
	\begin{aligned}
		\left(
		\mathbb E
		\left[
		\operatorname{dist}
		\left(
		0,\partial^{\mu_g}g(\vy^{(\tau+1)})
		\right)^2
		\right]
		\right)^{1/2}
		 & \le
		\left(
		2\mathbb E
		\left[
		\left(
		\operatorname{dist}
		\left(
		0,\partial\widetilde g(\vy^{(\tau+1)},\mu_g^2)
		\right)
		\right)^2
		\right]
		\right)^{1/2}
		+
		2\sqrt{2L_\Psi\Delta_{\mu_g^2}}
		\\
		& \le
		\sqrt{2}\epsilon_g +  2\sqrt{2L_\Psi\omega(\mu_g^2) }.
	\end{aligned}
	\]
	Thus the randomly sampled output \(\vy^{(\tau+1)}\) is a
	\(( \sqrt{\frac{2\omega(\mu_g^2)}{L_\Psi}},\sqrt{2}\epsilon_g +  2\sqrt{2L_\Psi\omega(\mu_g^2)})\)-Goldstein stationary point in expectation. Hence at
	least one iterate \(\vy^{(k+1)}\), \(0\le k<K\), satisfies the same bound. This
	proves part~\textnormal{(b)}.
	\endproof

\end{document}